\theoremstyle{plain}
\newtheorem{thm}{Theorem}[section]
\newtheorem{definition}{Definition}
\newtheorem{lemma}[thm]{Lemma}
\newtheorem{corollary}[thm]{Corollary}
\newtheorem{remark}[thm]{Remark}
\numberwithin{equation}{section}
\theoremstyle{remark}
\def\Xint#1{\mathchoice
  {\XXint\displaystyle\textstyle{#1}}%
  {\XXint\textstyle\scriptstyle{#1}}%
  {\XXint\scriptstyle\scriptscriptstyle{#1}}%
  {\XXint\scriptscriptstyle\scriptscriptstyle{#1}}%
  \!\int}
\def\XXint#1#2#3{{\setbox0=\hbox{$#1{#2#3}{\int}$}
  \vcenter{\hbox{$#2#3$}}\kern-.5\wd0}}
\def\dashint{\Xint-}
\begin{document}
\allowdisplaybreaks
\pagestyle{myheadings}\markboth{$~$ \hfill {\rm Q. Xu,} \hfill $~$} {$~$ \hfill {\rm  } \hfill$~$}
\author{Li Wang
\quad Qiang Xu
\thanks{Corresponding author}
\thanks{Email: xuqiang09@lzu.edu.cn.}
\quad Peihao Zhao
\\
School of Mathematics and Statistics, Lanzhou University, \\
Gansu, 730000, PR China. \vspace{0.5cm}
}

%


\title{\textbf{Quantitative Estimates on Periodic Homogenization
of Nonlinear Elliptic Operators} }
\maketitle
\begin{abstract}
In this paper, we are interested in the periodic homogenization of quasilinear elliptic equations.
We obtain error estimates $O(\varepsilon^{1/2})$ for a $C^{1,1}$ domain,
and $O(\varepsilon^\sigma)$ for a Lipschitz domain,
in which $\sigma\in(0,1/2)$ is
close to zero. Based upon the convergence rates,
an interior Lipschitz estimate, as well as
a boundary H\"older estimate can be developed at large scales without any smoothness
assumption, and these
will implies reverse H\"older estimates established for a $C^1$ domain.
By a real method developed by Z.Shen \cite{S3},
we consequently derive a global $W^{1,p}$ estimate for $2\leq p<\infty$.
This work may be regarded as an extension of \cite{MAFHL,S5} to
a nonlinear operator, and our results may be extended to
the related Neumann boundary problems without any real difficulty.
\\
\textbf{Key words.} Homogenization; nonlinear operator;
Lipschitz estimate;  $W^{1,p}$ estimate.
\end{abstract}

\section{Instruction and main results}
The aim of the present paper is to study the error estimates
and $W^{1,p}(\Omega)$ estimates with $2\leq p\leq \infty$ for quasilinear elliptic
equations, arising in the periodic homogenization theory.
More precisely, let $\Omega\subset\mathbb{R}^d$ be a bounded
domain, and consider the following elliptic equations in divergence form
depending on a parameter $\varepsilon > 0$,
\begin{eqnarray}\label{pde:1.1}
\left\{\begin{aligned}
\mathcal{L}_{\varepsilon} u_\varepsilon \equiv
-\text{div}A(x/\varepsilon,\nabla u_\varepsilon) &= F &\qquad&\text{in}~~\Omega, \\
 u_\varepsilon &= g &\qquad& \text{on}~\partial\Omega.
\end{aligned}\right.
\end{eqnarray}

Given three constants $\mu_0,\mu_1,\mu_2>0$, let us fix a function
$A:\mathbb{R}^d\times\mathbb{R}^d\to\mathbb{R}^d$ which satisfies the following conditions.
\begin{itemize}
\item For every $z\in\mathbb{R}^d$, $A(\cdot,z)$ is 1-periodic and Lebesgue measurable, and
\begin{equation}\label{a:1}
  A(y,0) = 0  \quad\text{for~a.e.}~y\in\mathbb{R}^d.
\end{equation}
\item There hold the coerciveness and growth conditions
\begin{equation}\label{a:2}
\begin{aligned}
&\big<A(y,z)-A(y,z^\prime),z-z^\prime\big>  \geq \mu_0|z-z^\prime|^2, \\
&~|A(y,z)-A(y,z^\prime)| \leq \mu_2|z-z^\prime|,
\end{aligned}
\end{equation}
and the smoothness condition
\begin{equation}\label{a:3}
|A(y,z)-A(y^\prime,z)|\leq \mu_1|y-y^\prime|^\tau
\end{equation}
for all $y,y^\prime\in\mathbb{R}^d$ and $z,z^\prime\in\mathbb{R}^d$, where $\tau\in(0,1]$.
\end{itemize}

The following qualitative homogenization results are well known (see for example \cite{A,MD}).
Let $F\in H^{-1}(\Omega)$ and let
$u_\varepsilon$ be the weak solution to problem $\eqref{pde:1.1}$.
Then we have $u_\varepsilon \rightharpoonup u_0$ weakly in $H^1(\Omega)$, and
$A(x/\varepsilon,\nabla u_\varepsilon)\rightharpoonup\widehat{A}(\nabla u_0)$ weakly in $L^2(\Omega;\mathbb{R}^d)$,
where $u_0$ is the solution to the effective (homogenized) equation
\begin{equation}\label{pde:1.3}
\left\{\begin{aligned}
\mathcal{L}_{0} u_0 \equiv
-\text{div}\widehat{A}(\nabla u_0) &= F &\qquad&\text{in}~~\Omega, \\
 u_0 &= g &\qquad& \text{on}~\partial\Omega.
\end{aligned}\right.
\end{equation}
The function $\widehat{A}:\mathbb{R}^d\to\mathbb{R}^d$ is defined for every $\xi\in\mathbb{R}^d$ by
\begin{equation}\label{eq:1.1}
\widehat{A}(\xi) = \int_{Y=(0,1]^d} A(y,\xi+\nabla_y N(y,\xi))dy,
\end{equation}
where $N(y,\xi)$ is the so-called corrector, satisfying the following cell problem
\begin{equation}\label{pde:1.2}
\left\{\begin{aligned}
&\text{div} A^\xi(y,\nabla_y N(y,\xi)) = 0 \quad\text{in}~ Y,\\
& N(\cdot,\xi)\in H^1_{per}(Y), \quad \dashint_{Y}N(\cdot,\xi) = 0,
\end{aligned}\right.
\end{equation}
where $A^\xi(y,z) = A(y,\xi+z)$ for any $z\in\mathbb{R}^d$.
The notation $\dashint_\Omega = \frac{1}{|\Omega|}\int_\Omega$,
where $|\Omega|$ represents
the volume of $\Omega$. Let $r_0$ denote the diameter of $\Omega$ throughout the paper.

We now state the main results of the paper.
\begin{thm}[convergence rates]\label{thm:1.1}
Suppose that $\mathcal{L}_\varepsilon$ satisfies the conditions
$\eqref{a:1}$ and $\eqref{a:2}$. Let
$u_\varepsilon, u_0\in H^1(\Omega)$ be the weak solution of
$\eqref{pde:1.1}$ and $\eqref{pde:1.3}$, respectively.
\begin{itemize}
  \item[\emph{(1).}] Let $\Omega\subset\mathbb{R}^d$
  be a bounded $C^{1,1}$ domain with $r_0=\emph{diam}(\Omega)$,
  and $F\in L^2(\Omega)$ and $g\in H^{3/2}(\partial\Omega)$, then we have
\begin{equation}\label{pri:1.2}
\|u_\varepsilon - u_0\|_{L^2(\Omega)}
\leq C\varepsilon^{1/2}\Big\{\|F\|_{L^2(\Omega)}
+\|g\|_{H^{3/2}(\partial\Omega)}
\Big\},
\end{equation}
in which $C$ depends on $\mu_0,\mu_2,d,r_0$ and the character of $\Omega$.
  \item[\emph{(2).}] If $\Omega$ is a bounded Lipschitz domain, then there exists
  $p>2$, such that
\begin{equation}\label{pri:1.3}
\|u_\varepsilon - u_0\|_{L^2(\Omega)}
\leq C_p\varepsilon^{\sigma}\Big\{\|F\|_{L^p(\Omega)}
+\|g\|_{W^{1-1/p,p}(\partial\Omega)}
\Big\},
\end{equation}
where $\sigma = 1/2-1/p$, and $C_p$
depends on $\mu_0,\mu_2,d,p,r_0$ and the character of $\Omega$.
\end{itemize}
\end{thm}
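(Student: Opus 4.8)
I would follow the now-standard scheme for suboptimal homogenization rates — a first-order two-scale approximation, a smoothing (Steklov) operator, and the flux corrector — with uniform ellipticity everywhere replaced by the coerciveness in \eqref{a:2} and the constant matrix corrector replaced by the nonlinear family $N(\cdot,\xi)$. Two preliminary ingredients are needed. First, regularity of the homogenized solution: $\widehat A$ inherits from \eqref{a:2} the monotonicity $\langle\widehat A(\xi)-\widehat A(\xi'),\xi-\xi'\rangle\ge\mu_0|\xi-\xi'|^{2}$ and the Lipschitz bound $|\widehat A(\xi)-\widehat A(\xi')|\le C|\xi-\xi'|$, so on a $C^{1,1}$ domain the difference‑quotient method applied to \eqref{pde:1.3} gives $u_0\in H^{2}(\Omega)$ with $\|u_0\|_{H^{2}(\Omega)}\le C\{\|F\|_{L^{2}(\Omega)}+\|g\|_{H^{3/2}(\partial\Omega)}\}$, while on a Lipschitz domain a Gehring/Meyers argument gives $u_0,u_\varepsilon\in W^{1,p}(\Omega)$ for some $p>2$ uniformly in $\varepsilon$, and interior difference quotients give $u_0\in H^{2}_{\mathrm{loc}}(\Omega)$ with the weighted bound $\|\nabla^{2}u_0\|_{L^{2}(\Omega_\delta)}\le C\delta^{-(1-\sigma)}\|u_0\|_{W^{1,p}(\Omega)}$, where $\Omega_\delta=\{x\in\Omega:\mathrm{dist}(x,\partial\Omega)>\delta\}$ and $\sigma=1/2-1/p$. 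Second, corrector estimates: testing \eqref{pde:1.2} with $N(\cdot,\xi)$ and using \eqref{a:1}--\eqref{a:2} gives $\|N(\cdot,\xi)\|_{H^{1}(Y)}\le C|\xi|$, and subtracting the cell problems for $\xi$ and $\xi'$ gives the Lipschitz dependence $\|N(\cdot,\xi)-N(\cdot,\xi')\|_{H^{1}(Y)}\le C|\xi-\xi'|$; the flux $B(y,\xi)=A(y,\xi+\nabla_yN(y,\xi))-\widehat A(\xi)$ is mean‑zero and divergence‑free in $y$, hence $B_j=\partial_{y_i}\Phi_{ij}$ with $\Phi$ skew‑symmetric in $(i,j)$ and satisfying the same $H^{1}$ and Lipschitz‑in‑$\xi$ bounds. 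I also use a Steklov‑type composition lemma: for $\psi(y,\cdot)$ $1$-periodic and Lipschitz with $\psi(\cdot,0)=0$, $\|\psi(x/\varepsilon,S_\varepsilon f)\,S_\varepsilon h\|_{L^{2}(\Omega)}\le C\|f\|_{L^{2}}\|h\|_{L^{2}}$ and $\|\psi(x/\varepsilon,S_\varepsilon f)\|_{L^{2}(E)}\le C\|f\|_{L^{2}(E^{*})}$ for a slight enlargement $E^{*}$ of $E$.

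For the error estimate, let $\eta_\varepsilon$ be a cutoff with $\eta_\varepsilon=1$ on $\Omega_{2\varepsilon}$, $\eta_\varepsilon=0$ on $\Omega\setminus\Omega_\varepsilon$, $|\nabla\eta_\varepsilon|\le C/\varepsilon$, put $\zeta=S_\varepsilon(\nabla u_0)$, $v_\varepsilon=u_0+\varepsilon\,\eta_\varepsilon\,N(x/\varepsilon,\zeta)$ and $w_\varepsilon=u_\varepsilon-v_\varepsilon\in H^{1}_0(\Omega)$. Testing \eqref{pde:1.1} and \eqref{pde:1.3} against $w_\varepsilon$ and using the coerciveness in \eqref{a:2},
\[
\mu_0\|\nabla w_\varepsilon\|_{L^{2}(\Omega)}^{2}\le\int_\Omega\big\langle\widehat A(\nabla u_0)-A(x/\varepsilon,\nabla v_\varepsilon),\,\nabla w_\varepsilon\big\rangle\,dx .
\]
Splitting the integrand as $\big[\widehat A(\nabla u_0)-\widehat A(\zeta)\big]-B(x/\varepsilon,\zeta)+\big[A(x/\varepsilon,\zeta+\nabla_yN(x/\varepsilon,\zeta))-A(x/\varepsilon,\nabla v_\varepsilon)\big]$: the first bracket is $O(|\nabla u_0-\zeta|)$ by the Lipschitz bound on $\widehat A$; the third, via the growth bound in \eqref{a:2} and the chain rule $\nabla v_\varepsilon=\nabla u_0+\eta_\varepsilon\nabla_yN(x/\varepsilon,\zeta)+\varepsilon N(x/\varepsilon,\zeta)\nabla\eta_\varepsilon+\varepsilon\eta_\varepsilon(\partial_\xi N)(x/\varepsilon,\zeta)\nabla\zeta$, is controlled by $|\nabla u_0-\zeta|$, by boundary‑layer terms carrying $(1-\eta_\varepsilon)$ or $\nabla\eta_\varepsilon$ and hence supported in $\Omega\setminus\Omega_{2\varepsilon}$, and by $\varepsilon\,|(\partial_\xi N)(x/\varepsilon,\zeta)\,\nabla\zeta|$; the middle term is treated by writing $B_j(x/\varepsilon,\zeta)=\varepsilon\,\partial_{x_i}[\Phi_{ij}(x/\varepsilon,\zeta)]-\varepsilon(\partial_\xi\Phi_{ij})(x/\varepsilon,\zeta)\partial_{x_i}\zeta$ and integrating by parts, where the skew‑symmetry of $\Phi$ (together with a density argument, since $w_\varepsilon$ is only in $H^{1}_0$) annihilates the contribution of the first term entirely, leaving the $O(\varepsilon)$ term $\varepsilon(\partial_\xi\Phi)(x/\varepsilon,\zeta)\nabla\zeta$. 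Using the composition lemma, and observing that $\nabla\eta_\varepsilon$, $\zeta$ and $\nabla\zeta$ enter only near $\mathrm{supp}\,\eta_\varepsilon\subset\overline{\Omega_\varepsilon}$, where $\nabla u_0$ is locally $H^{1}$ so that $\nabla\zeta=S_\varepsilon(\nabla^{2}u_0)$ produces no factor $\varepsilon^{-1}$, one arrives after absorbing $\|\nabla w_\varepsilon\|_{L^{2}}$ at
\[
\|\nabla w_\varepsilon\|_{L^{2}(\Omega)}\le C\Big\{\|\nabla u_0-S_\varepsilon(\nabla u_0)\|_{L^{2}(\Omega)}+\|\nabla u_0\|_{L^{2}(\Omega\setminus\Omega_{2\varepsilon})}+\varepsilon\,\|\nabla^{2}u_0\|_{L^{2}(\Omega_{\varepsilon})}+\varepsilon\,\|\nabla u_0\|_{L^{2}(\Omega)}\Big\}.
\]

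It remains to read off the rate. On a $C^{1,1}$ domain $u_0\in H^{2}(\Omega)$, so the first, third and fourth terms are $O(\varepsilon\|u_0\|_{H^{2}})$, while the boundary‑layer term satisfies $\|\nabla u_0\|_{L^{2}(\Omega\setminus\Omega_{2\varepsilon})}\le C\varepsilon^{1/2}\|u_0\|_{H^{2}}$ by a trace/interpolation estimate in the $2\varepsilon$-collar — this is the bottleneck — hence $\|\nabla w_\varepsilon\|_{L^{2}}\le C\varepsilon^{1/2}\|u_0\|_{H^{2}}$. On a Lipschitz domain, by H\"older's inequality $\|\nabla u_0\|_{L^{2}(\Omega\setminus\Omega_{2\varepsilon})}\le C|\Omega\setminus\Omega_{2\varepsilon}|^{1/2-1/p}\|\nabla u_0\|_{L^{p}(\Omega)}\le C\varepsilon^{\sigma}\|\nabla u_0\|_{L^{p}(\Omega)}$, and the remaining terms, estimated via the weighted bound with $\delta\sim\varepsilon$ so that $\varepsilon\,\|\nabla^{2}u_0\|_{L^{2}(\Omega_\varepsilon)}\le C\varepsilon\cdot\varepsilon^{-(1-\sigma)}\|u_0\|_{W^{1,p}}=C\varepsilon^{\sigma}\|u_0\|_{W^{1,p}}$, are also $O(\varepsilon^{\sigma}\|u_0\|_{W^{1,p}})$, hence $\|\nabla w_\varepsilon\|_{L^{2}}\le C\varepsilon^{\sigma}\|u_0\|_{W^{1,p}}$. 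Finally, by the Poincar\'e inequality and the composition lemma $\|\eta_\varepsilon N(x/\varepsilon,S_\varepsilon\nabla u_0)\|_{L^{2}(\Omega)}\le C\|\nabla u_0\|_{L^{2}(\Omega)}$,
\[
\|u_\varepsilon-u_0\|_{L^{2}(\Omega)}\le\|w_\varepsilon\|_{L^{2}(\Omega)}+\varepsilon\|\eta_\varepsilon N(x/\varepsilon,S_\varepsilon\nabla u_0)\|_{L^{2}(\Omega)}\le C\|\nabla w_\varepsilon\|_{L^{2}(\Omega)}+C\varepsilon\|\nabla u_0\|_{L^{2}(\Omega)},
\]
and the a priori bounds for $u_0$ from the first paragraph turn the right‑hand side into $C\varepsilon^{1/2}\{\|F\|_{L^{2}}+\|g\|_{H^{3/2}(\partial\Omega)}\}$, respectively $C_p\varepsilon^{\sigma}\{\|F\|_{L^{p}}+\|g\|_{W^{1-1/p,p}(\partial\Omega)}\}$, which are \eqref{pri:1.2} and \eqref{pri:1.3}.

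The main obstacle, compared with the linear case, is the dependence of the corrector on the parameter $\xi$. Since Theorem \ref{thm:1.1} assumes only \eqref{a:1} and \eqref{a:2} — no continuity of $A$ in $y$ — the correctors $N(\cdot,\xi)$ and the flux correctors $\Phi(\cdot,\xi)$ are merely $H^{1}$ in $y$ and their $\xi$-derivatives are only $L^{2}$ (not $L^{\infty}$) in $y$; the real work is therefore (i) to prove the Lipschitz dependence on $\xi$ from the monotonicity and growth bounds in \eqref{a:2}, and (ii) to set up and prove a composition/Steklov estimate robust enough that the chain‑rule remainders $\varepsilon(\partial_\xi N)(x/\varepsilon,S_\varepsilon\nabla u_0)\nabla S_\varepsilon(\nabla u_0)$ and $\varepsilon(\partial_\xi\Phi)(x/\varepsilon,S_\varepsilon\nabla u_0)\nabla S_\varepsilon(\nabla u_0)$ are genuinely of lower order — which forces one to use $\nabla S_\varepsilon(\nabla u_0)=S_\varepsilon(\nabla^{2}u_0)$ on the interior support of $\eta_\varepsilon$, hence to combine the argument with interior $H^{2}$ regularity. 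A secondary point, specific to part (2), is the weighted interior $H^{2}$ estimate for the homogenized monotone operator on a Lipschitz domain, which is precisely what converts the loss of boundary smoothness into the exponent $\sigma=1/2-1/p$; granting it, the boundary‑layer estimate itself is a one‑line application of H\"older's inequality.
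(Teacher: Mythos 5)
Your proposal follows the same overall scheme as the paper's Section~3: a first-order two-scale expansion with a smoothing operator and flux corrector, coerciveness from \eqref{a:2} replacing uniform ellipticity, reduction to the layer/co-layer quantities $\|\nabla u_0\|_{L^2(\Omega\setminus\Sigma_{c\varepsilon})}$ and $\varepsilon\|\nabla^2 u_0\|_{L^2(\Sigma_{c\varepsilon})}$, and then $H^2$ theory on a $C^{1,1}$ domain respectively Meyers plus weighted interior $H^2$ on a Lipschitz domain. The only deviation is cosmetic: you apply the cut-off $\eta_\varepsilon$ as a multiplier on the corrector ($v_\varepsilon=u_0+\varepsilon\eta_\varepsilon N(x/\varepsilon,S_\varepsilon\nabla u_0)$), generating an extra boundary-layer term $\varepsilon N\nabla\eta_\varepsilon$, whereas the paper inserts the cut-off inside the smoothing argument, $\varphi=S_\varepsilon(\psi_{4\varepsilon}\nabla u_0)$, so that $N(x/\varepsilon,\varphi)$ already vanishes near $\partial\Omega$ by $N(\cdot,0)=0$; both devices are standard and lead to the same estimates.
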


\begin{thm}[interior Lipschitz estimates at large scales]\label{thm:1.2}
Let $B(0,2)\subset\Omega$.
Suppose that the coefficients of $\mathcal{L}_\varepsilon$ satisfy
$\eqref{a:1}$ and $\eqref{a:2}$.
Let $u_\varepsilon\in H^1(B(0,2);\mathbb{R}^m)$ be a weak solution of
$\mathcal{L}_\varepsilon u_\varepsilon = F$ in $B(0,2)$,
where $F\in L^p(B(0,2))$ with $p>d$.
Then there holds
\begin{equation}\label{pri:1.1}
\begin{aligned}
\Big(\dashint_{B(0,r)} |\nabla u_\varepsilon|^2dx\Big)^{\frac{1}{2}}
\leq C\bigg\{\Big(\dashint_{B(0,1)} |\nabla u_\varepsilon|^2dx\Big)^{\frac{1}{2}}
+ \Big(\dashint_{B(0,1)} |F|^pdx\Big)^{\frac{1}{p}}\bigg\}
\end{aligned}
\end{equation}
for any $\sqrt[3]{\varepsilon}\leq r<(1/4)$,
where $C$ depends only on $\mu_0, \mu_2,p$ and $d$.
\end{thm}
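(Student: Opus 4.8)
The plan is to deduce the large-scale Lipschitz bound \eqref{pri:1.1} from the convergence estimate of Theorem~\ref{thm:1.1} together with the interior $C^{1,\rho}$ regularity of the homogenized operator $\mathcal L_0$, iterated over a geometric sequence of scales. We may assume $\varepsilon\le\varepsilon_1$ for a constant $\varepsilon_1=\varepsilon_1(\mu_0,\mu_2,p,d)$ to be fixed later: since $r<1/4$ forces $\varepsilon<1/64$, in the range $\varepsilon_1\le\varepsilon<1/64$ the crude inequality $\dashint_{B(0,r)}|\nabla u_\varepsilon|^{2}\le r^{-d}\dashint_{B(0,1)}|\nabla u_\varepsilon|^{2}\le\varepsilon_1^{-d/3}\dashint_{B(0,1)}|\nabla u_\varepsilon|^{2}$ already gives \eqref{pri:1.1}. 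For the remaining range, introduce on scales $t\in(0,1/2]$
\[
H(t)=\frac1t\inf_{q\in\mathbb R^{d},\,c\in\mathbb R}\Big(\dashint_{B(0,t)}|u_\varepsilon-q\cdot x-c|^{2}\Big)^{1/2},\qquad h(t)=|q_t^{*}|,\qquad \Lambda(t)=t\Big(\dashint_{B(0,t)}|F|^{p}\Big)^{1/p},
\]
where $(q_t^{*},c_t^{*})$ is an almost optimal pair; since $p>d$ one has $\Lambda(t)\le\Lambda(1)=(\dashint_{B(0,1)}|F|^{p})^{1/p}$ and $\Lambda(\theta t)\le\theta^{1-d/p}\Lambda(t)$. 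A Caccioppoli inequality applied to $u_\varepsilon-q_t^{*}\cdot x$, whose equation has coefficient $A(y,q_t^{*}+z)$ still satisfying \eqref{a:2} (the term $|A(\cdot,q_t^{*})|\le\mu_2|q_t^{*}|$ being absorbed by Young's inequality), yields $(\dashint_{B(0,r)}|\nabla u_\varepsilon|^{2})^{1/2}\le C(H(2r)+h(2r)+\Lambda(2r))$, so it suffices to control $H+h$ on the scales in question; moreover $H(1)+h(1)\le C(\dashint_{B(0,1)}|\nabla u_\varepsilon|^{2})^{1/2}$ by Poincar\'e's inequality and standard estimates on the best affine approximation.

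The first ingredient is an approximation estimate at the scale of a ball: for $\varepsilon\le r$ with $B(0,4r)\subset B(0,1)$, if $w\in H^1(B(0,r))$ solves $\mathcal L_0 w=F$ in $B(0,r)$ with $w=u_\varepsilon$ on $\partial B(0,r)$, then
\[
\frac1r\Big(\dashint_{B(0,r)}|u_\varepsilon-w|^{2}\Big)^{1/2}\le C\Big(\frac{\varepsilon}{r}\Big)^{\gamma}\big(H(4r)+h(4r)+\Lambda(4r)\big)
\]
for some $\gamma=\gamma(\mu_0,\mu_2,d,p)>0$. To obtain this one first upgrades $u_\varepsilon$ to $W^{1,p_{1}}_{\mathrm{loc}}(B(0,1))$ for some $p_{1}>2$ by the interior Meyers (reverse H\"older) estimate, so that for a.e.\ $r$ the trace $u_\varepsilon|_{\partial B(0,r)}$ lies in the appropriate fractional Sobolev space; then one rescales $B(0,r)$ to $B(0,1)$ and invokes Theorem~\ref{thm:1.1}(2) — whose constant is independent of the H\"older data $\mu_1,\tau$ and which, being stated for Lipschitz domains, tolerates boundary values of this limited regularity — converting its right-hand side into $H(4r)+h(4r)+\Lambda(4r)$ via the Caccioppoli and reverse H\"older bounds. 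The second ingredient is that $\widehat A$ inherits the structure \eqref{a:2} with constants depending only on $\mu_0,\mu_2,d$ (test the corrector equations \eqref{pde:1.2} for two slopes against $N(\cdot,\xi)-N(\cdot,\xi')$), whence solutions of $\mathcal L_0 w=F$, $F\in L^{p}$, $p>d$, obey an interior $C^{1,\rho}$ estimate for some $\rho=\rho(\mu_0,\mu_2,d,p)\in(0,1)$; equivalently, for all $\theta\in(0,1/4)$,
\[
\inf_{q,c}\Big(\dashint_{B(0,\theta r)}|w-q\cdot x-c|^{2}\Big)^{1/2}\le C\theta^{1+\rho}\Big[\inf_{q,c}\Big(\dashint_{B(0,r/2)}|w-q\cdot x-c|^{2}\Big)^{1/2}+r\,\Lambda(r/2)\Big].
\]

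Combining the two ingredients in the usual way — bound the excess of $u_\varepsilon$ on $B(0,\theta r)$ by adding and subtracting $w$, apply the excess decay to $w$, and compare $w$ back to $u_\varepsilon$ on $B(0,r/2)$ — then dividing by $\theta r$ and fixing $\theta$ small (depending on $\mu_0,\mu_2,d,p$) so that the factor $C\theta^{\rho}\le1/4$, we arrive at an iteration of the form
\[
H(\theta r)\le\tfrac14H(r)+C\Big(\tfrac{\varepsilon}{r}\Big)^{\gamma}\big(H(4r)+h(4r)+\Lambda(4r)\big)+C\Lambda(r),\qquad |h(\theta r)-h(r)|\le C\big(H(r)+\Lambda(r)\big)+C\Big(\tfrac{\varepsilon}{r}\Big)^{\gamma}\big(H(4r)+h(4r)+\Lambda(4r)\big),
\]
valid for $\varepsilon\le r$ with $B(0,4r)\subset B(0,1)$, together with the mild comparability $\max_{r\le t\le4r}(H(t)+h(t))\le C(H(4r)+h(4r))$. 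On the range $r\ge\varepsilon^{1/3}$ one has $(\varepsilon/r)^{\gamma}\le\varepsilon^{2\gamma/3}$, which is $\le$ any prescribed small number once $\varepsilon\le\varepsilon_1$; since moreover $\Lambda$ decays geometrically, iterating these inequalities from $r=1/4$ down to $r\simeq\varepsilon^{1/3}$ — a routine real-variable iteration in which the increments of $h$ are summed against the geometrically decaying $H$, as in \cite{S3,S5} — yields $\sup_{\varepsilon^{1/3}\le t\le1/2}(H(t)+h(t))\le C(H(1)+h(1)+\Lambda(1))$. Feeding this, together with $H(1)+h(1)\le C(\dashint_{B(0,1)}|\nabla u_\varepsilon|^{2})^{1/2}$ and $\Lambda(1)=(\dashint_{B(0,1)}|F|^{p})^{1/p}$, into the Caccioppoli bound $(\dashint_{B(0,r)}|\nabla u_\varepsilon|^{2})^{1/2}\le C(H(2r)+h(2r)+\Lambda(2r))$ gives exactly \eqref{pri:1.1}.

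The main obstacle is the approximation estimate of the second paragraph: one needs a convergence rate at the scale of a single ball whose constant is independent of the smoothness exponent $\tau$ and whose error is measured only by the flatness and slope of $u_\varepsilon$ on a slightly larger ball. This is why we route it through part~(2) of Theorem~\ref{thm:1.1} (for Lipschitz domains, hence tolerant of the rough boundary values furnished only by the Meyers estimate) rather than part~(1), which would require $H^{3/2}$ traces and thus an interior $H^{2}$ bound for $\mathcal L_\varepsilon$ that is not available uniformly in $\varepsilon$; it is also the reason the statement is restricted to $r\ge\varepsilon^{1/3}$ — any fixed power of $\varepsilon$ strictly larger than $\varepsilon$ would serve — since only there is $(\varepsilon/r)^{\gamma}$ uniformly small enough to be absorbed at every step of the iteration. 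A secondary technical point is the interior $C^{1,\rho}$ estimate for $\mathcal L_0$: because $\widehat A$ is merely monotone and Lipschitz, not $C^{1}$, in $z$, one first notes $w\in H^{2}_{\mathrm{loc}}$ via difference quotients and \eqref{a:2}, and then differentiates the equation to reduce $\partial_k w$ to a linear equation with bounded measurable, uniformly elliptic (non-symmetric) coefficients, to which De Giorgi--Nash estimates apply.
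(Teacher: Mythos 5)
Your overall architecture — an approximating lemma at a single scale, an excess decay estimate for $\mathcal L_0$ obtained by linearizing and applying De Giorgi--Nash to the equation for a difference quotient of $w$, and a real-variable iteration in which the affine slope $h$ is carried along and its increments are summed — is precisely the paper's plan (Lemmas~\ref{lemma:4.1}, \ref{lemma:4.2}, \ref{lemma:4.4}, \ref{lemma:4.3}). Where you genuinely diverge is in the proof of the approximating lemma. The paper proves it by the co-area formula to pick a good radius $r_0\in[r,3r/2]$, then mollifies the boundary trace $u_\varepsilon|_{\partial B(0,r_0)}$ with the operator $K_\delta$ of Definition~\ref{def:2.1} to obtain $(u_\varepsilon)_\delta\in H^{3/2}(\partial B(0,r_0))$ with the quantitative estimates \eqref{pri:2.10}--\eqref{pri:2.13}, and then splits $u_\varepsilon-w$ into three pieces (a comparison of two $\mathcal L_\varepsilon$-solutions with nearby boundary data, the $C^{1,1}$-domain rate of Theorem~\ref{thm:3.1}, and a harmonic remainder controlled by the nontangential maximal function), optimizing in $\delta$ to obtain exponent $1/4$. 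You instead propose Meyers self-improvement of $u_\varepsilon$ to $W^{1,p_1}_{\mathrm{loc}}$, a $W^{1-1/p,p}$ trace at a good radius, and the Lipschitz-domain rate of Theorem~\ref{thm:1.1}(2); this is in fact exactly the route the paper takes for the \emph{boundary} approximating lemma (Lemma~\ref{lemma:5.1}), just moved to the interior, and it is perfectly sound — it yields a smaller but still positive exponent $\sigma=1/2-1/p$, which is all the iteration needs and is consistent with the stated threshold $r\ge\varepsilon^{1/3}$.

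Two caveats you should correct. First, your stated reason for avoiding Theorem~\ref{thm:1.1}(1) — that it ``would require~$H^{3/2}$ traces and thus an interior $H^2$ bound for $\mathcal L_\varepsilon$ not available uniformly in $\varepsilon$'' — is not right: the paper \emph{does} use part~(1) in the interior lemma, and gets around exactly this point not by regularizing $u_\varepsilon$ in the bulk but by mollifying only its boundary trace with $K_\delta$; your alternative is a legitimate choice, not a forced one. Second, you say you ``rescale $B(0,r)$ to $B(0,1)$ and invoke Theorem~\ref{thm:1.1}(2)'', but the paper explicitly cautions (Remark 1.3 and the opening of the proof of Lemma~\ref{lemma:4.1}) that the rescaling argument is not straightforwardly available here; the issue is that the fractional trace norms $W^{1-1/p,p}(\partial B(0,r))$ and $H^{3/2}(\partial B(0,r))$ do not rescale homogeneously, which is why the paper proves the scale-explicit versions (Theorems~\ref{thm:3.1} and \ref{thm:3.2}) on balls with $R$-independent constants rather than invoking the fixed-domain Theorem~\ref{thm:1.1}. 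Your plan works once Theorem~\ref{thm:3.2} is used in place of a naive rescaling, but as written this step is glossed over.
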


\begin{thm}[$W^{1,p}$ estimates]\label{thm:1.3}
Let $\Omega$ be a bounded $C^1$ domain and $2\leq p<\infty$. Suppose that
$\mathcal{L}_\varepsilon$ satisfies $\eqref{a:1}$, $\eqref{a:2}$ and
$\eqref{a:3}$. Let $F\in W^{-1,p^\prime}(\Omega)$ and
$g\in W^{1-1/p,p}(\partial\Omega)$
with $1/p+1/p^\prime = 1$. Then
the weak solution $u_\varepsilon$ of $\eqref{pde:1.1}$ admits the following estimate
\begin{equation}\label{pri:1.4}
\|\nabla u_\varepsilon\|_{L^p(\Omega)}
\leq C\Big\{\|F\|_{W^{-1,p^\prime}(\Omega)}
+\|g\|_{W^{1-1/p,p}(\partial\Omega)}\Big\},
\end{equation}
where $C$ depends on $\mu_0,\mu_1,\mu_2,\tau,d,p,r_0$ and the character of $\Omega$.
\end{thm}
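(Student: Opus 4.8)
The plan is to derive \eqref{pri:1.4} from the real-variable method of Shen \cite{S3}, whose essential input is a \emph{reverse H\"older inequality} for solutions of the homogeneous equation that holds uniformly in $\varepsilon$ on balls of every size. I begin with the harmless reductions. The case $p=2$ is the standard energy estimate obtained from \eqref{a:1}--\eqref{a:2}, so I may assume $2<p<\infty$. Subtracting from $u_\varepsilon$ a $W^{1,p}(\Omega)$-extension of the boundary datum (of norm $\lesssim\|g\|_{W^{1-1/p,p}(\partial\Omega)}$) reduces matters to $g=0$, and writing $F$ in divergence form recasts \eqref{pde:1.1} as $\mathcal L_\varepsilon u_\varepsilon=\operatorname{div}\mathbf f$ with $\mathbf f$ controlled by $\|F\|_{W^{-1,p'}(\Omega)}$; by density it suffices to prove the corresponding a priori bound. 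Everything then rests on the following claim: if $B=B(x_0,r)$ with $0<r<c\,r_0$ and either $2B\subset\Omega$ or $x_0\in\partial\Omega$, and $w\in H^1(2B\cap\Omega)$ solves $\mathcal L_\varepsilon w=0$ in $2B\cap\Omega$ with $w=0$ on $2B\cap\partial\Omega$, then for every $q\in[2,\infty)$
\[
\Big(\dashint_{B\cap\Omega}|\nabla w|^{q}\,dx\Big)^{1/q}\le C_q\Big(\dashint_{2B\cap\Omega}|\nabla w|^{2}\,dx\Big)^{1/2},
\]
with $C_q$ independent of $\varepsilon$, $r$ and $x_0$.

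To establish this reverse H\"older inequality I would argue by scales. For $r$ large compared with $\varepsilon$, the interior large-scale Lipschitz estimate of Theorem~\ref{thm:1.2}, together with its boundary H\"older analogue at large scales in the boundary case, applied on concentric (sub)balls after a suitable rescaling, shows that the quantity $\rho\mapsto\big(\dashint_{B(x,\rho)\cap\Omega}|\nabla w|^2\big)^{1/2}$ is essentially nonincreasing as $\rho$ decreases down to the threshold provided by those estimates, and iterating the rescaling lowers this threshold towards $\varepsilon$. For $r\lesssim\varepsilon$ I freeze the fast variable: after the dilation $x\mapsto(x-x_0)/\varepsilon$ the operator acquires coefficients $z\mapsto A(x_0/\varepsilon+y,z)$, which by \eqref{a:3} are uniformly H\"older continuous in $y$ with a dimensionless seminorm, so the classical interior (and, after flattening, boundary) $C^{1,\alpha}$ theory for quasilinear divergence-form operators applies and even gives $\|\nabla w\|_{L^\infty(B\cap\Omega)}\lesssim\big(\dashint_{2B\cap\Omega}|\nabla w|^2\big)^{1/2}$. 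Splicing the two regimes---the self-improving Meyers--Gehring estimate, valid at every scale with an $\varepsilon$-free constant, being used to absorb the overlap---delivers the displayed inequality for all $q<\infty$.

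Granted the reverse H\"older inequality, I would feed it into Shen's real-variable theorem \cite{S3}: localizing $\mathbf f$ to $2B\cap\Omega$, splitting the associated solution into the part generated by the localized data and a homogeneous remainder, and applying the above estimate to the latter verifies the hypotheses of that theorem with the exponent arbitrarily large, hence yields
\[
\|\nabla u_\varepsilon\|_{L^p(\Omega)}\le C\big\{\|F\|_{W^{-1,p'}(\Omega)}+\|\nabla u_\varepsilon\|_{L^2(\Omega)}\big\}
\]
for every $2\le p<\infty$; combining with the $L^2$ energy estimate and undoing the reductions gives \eqref{pri:1.4}. I expect the main obstacle to be the uniform-in-$\varepsilon$ reverse H\"older inequality itself---specifically, bridging the band of scales between $\varepsilon$ and the threshold $\sqrt[3]{\varepsilon}$ of Theorem~\ref{thm:1.2}, and running the frozen-coefficient $C^{1,\alpha}$ argument up to the boundary of a merely $C^1$ domain---since the nonlinearity of $A$ and the $\xi$-dependence of the corrector $N(\cdot,\xi)$ rule out layer-potential techniques and force a purely variational, compactness-based treatment throughout.
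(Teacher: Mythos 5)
Your proposal is correct and follows essentially the same path as the paper: a uniform-in-$\varepsilon$ reverse H\"older inequality for homogeneous solutions (obtained by combining the large-scale Lipschitz/H\"older estimates of Theorem~\ref{thm:1.2} and of $\eqref{pri:5.4}$ with classical small-scale $W^{1,p}$/Schauder theory via the smoothness hypothesis $\eqref{a:3}$, together with Meyers' estimate to splice the regimes), fed into Shen's real-variable method of \cite{S3}; the paper carries this out through the interior estimate of Theorem~\ref{thm:4.1}, the boundary estimate of Theorem~\ref{thm:5.3}, and a final covering argument. Your observation about bridging the scale band between $\varepsilon$ and $\sqrt[3]{\varepsilon}$ by iterated rescaling is exactly the point the paper leaves implicit (cf.\ the remark after Theorem~\ref{thm:1.2} that $\eqref{pri:1.1}$ extends to $\varepsilon\le r<1/4$), and your frozen-coefficient Schauder argument at scales $r\lesssim\varepsilon$ plays the role that the appeal to \cite[Lemma~2.5]{BW} plays in the paper.
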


A few remarks are in order.

\begin{remark}
\emph{To see the existence of the solution of
$\eqref{pde:1.2}$, let $\mathcal{L}_1^\xi v = -\text{div}A^\xi(y,\nabla v)$ in $Y$,
and we first observe that $A^\xi(y,z)$ satisfies
the condition $\eqref{a:1}$ and $A^{\xi}(y,-\xi) = 0$ for any $z,\xi\in\mathbb{R}^d$. This together with
the assumption $\eqref{a:2}$ verifies
that $\mathcal{L}_1^\xi: \mathcal{W}_{per}(Y) \to (\mathcal{W}_{per}(Y))^*$ is
a strong monotone, hemicontinuous and coercive operator, in which
$\mathcal{W}_{per}(Y) = \{u\in H_{per}^1(Y): \int_{Y}u(y) dy = 0\}$ and
$(\mathcal{W}_{per}(Y))^*$ denotes its dual space.
Thus it is well known (see for example \cite[Theorem 26.A]{Z}) that $\mathcal{L}_1^\xi v = 0$
has a unique solution $v\in \mathcal{W}_{per}(Y)$ for any $\xi\in\mathbb{R}^d$, denoted by
$N(y,\xi)$ in $\eqref{pde:1.2}$. The definition of the space
$H_{per}^1(Y)$ may be found in \cite{ABJLGP,VSO}.}
\end{remark}

\begin{remark}
\emph{Although the estimate $\eqref{pri:1.2}$ has already been shown by S. Pastukhova\cite{Z},
our arguments do not rely on the related error estimate in the whole space as
a precondition compared to his, and are also valid for deriving the estimate
$\eqref{pri:1.3}$ in terms of a bounded Lipschitz domain.
The key ingredient is to reduce
the corresponding problem to the so-called ``layer'' and ``co-layer''
type estimates
\begin{equation*}
\|\nabla u_0\|_{L^2(\Omega\setminus \Sigma_{4\varepsilon})}
\quad\text{and}\quad
\|\nabla^2 u_0\|_{L^2(\Sigma_{4\varepsilon})},
\end{equation*}
where $\Sigma_{4\varepsilon} =
\big\{x\in\Omega:\text{dist}(x,\partial\Omega>4\varepsilon)\big\}$, and then
we employ $H^2$,
and Meyers estimates (see Theorems $\ref{thm:2.2}$,$\ref{thm:2.3}$) to obtain the stated estimates
$\eqref{pri:1.2}$ and $\eqref{pri:1.3}$, respectively. To begin the proof, we
define the first order corrector as
\begin{equation*}
v_\varepsilon(x) = u_0(x) + \varepsilon N(x/\varepsilon,S_\varepsilon(\psi_{4\varepsilon}\nabla u_0)),
\end{equation*}
where $S_\varepsilon$ is a smoothing operator (see Definition $\ref{def:2.1}$), and
$\psi_{4\varepsilon}$ is a cut-off function (see $\eqref{def:3.1}$).
We mention that the idea is inspired by the so-called shift argument
introduced by V. Zhikov, S. Pastukhova in \cite{ZVVPSE}.
Here, we substitute smoothing operator
$S_\varepsilon$ for the Steklov averaging, which was first suggested by
Z. Shen \cite{S5}. In terms of linear operators,
error estimates have been studied extensively, and
we refer the reader to \cite{GX,KFS2,S5,TS1,X0,X3,ZVVPSE,ZVVPSE1} and their references therein
for more results.}
\end{remark}

\begin{remark}
\emph{If the monotone operator $\mathcal{L}_\varepsilon$ additionally satisfies
the condition $\eqref{a:3}$, then the estimate $\eqref{pri:1.1}$ will imply
the uniform Lipschitz estimate
\begin{equation}\label{pri:1.6}
 \|\nabla u_\varepsilon\|_{L^\infty(B(0,r/2))}
 \leq C\bigg\{\Big(\dashint_{B(0,1)} |\nabla u_\varepsilon|^2dx\Big)^{\frac{1}{2}}
+ \Big(\dashint_{B(0,1)} |F|^pdx\Big)^{\frac{1}{p}}\bigg\}
\end{equation}
for any $0<r<(1/4)$. This type result was first obtained by M. Avellaneda, F. Lin
\cite{MAFHL} for the linear case $A(y,\xi)=A(y)\xi$,
in which a compactness method had been well developed. However,
their method can not be applied to non-periodic setting
or nonlinear operators, directly.
Recently, S. Armstrong and Z. Shen derived the estimate $\eqref{pri:1.1}$ for
almost-periodic homogenization of linear elliptic systems with either Dirichlet
or Neumann boundary conditions, while
S. Armstrong, J. Mourrat, C. Smart \cite{AM,ASC} obtained the estimate
$\eqref{pri:1.1}$ for stochastic homogenization of convex integral functionals.
Their new idea is based upon a convergence rate coupled with the so-called
Campanato iteration. In this sense, either the estimate $\eqref{pri:1.2}$ or
$\eqref{pri:1.3}$ will lead to the stated estimate $\eqref{pri:1.1}$.
Inspired by some techniques in \cite{SZ1}, we plan to use the estimate
$\eqref{pri:1.2}$ here. Since
we can not use the rescaling argument, we have to calculate carefully, and first
obtain a approximating lemma
\begin{equation*}
 \Big(\dashint_{B(0,r)} |u_\varepsilon - w|^2  \Big)^{1/2}
 \leq C\left(\frac{\varepsilon}{r}\right)^{1/4}
 \bigg\{
 \Big(\dashint_{B(0,2r)}|u_\varepsilon|^2 \Big)^{1/2}
 + r^2\Big(\dashint_{B(0,2r)} |F|^2\Big)^{1/2}\bigg\}
\end{equation*}
for $\sqrt[3]{\varepsilon}\leq r<(1/4)$, where $w\in H^1(B(0,r_0))$ satisfies
$\mathcal{L}_0 w = F$ in $B(0,2r)$ with $w=u_\varepsilon$ on $\partial B(0,r_0)$,
and $r<r_0<2r$. Then we use the iteration argument (see Lemma $\ref{lemma:4.3}$) to prove our result,
which was proved by Z. Shen in \cite{S5}, originally shown in \cite{AM,ASC}. To carry out
this program, we define the following quantity
\begin{equation*}
G(r,u_\varepsilon) = \frac{1}{r}\inf_{M\in\mathbb{R}^{d}\atop c\in\mathbb{R}}
\Bigg\{\Big(\dashint_{B(0,r)}|u_\varepsilon-Mx-c|^2dx\Big)^{\frac{1}{2}}
+ r^2\Big(\dashint_{B(0,r)}|F|^p\Big)^{\frac{1}{p}}\Bigg\},
\end{equation*}
in which it is not hard to see that the requirement $p>d$ is natural assumption.
Although the quantity $w-Mx-c$ is not a solution of $\mathcal{L}_0 w = F$ in
$B(0,r_0)$ in general, the key observation is that it verified
the same linearized equation as $w$ did. Thus we can show there exists
$\theta\in(0,1/4)$, depending on $\mu_0,\mu_2,d$, such that
\begin{equation*}
 G(\theta r, w)\leq (1/2)G(r,w)
\end{equation*}
for any $0<r<1$ (see Lemma $\ref{lemma:4.2}$). Then the rest of the proof is
standard. We point out that due to
the above result depending on De Giorgi-Nash-Moser theorem, we can not
extend the estimate $\eqref{pri:1.1}$ to the corresponding systems.
We also mention that
if we use the error estimate $\eqref{pri:2.3}$, the estimate $\eqref{pri:1.1}$
may hold for $\varepsilon\leq r<(1/4)$.
The sharp quantitative estimates received a great amount of interest recently,
and without attempting to
exhaustive, we refer the reader
to \cite{GNF,GX,KFS1,MN,NSX,S1,X0,X4} and references therein for more results.}
\end{remark}

\begin{remark}
\emph{The estimate $\eqref{pri:1.4}$ may be divided into
the corresponding interior and boundary estimates. The first step is to
derive the reverse H\"older estimates for the homogeneous case (see the estimates
\eqref{f:4.10} and $\eqref{pri:5.6}$),
which are based upon classical results
\cite[Theorem 1.4]{BW} in small scales and the Lipschitz estimate $\eqref{pri:1.1}$,
the boundary H\"older estimate $\eqref{pri:5.4}$, at large scales, respectively. Then one may
extend reverse H\"older estimates to the nonhomogeneous cases
(also including nonhomogeneous boundary condition), which requires
a real method developed by Z. Shen \cite{S3}, inspired by \cite{CP}.
We need to mention that the estimate $\eqref{f:4.10}$ for a linear operator
has been shown by L. Caffarelli and I. Peral  in \cite{CP}.
Although the real method has been applied to studying $W^{1,p}$ estimates or
$L^p$ boundary value problems by a lot of papers
(see for example \cite{GJ,KFS1,NSX,S1,S5,S3,X4}),
to our best acknowledge, it is the first time to be used to
a nonlinear operator.
Due to Theorems $\ref{thm:1.1}$ and $\ref{thm:1.2}$, we also mention that it is possible to derive the related results for
Reifenberg flat domains, via the arguments introduced by
S. Byun and L. Wang \cite{SBLW,BW}.}
\end{remark}

\begin{corollary}
Assume the same conditions as in Theorem $\ref{thm:1.3}$, and
$f\in L^p(\Omega;\mathbb{R}^d)$ with $p>d$.
Then we have the uniform H\"older estimate
\begin{equation}\label{pri:1.5}
\|u_\varepsilon\|_{C^{0,\alpha}(\Omega)}
\leq C\Big\{\|f\|_{L^p(\Omega)}+\|g\|_{C^{0,1}(\partial\Omega)}\Big\},
\end{equation}
where $\alpha = 1-d/p$, and $C$
depends on $\mu_0,\mu_1,\mu_2,d,p,r_0$ and the character of $\Omega$.
\end{corollary}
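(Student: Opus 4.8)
The plan is to deduce $\eqref{pri:1.5}$ from Theorem \ref{thm:1.3} combined with the Morrey--Sobolev embedding. I read the equation in the Corollary as $\eqref{pde:1.1}$ with source term $F=\operatorname{div} f$, where $f\in L^p(\Omega;\mathbb{R}^d)$; then $F\in W^{-1,p}(\Omega)$ with $\|F\|_{W^{-1,p}(\Omega)}\le\|f\|_{L^p(\Omega)}$, and since $p>d\ge 2$ the admissible range $2\le p<\infty$ of Theorem \ref{thm:1.3} is met. That theorem therefore gives
\begin{equation*}
\|\nabla u_\varepsilon\|_{L^p(\Omega)}\le C\big\{\|f\|_{L^p(\Omega)}+\|g\|_{W^{1-1/p,p}(\partial\Omega)}\big\}.
\end{equation*}

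I would then dispose of two routine reductions. First, since $\Omega$ is a bounded $C^1$ domain, $\partial\Omega$ is a compact Lipschitz manifold and $C^{0,1}(\partial\Omega)=W^{1,\infty}(\partial\Omega)\hookrightarrow W^{1-1/p,p}(\partial\Omega)$ continuously, so $\|g\|_{W^{1-1/p,p}(\partial\Omega)}\le C\|g\|_{C^{0,1}(\partial\Omega)}$. Second, to pass from the gradient bound to a full $W^{1,p}(\Omega)$ bound, I would use the elementary inequality $\|v\|_{L^p(\Omega)}\le C\big(\|\nabla v\|_{L^p(\Omega)}+\|v\|_{L^p(\partial\Omega)}\big)$, valid on a bounded Lipschitz domain, applied to $v=u_\varepsilon$, together with $\|u_\varepsilon\|_{L^p(\partial\Omega)}=\|g\|_{L^p(\partial\Omega)}\le C\|g\|_{C^{0,1}(\partial\Omega)}$. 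Combining the three estimates yields
\begin{equation*}
\|u_\varepsilon\|_{W^{1,p}(\Omega)}\le C\big\{\|f\|_{L^p(\Omega)}+\|g\|_{C^{0,1}(\partial\Omega)}\big\}.
\end{equation*}

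Finally, because $p>d$ and $\Omega$ is a bounded $C^1$ domain, composing a bounded extension operator $W^{1,p}(\Omega)\to W^{1,p}(\mathbb{R}^d)$ with the classical Morrey inequality gives $W^{1,p}(\Omega)\hookrightarrow C^{0,1-d/p}(\overline{\Omega})$, hence $\|u_\varepsilon\|_{C^{0,\alpha}(\Omega)}\le C\|u_\varepsilon\|_{W^{1,p}(\Omega)}$ with $\alpha=1-d/p$. Chaining this with the previous display proves $\eqref{pri:1.5}$, the constant $C$ depending only on the quantities appearing in Theorem \ref{thm:1.3} and on the extension and Poincar\'e constants of $\Omega$. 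I expect no genuine obstacle here: the statement is a soft consequence of Theorem \ref{thm:1.3} plus standard functional-analytic facts, and the only point deserving a line of care is checking that the vectorial datum $f$ and the Lipschitz datum $g$ actually fit the hypotheses of Theorem \ref{thm:1.3} after the identifications $F=\operatorname{div} f$ and $C^{0,1}(\partial\Omega)\hookrightarrow W^{1-1/p,p}(\partial\Omega)$.
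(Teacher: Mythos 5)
The paper states this corollary without proof, so there is nothing to compare against line by line; judged on its own terms, your argument is correct and is exactly the route one would expect the authors to have in mind.

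A few small remarks. First, your reading that the source is $F=\operatorname{div} f$ (rather than an extra datum alongside $F$) is the only reading consistent with the right-hand side of \eqref{pri:1.5}, and it matches the way Theorems \ref{thm:4.1} and \ref{thm:5.3} treat $\operatorname{div}(f)+F$ in the proof of Theorem \ref{thm:1.3}. Second, you silently replaced the paper's $\|F\|_{W^{-1,p'}(\Omega)}$ by $\|F\|_{W^{-1,p}(\Omega)}$: this is in fact the correct space — the proof of Theorem \ref{thm:1.3} bounds $\|\nabla u_\varepsilon\|_{L^p}$ by $\|F\|_{L^p}+\|f\|_{L^p}$, and $\operatorname{div} f$ with $f\in L^p$ lies in $W^{-1,p}(\Omega)=(W^{1,p'}_0(\Omega))^*$, not in $W^{-1,p'}(\Omega)$ — so the paper's exponent is simply a typo, and your correction is what makes the chain of inequalities cohere. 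Third, the trace-Poincar\'e inequality $\|v\|_{L^p(\Omega)}\le C(\|\nabla v\|_{L^p(\Omega)}+\|v\|_{L^p(\partial\Omega)})$ you invoke is valid on bounded Lipschitz domains (by the standard compactness argument using Rellich and the continuity of the trace), and an equally clean alternative is to subtract a $W^{1,p}$ extension $\tilde g$ of $g$ with $\|\tilde g\|_{W^{1,p}(\Omega)}\le C\|g\|_{C^{0,1}(\partial\Omega)}$ and apply the ordinary Poincar\'e inequality to $u_\varepsilon-\tilde g\in W^{1,p}_0(\Omega)$; either way the conclusion is the same. Finally, the embeddings $C^{0,1}(\partial\Omega)\hookrightarrow W^{1-1/p,p}(\partial\Omega)$ on a compact $C^1$ boundary and $W^{1,p}(\Omega)\hookrightarrow C^{0,1-d/p}(\overline{\Omega})$ for $p>d$ on a bounded $C^1$ domain are both standard, so the corollary does indeed follow from Theorem \ref{thm:1.3} with no further input from the homogenization machinery.
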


The paper is organized in five sections.
In Section 2, we establish some properties on correctors,
and the required preliminary knowledge. In Section 3, we investigate
convergence rates for different type domains. We present the proof of
Theorem $\ref{thm:1.2}$, $\ref{thm:1.3}$ in Sections 4,5, respectively.

\section{Preliminaries}

\begin{lemma}\label{lemma:2.1}
Let $N(\cdot,\xi)\in H_{per}^1(Y)$ be the weak solution to the equation $\eqref{pde:1.2}$, and then
for any $\xi\in\mathbb{R}^d$, we have the following estimates
\begin{equation}\label{pri:2.1}
 \dashint_{Y} |N(\cdot,\xi)|^2 + \dashint_{Y} |\nabla N(\cdot,\xi)|^2 \leq C|\xi|^2
\end{equation}
and
\begin{equation}\label{pri:2.2}
\dashint_{Y} |\nabla_\xi N(\cdot,\xi)|^2 + \dashint_{Y} |\nabla_{\xi}\nabla N(\cdot,\xi)|^2 \leq C,
\end{equation}
where $C$ depends only on $\mu_0,\mu_2$ and $d$.
\end{lemma}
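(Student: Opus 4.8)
The plan is to derive both estimates by testing the cell problem \eqref{pde:1.2} against suitable functions and exploiting the coerciveness and Lipschitz bounds in \eqref{a:2}. For \eqref{pri:2.1}, I would use $N(\cdot,\xi)$ itself as a test function. Since $N(\cdot,\xi)\in\mathcal{W}_{per}(Y)$, the weak formulation gives $\int_Y \langle A^\xi(y,\nabla_y N),\nabla_y N\rangle\,dy=0$, i.e. $\int_Y \langle A(y,\xi+\nabla_y N),\nabla_y N\rangle\,dy=0$. Writing $A(y,\xi+\nabla_y N)-A(y,\xi)$ and using the monotonicity inequality in \eqref{a:2} with $z=\xi+\nabla_y N$ and $z'=\xi$, together with $A(y,0)=0$ from \eqref{a:1} and the Lipschitz bound $|A(y,\xi)|=|A(y,\xi)-A(y,0)|\leq\mu_2|\xi|$, I get
\begin{equation*}
\mu_0\dashint_Y|\nabla_y N(\cdot,\xi)|^2\leq \dashint_Y\langle A(y,\xi)-A(y,\xi+\nabla_y N),\nabla_y N\rangle\leq \mu_2|\xi|\Big(\dashint_Y|\nabla_y N(\cdot,\xi)|^2\Big)^{1/2},
\end{equation*}
whence $\big(\dashint_Y|\nabla_y N|^2\big)^{1/2}\leq(\mu_2/\mu_0)|\xi|$. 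The $L^2$ bound on $N$ itself then follows from the Poincar\'e–Wirtinger inequality on $Y$ (using the zero-average normalization), with constant depending on $d$.

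For \eqref{pri:2.2}, the idea is to differentiate the cell problem in $\xi$. Formally, setting $\chi=\nabla_\xi N(\cdot,\xi)$ (a $d\times d$ matrix-valued function whose columns lie in $\mathcal{W}_{per}(Y)$), differentiation of \eqref{pde:1.2} gives the linearized equation
\begin{equation*}
\operatorname{div}_y\!\Big(B(y,\xi)\big(I+\nabla_y\chi\big)\Big)=0\quad\text{in }Y,\qquad B(y,\xi):=D_zA(y,\xi+\nabla_y N(y,\xi)),
\end{equation*}
where $B$ is bounded by $\mu_2$ and uniformly elliptic with constant $\mu_0$ as a consequence of \eqref{a:2} (these two conditions pass to the derivative $D_zA$ by a standard difference-quotient argument). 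Testing with $\chi$ gives $\mu_0\dashint_Y|\nabla_y\chi|^2\leq \mu_2\dashint_Y|\nabla_y\chi|$, so $\dashint_Y|\nabla_y\chi|^2\leq(\mu_2/\mu_0)^2 d$, and Poincar\'e again controls $\dashint_Y|\chi|^2$. The bound on $\nabla_\xi\nabla N=\nabla_y\chi$ is exactly the quantity just estimated, so both terms in \eqref{pri:2.2} are controlled by a constant depending only on $\mu_0,\mu_2,d$.

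The main obstacle is making the $\xi$-differentiation rigorous: under the stated hypotheses $A(y,\cdot)$ is only Lipschitz, not $C^1$, so $D_zA$ need not exist pointwise and $\nabla_\xi N$ must be understood through difference quotients. The clean way is to work with $\delta_h^{e}N(y,\xi):=h^{-1}\big(N(y,\xi+he)-N(y,\xi)\big)$ for a unit vector $e$: subtracting the two cell problems and using \eqref{a:2} in the form $\langle A(y,z_1)-A(y,z_2),z_1-z_2\rangle\geq\mu_0|z_1-z_2|^2$ with $z_1=\xi+he+\nabla_y N(\cdot,\xi+he)$, $z_2=\xi+\nabla_y N(\cdot,\xi)$, one tests against $\delta_h^e N$ and absorbs, obtaining $\dashint_Y|\nabla_y\delta_h^e N|^2\leq C(\mu_0,\mu_2)$ uniformly in $h$; passing $h\to0$ along a subsequence identifies the weak limit as $\nabla_\xi N$ and yields \eqref{pri:2.2}. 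I expect the routine care needed in handling the growth term $|A(y,z_1)-A(y,z_2)|\leq\mu_2|z_1-z_2|$ and the telescoping with $\delta_h^e$ to be the only technically delicate point; everything else is a direct energy estimate.
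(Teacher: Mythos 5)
Your argument is essentially the paper's. For \eqref{pri:2.1} the paper compares $A(y,\xi+\nabla_yN)$ with $A(y,0)=0$ to bound $\dashint_Y|\xi+\nabla_yN|^2$ first, whereas you compare directly with $A(y,\xi)$, which isolates $\nabla_yN$ at once; both variants are valid and interchangeable. Note a sign slip in your display: monotonicity with $z=\xi+\nabla_yN$, $z'=\xi$, combined with the weak formulation $\int_Y A(y,\xi+\nabla_yN)\cdot\nabla_yN=0$, gives
\begin{equation*}
\mu_0\dashint_Y|\nabla_yN|^2\ \leq\ \dashint_Y\big\langle A(y,\xi+\nabla_yN)-A(y,\xi),\,\nabla_yN\big\rangle\ =\ -\dashint_Y\big\langle A(y,\xi),\nabla_yN\big\rangle\ \leq\ \mu_2|\xi|\Big(\dashint_Y|\nabla_yN|^2\Big)^{1/2},
\end{equation*}
i.e.\ the middle term carries the opposite order to what you wrote (as written your middle integrand is $\leq 0$). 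For \eqref{pri:2.2}, the rigorous difference-quotient argument you describe is exactly the paper's identity \eqref{f:2.1}: subtract the cell problems at $\xi$ and $\xi'$, test against $N(\cdot,\xi)-N(\cdot,\xi')$, bound the left side below by monotonicity and the right side above by the Lipschitz condition plus Young, and absorb to obtain $\|\nabla_yN(\cdot,\xi)-\nabla_yN(\cdot,\xi')\|_{L^2(Y)}\leq C|\xi-\xi'|$. The formal linearization you present first is a correct heuristic but, as you rightly observe, is not available pointwise under the mere Lipschitz hypothesis on $A(y,\cdot)$, and the paper skips it entirely in favor of the difference identity.
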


\begin{proof}
These results have already been in \cite{P}, and we provide a proof for the sake of the completeness.
Multiplying both sides of $\eqref{pde:1.2}$ by $N(y,\xi)$ and then integrating by parts, we have
\begin{equation*}
\begin{aligned}
0 &= \int_{Y} A^{\xi}(y,\nabla_y N(y,\xi))\cdot\nabla_yN(y,\xi) dy \\
 &= \int_{Y} A(y,\xi+\nabla_y N(y,\xi))\cdot\big(\xi+\nabla_yN(y,\xi)\big)dy
 - \int_{Y} A(y,\xi+\nabla_y N(y,\xi))dy\cdot\xi \\
 &\geq \mu_0 \int_Y |\xi + \nabla_y N(y,\xi)|^2 dy - \mu_2|\xi|\int_Y|\xi+\nabla_y N(y,\xi)|dy,
\end{aligned}
\end{equation*}
where we use the assumptions $\eqref{a:2}$ and $\eqref{a:3}$ in the last inequality.
By Young's inequality,
\begin{equation*}
\int_Y |\xi + \nabla_y N(y,\xi)|^2 dy \leq C(\mu_0,\mu_2)|\xi|^2.
\end{equation*}
Thus this together with Poincar\'e's inequality will give the stated estimate $\eqref{pri:2.1}$.

To show the estimate $\eqref{pri:2.2}$, we start with the following identity
\begin{equation}\label{f:2.1}
\begin{aligned}
&\int_Y \big[A(y,\xi+\nabla_y N(y,\xi)) - A(y,\xi^\prime+\nabla_y N(y,\xi^\prime))\big]\cdot
\big[\xi-\xi^\prime + \nabla_yN(y,\xi) - \nabla_y N(y,\xi^\prime)\big]dy \\
& = \int_{Y} \big[A(y,\xi+\nabla_y N(y,\xi)) - A(y,\xi^\prime+\nabla_y N(y,\xi^\prime))\big] dy
\cdot \big(\xi-\xi^\prime\big)
\end{aligned}
\end{equation}
where we use the fact that $N(\cdot,\xi),N(\cdot,\xi^\prime)\in H^1_{per}(Y)$ satisfy the equation
$\eqref{pde:1.2}$ for $\xi,\xi^\prime\in\mathbb{R}^d$, respectively. By the assumption $\eqref{a:2}$,
the left-hand side above is greater than
\begin{equation*}
\mu_0\int_Y |\xi-\xi^\prime + \nabla_yN(y,\xi) - \nabla_y N(y,\xi^\prime)|^2 dy,
\end{equation*}
while it follows from $\eqref{a:3}$ and Young's inequality that its right-hand side is less than
\begin{equation*}
\frac{\mu_0}{2}\int_Y |\xi-\xi^\prime + \nabla_yN(y,\xi) - \nabla_y N(y,\xi^\prime)|^2 dy
+ C(\mu_0,\mu_2)|\xi-\xi^\prime|^2.
\end{equation*}
Thus it is not hard to derive that
\begin{equation}\label{f:2.2}
\Big(\int_Y |\nabla_yN(y,\xi) - \nabla_y N(y,\xi^\prime)|^2 dy\Big)^{1/2} \leq C|\xi-\xi^\prime|,
\end{equation}
and this will give the estimate $\eqref{pri:2.2}$ in a similar way. We have completed the proof.
\end{proof}

\begin{remark}\label{remark:2.1}
\emph{In view of the estimate $\eqref{pri:2.1}$, one may conclude that $N(y,0) = 0$ for a.e. $y\in\mathbb{R}^d$.}
\end{remark}

\begin{lemma}
Suppose $\mathcal{L}_\varepsilon$ satisfies the assumptions $\eqref{a:1}$, $\eqref{a:2}$ and $\eqref{a:3}$.
Let $\widehat{A}$ be given in $\eqref{eq:1.1}$.
Then the effective operator $\mathcal{L}_0$
is still strongly monotone, coercive, satisfying the same growth condition, i.e,
\begin{equation}\label{pri:2.3}
\left\{\begin{aligned}
&\big<\widehat{A}(\xi)-\widehat{A}(\xi^\prime),\xi-\xi^\prime\big>\geq \mu_0|\xi-\xi^\prime|^2,\\
& |\widehat{A}(\xi)-\widehat{A}(\xi^\prime)|\leq C|\xi-\xi^\prime|,\\
& \widehat{A}(0) = 0,
\end{aligned}\right.
\end{equation}
where $C$ depends on $\mu_0,\mu_2$ and $d$.
\end{lemma}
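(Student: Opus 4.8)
The plan is to verify the three properties of $\widehat{A}$ directly from its definition $\eqref{eq:1.1}$ together with the corrector estimates of Lemma $\ref{lemma:2.1}$. The vanishing property $\widehat{A}(0)=0$ is immediate: by Remark $\ref{remark:2.1}$ we have $N(\cdot,0)=0$, so $\widehat{A}(0)=\int_Y A(y,0)\,dy = 0$ by the assumption $\eqref{a:1}$. For the Lipschitz bound, I would write
\begin{equation*}
\widehat{A}(\xi)-\widehat{A}(\xi^\prime)
= \int_Y\big[A(y,\xi+\nabla_yN(y,\xi)) - A(y,\xi^\prime+\nabla_yN(y,\xi^\prime))\big]\,dy,
\end{equation*}
apply the growth condition in $\eqref{a:2}$ pointwise, and then use the Cauchy--Schwarz inequality together with the corrector difference estimate $\eqref{f:2.2}$, namely $\|\nabla_yN(\cdot,\xi)-\nabla_yN(\cdot,\xi^\prime)\|_{L^2(Y)}\le C|\xi-\xi^\prime|$, to conclude $|\widehat{A}(\xi)-\widehat{A}(\xi^\prime)|\le \mu_2(1+C)|\xi-\xi^\prime|$.

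For the strong monotonicity, the key point is the orthogonality identity already exploited in the proof of Lemma $\ref{lemma:2.1}$: since $N(\cdot,\xi)$ and $N(\cdot,\xi^\prime)$ solve the cell problem $\eqref{pde:1.2}$, the gradient differences $\nabla_yN(y,\xi)-\nabla_yN(y,\xi^\prime)$ are ``test functions'' that annihilate the averaged flux. Concretely, testing the two equations against $N(\cdot,\xi)-N(\cdot,\xi^\prime)$ and subtracting gives exactly the identity $\eqref{f:2.1}$, whose right-hand side equals $\langle\widehat{A}(\xi)-\widehat{A}(\xi^\prime),\xi-\xi^\prime\rangle$. Hence
\begin{equation*}
\big\langle\widehat{A}(\xi)-\widehat{A}(\xi^\prime),\xi-\xi^\prime\big\rangle
= \int_Y\big[A(y,\xi+\nabla_yN(y,\xi))-A(y,\xi^\prime+\nabla_yN(y,\xi^\prime))\big]
\cdot\big[(\xi-\xi^\prime)+\nabla_yN(y,\xi)-\nabla_yN(y,\xi^\prime)\big]\,dy,
\end{equation*}
and the coerciveness inequality in $\eqref{a:2}$ bounds this from below by $\mu_0\int_Y|(\xi-\xi^\prime)+\nabla_yN(y,\xi)-\nabla_yN(y,\xi^\prime)|^2\,dy$. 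Finally, since $\nabla_yN(y,\xi)-\nabla_yN(y,\xi^\prime)$ has zero mean over $Y$, it is $L^2(Y)$-orthogonal to the constant vector $\xi-\xi^\prime$, so $\int_Y|(\xi-\xi^\prime)+\nabla_yN(y,\xi)-\nabla_yN(y,\xi^\prime)|^2\,dy \ge |\xi-\xi^\prime|^2$, which yields the desired lower bound $\mu_0|\xi-\xi^\prime|^2$.

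There is no real obstacle here; the only point requiring a little care is making sure the orthogonality between the mean-zero corrector gradient and the constant vector is invoked correctly (it is what lets one drop the corrector contribution in the monotonicity estimate while it genuinely helps, rather than hurts). This is the same mechanism already used to pass from $\eqref{f:2.1}$ to $\eqref{f:2.2}$ in Lemma $\ref{lemma:2.1}$, so the proof is essentially a repackaging of that computation. I would also remark that, as in $\eqref{pri:2.2}$, the Lipschitz bound on $\widehat A$ combined with $\eqref{f:2.2}$ shows $\widehat A$ is in fact differentiable in $\xi$ with bounded derivative, though only the stated inequalities are needed for what follows.
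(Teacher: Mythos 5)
Your proof is correct and follows essentially the same route as the paper: all three properties of $\widehat{A}$ are checked directly from $\eqref{eq:1.1}$ using the growth and monotonicity conditions together with the identity $\eqref{f:2.1}$ and the corrector bound $\eqref{f:2.2}$. Your formulation of the crucial last step in the monotonicity estimate — that $\nabla_y N(\cdot,\xi)-\nabla_y N(\cdot,\xi')$ has zero mean over $Y$ (because it is the gradient of a periodic function), hence is $L^2(Y)$-orthogonal to the constant $\xi-\xi'$, so the cross term drops — is actually a cleaner statement of the same fact the paper invokes somewhat loosely via a boundary integral over $\partial Y$.
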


\begin{proof}
The proof may be found in \cite{P}, and we provide a proof for the sake of completeness.
Due to the formula $\eqref{f:2.1}$, we have
\begin{equation*}
\begin{aligned}
\big<\widehat{A}(\xi)-\widehat{A}(\xi^\prime),\xi-\xi^\prime\big>
&\geq \mu_0\int_Y |\xi-\xi^\prime + \nabla_y(N(y,\xi) - N(y,\xi^\prime))|^2 dy,\\
&\geq \mu_0\int_Y |\xi-\xi^\prime|^2 dy,
\end{aligned}
\end{equation*}
where we use the fact that $\int_{\partial Y}\big[N(y,\xi) - N(y,\xi^\prime)\big]dS = 0$ for
$N(\cdot,\xi),N(\cdot,\xi^\prime)\in H_{per}^1(Y)$ have the same periodicity.
Note that
\begin{equation*}
\begin{aligned}
|\widehat{A}(\xi)-\widehat{A}(\xi^\prime)|&\leq
\int_Y\big|A(y,\xi+\nabla N(y,\xi))-A(y,\xi^\prime+\nabla N(y,\xi^\prime))\big| dy \\
&\leq \mu_2 \int_Y |\xi-\xi^\prime + \nabla N(y,\xi) - \nabla N(y,\xi^\prime)|dy \\
&\leq C|\xi-\xi^\prime|,
\end{aligned}
\end{equation*}
in which the last step is due to the estimate $\eqref{pri:2.2}$.
In view of Remark $\ref{remark:2.1}$, we may have the third line of
$\eqref{pri:2.3}$ and the proof is complete.
\end{proof}

\begin{remark}
\emph{Due to the second line of $\eqref{pri:2.3}$,
it is known that $\nabla \widehat{A}(z)$ exists for a.e. $z\in\mathbb{R}^d$. Moreover,
there holds
\begin{equation}\label{a:4}
\sum_{i,j=1}^d\nabla_j\widehat{A}_i(z)\xi_j\xi_i
 = \lim_{t\to 0} \frac{\big<\widehat{A}(z+t\xi)-\widehat{A}(z),\xi\big>}{t}
 \geq \mu_0|\xi|^2
\end{equation}
for any $\xi\in\mathbb{R}^d$ and for a.e. $z\in\mathbb{R}^d$,
and this property will guarantee that
the $H^2$ theory is still valid for the effective operator $\mathcal{L}_0$.}
\end{remark}

\begin{lemma}[Flux correctors]
Suppose $A$ satisfies $\eqref{a:1}$, $\eqref{a:2}$ and $\eqref{a:3}$.
Let $b(y,\xi) = A(y,\xi+\nabla N(y,\xi)) - \widehat{A}(\xi)$, where $y\in Y$ and $\xi\mathbb{R}^d$.
Then we have two properties: \emph{(i)} $\dashint_Y b(\cdot,\xi) = 0$;
\emph{(ii)} $\emph{div} b(\cdot,\xi) = 0$ in $Y$. Moreover, there
exists the so-called flux corrector $E_{ji}(\cdot,\xi)\in H^1_{per}(Y)$ such that
\begin{equation}\label{eq:2.1}
 b_i(y,\xi) = \frac{\partial}{\partial y_j}\big\{E_{ji}(y,\xi)\big\}
 \qquad \text{and} \quad E_{ji} = -E_{ij},
\end{equation}
and
\begin{equation}\label{pri:2.4}
\dashint_Y |\nabla_\xi E_{ji}(\cdot,\xi)|^2 + \dashint_Y |\nabla_\xi \nabla E_{ji}(\cdot,\xi)|^2 \leq C,
\end{equation}
where $C$ depends only on $\mu_0,\mu_2$ and $d$.
\end{lemma}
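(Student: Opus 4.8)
The plan is to verify the two structural properties directly from the definitions and then build $E_{ji}$ from an auxiliary periodic Poisson problem, finally differentiating the whole construction in $\xi$.

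First, property (i) is immediate from $\eqref{eq:1.1}$: $\dashint_Y b(\cdot,\xi)\,dy = \dashint_Y A(y,\xi+\nabla_y N(y,\xi))\,dy - \widehat{A}(\xi) = 0$. For (ii), I would test against an arbitrary $\varphi\in H^1_{per}(Y)$: since $\int_Y\nabla\varphi\,dy=0$ by periodicity and $N(\cdot,\xi)$ is a weak solution of $\eqref{pde:1.2}$, one gets $\int_Y b(y,\xi)\cdot\nabla\varphi\,dy = \int_Y A(y,\xi+\nabla_y N(y,\xi))\cdot\nabla\varphi\,dy - \widehat{A}(\xi)\cdot\int_Y\nabla\varphi\,dy = 0$, i.e. $\mathrm{div}\,b(\cdot,\xi)=0$ in $Y$ in the periodic sense.

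Next, for the existence of the flux corrector: note $b_i(\cdot,\xi)\in L^2(Y)$ since $|A(y,\xi+\nabla_y N(y,\xi))| = |A(y,\xi+\nabla_y N(y,\xi))-A(y,0)|\le\mu_2|\xi+\nabla_y N(y,\xi)|$ together with $\eqref{pri:2.1}$. Because $\dashint_Y b_i(\cdot,\xi)=0$, the periodic problem $\Delta f_i(\cdot,\xi)=b_i(\cdot,\xi)$ in $Y$ has a unique solution $f_i(\cdot,\xi)\in H^2_{per}(Y)$ with $\dashint_Y f_i(\cdot,\xi)=0$, and the standard $L^2$ elliptic estimate on the torus gives $\|\nabla f_i(\cdot,\xi)\|_{L^2(Y)}+\|\nabla^2 f_i(\cdot,\xi)\|_{L^2(Y)}\le C\|b_i(\cdot,\xi)\|_{L^2(Y)}$. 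I would then set $E_{ji}(y,\xi)=\partial_{y_j}f_i(y,\xi)-\partial_{y_i}f_j(y,\xi)\in H^1_{per}(Y)$, for which antisymmetry $E_{ji}=-E_{ij}$ is obvious. To check $b_i=\partial_{y_j}E_{ji}$ (summation in $j$), compute $\partial_{y_j}E_{ji}=\Delta f_i-\partial_{y_i}(\mathrm{div}_y f)$ where $\mathrm{div}_y f=\sum_j\partial_{y_j}f_j$; since $\Delta(\mathrm{div}_y f)=\mathrm{div}_y(\Delta f)=\mathrm{div}_y b=0$ by (ii) and $\int_Y\mathrm{div}_y f\,dy=0$ by periodicity, $\mathrm{div}_y f$ is a constant of mean zero, hence $\equiv0$, so $\partial_{y_j}E_{ji}=\Delta f_i=b_i$.

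Finally, for the estimate $\eqref{pri:2.4}$, the point is that $b(\cdot,\xi)$ is Lipschitz in $\xi$ as an $L^2(Y)$-valued map: using the growth condition in $\eqref{a:2}$, $|A(y,\xi+\nabla_y N(y,\xi))-A(y,\xi'+\nabla_y N(y,\xi'))|\le\mu_2|\xi-\xi'+\nabla_y N(y,\xi)-\nabla_y N(y,\xi')|$, and combining with $\eqref{f:2.2}$ and the Lipschitz bound on $\widehat{A}$ from $\eqref{pri:2.3}$ yields $\|b(\cdot,\xi)-b(\cdot,\xi')\|_{L^2(Y)}\le C|\xi-\xi'|$. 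Hence $\nabla_\xi b(\cdot,\xi)$ exists for a.e. $\xi$ with $\dashint_Y|\nabla_\xi b(\cdot,\xi)|^2\le C$. Since the solution operator $b_i\mapsto f_i$ of the periodic Poisson problem, and therefore $b\mapsto E_{ji}$, is linear and bounded from $L^2(Y)$ into $H^1(Y)$ and does not involve $\xi$, it commutes with difference quotients in $\xi$, so $E_{ji}(\cdot,\xi)$ is Lipschitz into $H^1(Y)$, which gives $\nabla_\xi E_{ji}$, $\nabla_\xi\nabla E_{ji}$ a.e. and the bound $\eqref{pri:2.4}$. The main obstacle I anticipate is exactly this differentiation in $\xi$: one must avoid the chain rule (since $A(y,\cdot)$ is merely Lipschitz, not $C^1$) and instead work with the uniform Lipschitz estimate $\eqref{f:2.2}$ for $\nabla_y N(\cdot,\xi)$ to obtain Lipschitz dependence of $b$, and then of $E_{ji}$, at the level of difference quotients, and argue that the resulting a.e.-defined $\nabla_\xi$ is legitimately interchangeable with the linear solution operator; everything else is either immediate or a standard periodic elliptic estimate.
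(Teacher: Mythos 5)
Your proposal is correct and follows essentially the same route as the paper: verify (i) from the definition of $\widehat{A}$ and (ii) from the corrector equation, solve the periodic Poisson problem $\Delta f_i = b_i$, set $E_{ji} = \partial_{y_j}f_i - \partial_{y_i}f_j$, and then obtain $\eqref{pri:2.4}$ by showing that $\xi\mapsto b(\cdot,\xi)$ is Lipschitz into $L^2(Y)$ (via the growth condition $\eqref{a:2}$, the bound $\eqref{f:2.2}$ on $\nabla_y N$, and the Lipschitz bound on $\widehat A$) and pushing this through the $\xi$-independent, bounded linear Poisson solver, exactly as the paper does via its $H^2$ estimate and Poincar\'e's inequality. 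You spell out two steps the paper only asserts — the divergence-free test for (ii) and the identity $\partial_{y_j}E_{ji}=\Delta f_i-\partial_{y_i}(\mathrm{div}_y f)=b_i$ using $\mathrm{div}_y f\equiv 0$ — but the argument is otherwise the same.
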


\begin{proof}
The proof is quite similar to the linear case (see for example \cite{S4,ZVVPSE1}).
It is clear to see that (i) and (ii) follow from the formula $\eqref{eq:1.1}$
and the equation $\eqref{pde:1.2}$, respectively. By (i),
there exists $f_i(\cdot,\xi)\in H_{per}^{2}(Y)$ such that $\Delta f_i(\cdot,\xi) = b_i(\cdot,\xi)$ in $Y$.
Let $E_{ji}(y,\xi)= \frac{\partial}{\partial y_j}\big\{f_{i}(y,\xi)\big\}
-\frac{\partial}{\partial y_i}\big\{f_{j}(y,\xi)\big\}$. Thus $E_{ji} = - E_{ij}$, and
one may derive the first expression in $\eqref{eq:2.1}$ from the fact $(\text{ii})$.
Then, the rest thing is to show the estimate $\eqref{pri:2.4}$.
For any $\xi,\xi^\prime\in\mathbb{R}^d$, note that
\begin{equation*}
\begin{aligned}
\int_{Y} |\nabla E_{ji}(y,\xi)- \nabla E_{ji}(y,\xi^\prime)|^2 dy
&\leq 2 \int_{Y} \big|\nabla^2 \big(f_{i}(y,\xi)-f_{i}(y,\xi^\prime)\big)\big|^2 dy\\
&\leq C \int_{2Y} \big|b_{i}(y,\xi)-b_{i}(y,\xi^\prime)\big|^2 dy
\leq C|\xi-\xi^\prime|^2
\end{aligned}
\end{equation*}
where we employ $H^2$ theory in the second step, and $\eqref{a:3}$ and $\eqref{f:2.2}$ in the last one.
This together with Poincar\'e's inequality finally leads to the
desired estimate $\eqref{pri:2.4}$, and we end the proof here.
\end{proof}

\begin{definition}\label{def:2.1}
Fix a nonnegative function $\zeta\in C_0^\infty(B(0,1/2))$, and $\int_{\mathbb{R}^d}\zeta(x)dx = 1$. Define the smoothing operator
\begin{equation}\label{def:2.1}
S_\varepsilon(f)(x) = f*\zeta_\varepsilon(x) = \int_{\mathbb{R}^d} f(x-y)\zeta_\varepsilon(y) dy,
\end{equation}
where $\zeta_\varepsilon=\varepsilon^{-d}\zeta(x/\varepsilon)$.
Let $\tilde{B}(0,1/2)\subset \mathbb{R}^{d-1}$ be a ball, and $\eta\in C^{\infty}_0(\tilde{B}(0,1/2))$ be a nonnegative function such that $\int_{\mathbb{R}^{d-1}}\eta = 1$. Then one may similarly define
\begin{equation}\label{def:2.2}
 K_\delta(g)(x) = g*\eta_\delta(x) = \int_{\mathbb{R}^{d-1}} g(x-y)\eta_\delta(y) dy,
\end{equation}
where $\eta_\delta=\delta^{-d+1}\zeta(x/\delta)$.
\end{definition}

\begin{lemma}
Let $f\in L^p(\mathbb{R}^d)$ for some $1\leq p<\infty$. Then for any $\varpi\in L_{per}^p(\mathbb{R}^d)$,
\begin{equation}\label{pri:2.6}
\big\|\varpi(\cdot/\varepsilon)S_\varepsilon(f)\big\|_{L^p(\mathbb{R}^d)}
\leq C\big\|\varpi\big\|_{L^p(Y)}\big\|f\big\|_{L^p(\mathbb{R}^d)},
\end{equation}
where $C$ depends on $\zeta$ and $d$.
\end{lemma}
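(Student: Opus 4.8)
The plan is to prove the smoothing estimate $\eqref{pri:2.6}$ for the convolution operator $S_\varepsilon$ via a combination of Fubini's theorem, the periodicity of $\varpi$, and a covering/scaling argument, following the standard route used in quantitative homogenization (e.g.\ \cite{S5,ZVVPSE1}). The idea is to bound $|S_\varepsilon(f)(x)|$ pointwise using H\"older's inequality against the probability density $\zeta_\varepsilon$, then raise to the $p$-th power and integrate, using the periodicity of $\varpi$ to replace $\int |\varpi(x/\varepsilon)|^p\,dx$ over a unit-scale cell by $\|\varpi\|_{L^p(Y)}^p$.

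First I would record the pointwise bound. Since $\zeta_\varepsilon\geq 0$ and $\int_{\mathbb{R}^d}\zeta_\varepsilon=1$, Jensen's (or H\"older's) inequality applied to $S_\varepsilon(f)(x)=\int f(x-y)\zeta_\varepsilon(y)\,dy$ gives
\begin{equation*}
|S_\varepsilon(f)(x)|^p \leq \int_{\mathbb{R}^d} |f(x-y)|^p \zeta_\varepsilon(y)\,dy.
\end{equation*}
Multiplying by $|\varpi(x/\varepsilon)|^p$ and integrating in $x$, then applying Fubini, yields
\begin{equation*}
\big\|\varpi(\cdot/\varepsilon)S_\varepsilon(f)\big\|_{L^p(\mathbb{R}^d)}^p
\leq \int_{\mathbb{R}^d} \zeta_\varepsilon(y) \Big(\int_{\mathbb{R}^d} |\varpi(x/\varepsilon)|^p |f(x-y)|^p\,dx\Big)dy.
\end{equation*}
Because $\zeta$ is supported in $B(0,1/2)$, the variable $y$ ranges over $|y|\leq \varepsilon/2$, so the outer integral has total mass $1$; it therefore suffices to bound the inner integral uniformly in such $y$.

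Next I would handle the inner integral $I(y)=\int_{\mathbb{R}^d} |\varpi(x/\varepsilon)|^p |f(x-y)|^p\,dx$. The main point is to decompose $\mathbb{R}^d$ into the translated $\varepsilon$-cubes $\varepsilon(k+Y)$, $k\in\mathbb{Z}^d$; on each such cube the change of variables $x=\varepsilon(k+z)$ together with periodicity gives $\int_{\varepsilon(k+Y)}|\varpi(x/\varepsilon)|^p\,dx=\varepsilon^d\|\varpi\|_{L^p(Y)}^p$. One then bounds $|f(x-y)|^p$ on the cube $\varepsilon(k+Y)$ by its behavior on a slightly enlarged cube; since $|y|\leq\varepsilon/2$, the set $\{x-y:x\in\varepsilon(k+Y)\}$ is contained in a fixed dilate $\varepsilon(k+CY)$, and summing over $k$ (each enlarged cube overlapping only a bounded number $C=C(d)$ of its neighbors) produces $I(y)\leq C\|\varpi\|_{L^p(Y)}^p\|f\|_{L^p(\mathbb{R}^d)}^p$. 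Combining with the previous display and taking $p$-th roots gives $\eqref{pri:2.6}$ with $C$ depending only on $\zeta$ (through its support and $L^\infty$ bound) and $d$.

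The main obstacle is the bookkeeping in the last step: one must avoid circularity, since $|\varpi|^p$ and $|f(\cdot-y)|^p$ are ``coupled'' in $I(y)$ and cannot be separated by a naive H\"older split (that would lose a factor depending on $\|f\|_\infty$). The clean way around this is precisely the cube decomposition above, using periodicity cube-by-cube so that $\|\varpi\|_{L^p(Y)}^p$ factors out of each cell exactly, while the overlap of the enlarged cubes is controlled by a dimensional constant. An alternative, slightly slicker formulation: write $\varpi(x/\varepsilon)S_\varepsilon(f)(x)$ and use that $S_\varepsilon$ maps $L^p$ to $L^p$ with norm $1$ after noting $\|\varpi(\cdot/\varepsilon)g\|_{L^p}$ can be compared to $\|\varpi\|_{L^p(Y)}$ times a maximal-type average of $g$; but the direct Fubini-plus-tiling argument is the most transparent and is what I would write out.
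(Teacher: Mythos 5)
The proposal has a genuine gap at the reduction step. After the Jensen--Fubini display you write that, because $\int\zeta_\varepsilon=1$, ``it therefore suffices to bound the inner integral $I(y)=\int_{\mathbb{R}^d}|\varpi(x/\varepsilon)|^p|f(x-y)|^p\,dx$ uniformly in $y$.'' But that uniform bound $I(y)\leq C\|\varpi\|_{L^p(Y)}^p\|f\|_{L^p(\mathbb{R}^d)}^p$ is \emph{false}: after freezing $y$, $\varpi(\cdot/\varepsilon)$ and $f(\cdot-y)$ remain coupled pointwise inside each $\varepsilon$-cube, and an $L^p$ function cannot be factored out of that product. Concretely, take $d=1$, $p=2$, $\varepsilon=1$, $y=0$, and $\varpi=f=\mathbf{1}_{[0,\delta)}$ (with $\varpi$ extended $1$-periodically). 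Then $I(0)=\delta$ while $\|\varpi\|_{L^2(Y)}^2\|f\|_{L^2(\mathbb{R})}^2=\delta^2$, so the ratio blows up as $\delta\to 0$. Your per-cube step---``bounds $|f(x-y)|^p$ on the cube by its behavior on a slightly enlarged cube''---does not produce a decoupling; to pull $\|\varpi\|_{L^p(Y)}^p$ out of $\int_{Q_k}|\varpi(x/\varepsilon)|^p|f(x-y)|^p\,dx$ you would need $\sup_{Q_k}|f(\cdot-y)|$, which is not controlled by the $L^p$ norm. The smoothing in $S_\varepsilon$ is precisely what makes the decoupling possible, and passing to a single translate $f(\cdot-y)$ discards it.

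The fix is to tile \emph{before} freezing $y$, and to use that $\zeta$ is bounded rather than merely a probability density. After Jensen, for $x\in Q_k:=\varepsilon(k+Y)$ one has
\[
\int_{\mathbb{R}^d}|f(x-y)|^p\,\zeta_\varepsilon(y)\,dy
\;\leq\; \|\zeta\|_{L^\infty}\,\varepsilon^{-d}\int_{\widetilde Q_k}|f|^p ,
\]
where $\widetilde Q_k$ is a fixed dilate of $Q_k$, since $\operatorname{supp}\zeta_\varepsilon\subset B(0,\varepsilon/2)$ forces $x-y\in\widetilde Q_k$. The right-hand side is constant in $x$ over $Q_k$, so it factors out of the $\varpi$-integral; periodicity gives $\int_{Q_k}|\varpi(x/\varepsilon)|^p\,dx=\varepsilon^d\|\varpi\|_{L^p(Y)}^p$; and summing over $k$ with the $O(1)$ overlap of the $\widetilde Q_k$ yields $\eqref{pri:2.6}$ with $C=C(d)\|\zeta\|_{L^\infty}$. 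This is essentially the argument in \cite[Lemma 2.1]{S5}, which the paper cites instead of reproving; the distinction from your write-up is not cosmetic, since without the $y$-average the estimate fails.
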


\begin{proof}
See \cite[Lemma 2.1]{S5}.
\end{proof}

\begin{lemma}
Let $f\in W^{1,p}(\mathbb{R}^d)$ for some $1<p<\infty$. Then we have
\begin{equation}\label{pri:2.7}
\big\|S_\varepsilon(f)-f\big\|_{L^p(\mathbb{R}^d)}
\leq C\varepsilon\big\|\nabla f\big\|_{L^p(\mathbb{R}^d)},
\end{equation}
where $C$ depends only on $d$.
\end{lemma}

\begin{proof}
See \cite[Lemma 2.2]{S5}.
\end{proof}

\begin{lemma}
Let $g\in H^1(\mathbb{R}^{d-1})$, and then we have
\begin{equation}\label{pri:2.8}
\|K_\delta(g)\|_{H^{3/2}(\mathbb{R}^{d-1})}
\leq C\delta^{-1/2}\|g\|_{H^1(\mathbb{R}^{d-1})}
\end{equation}
and
\begin{equation}\label{pri:2.9}
\|K_\delta(g) - g\|_{H^{1/2}(\mathbb{R}^{d-1})}
\leq C\delta^{1/2}\|g\|_{H^1(\mathbb{R}^{d-1})},
\end{equation}
where $C$ depends on $d$ and $\eta$.
\end{lemma}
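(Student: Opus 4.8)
The plan is to pass to the Fourier transform on $\mathbb{R}^{d-1}$ and reduce both estimates to pointwise bounds on Fourier multipliers. Write $n=d-1$, and recall that for $s\in\mathbb{R}$ one has $\|f\|_{H^s(\mathbb{R}^n)}^2=\int_{\mathbb{R}^n}(1+|\xi|^2)^s|\widehat f(\xi)|^2\,d\xi$, and that with the standard normalization $\widehat{K_\delta g}(\xi)=\widehat\eta(\delta\xi)\,\widehat g(\xi)$ since $K_\delta g=g*\eta_\delta$ with $\eta_\delta(x)=\delta^{-n}\eta(x/\delta)$. Because $\eta\in C_0^\infty(\widetilde B(0,1/2))$ with $\int_{\mathbb{R}^n}\eta=1$, the function $\widehat\eta$ is Schwartz, $\widehat\eta(0)=1$, and there is a constant $C=C(\eta,d)$ with $\|\widehat\eta\|_{L^\infty}+\|\nabla\widehat\eta\|_{L^\infty}+\sup_{\zeta}(1+|\zeta|)^2|\widehat\eta(\zeta)|\le C$; these are the only facts about $\eta$ I shall use. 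I would also note that it suffices to treat $0<\delta\le1$, which is the only range needed in the applications.

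To prove \eqref{pri:2.8}, by Plancherel it is enough to establish the pointwise multiplier bound
\[
(1+|\xi|^2)^{1/2}\,|\widehat\eta(\delta\xi)|^2\le C\,\delta^{-1}\qquad\text{for all }\xi\in\mathbb{R}^n,
\]
since multiplying through by $(1+|\xi|^2)|\widehat g(\xi)|^2$ and integrating then gives $\|K_\delta g\|_{H^{3/2}}^2\le C\delta^{-1}\|g\|_{H^1}^2$. I would verify this bound by splitting into two regimes. If $\delta|\xi|\le1$, then $(1+|\xi|^2)^{1/2}\le1+|\xi|\le1+\delta^{-1}\le2\delta^{-1}$ and $|\widehat\eta(\delta\xi)|^2\le\|\widehat\eta\|_{L^\infty}^2$, so the left-hand side is $\le C\delta^{-1}$. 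If $\delta|\xi|>1$, then $|\xi|\ge\delta^{-1}\ge1$ gives $(1+|\xi|^2)^{1/2}\le2|\xi|$, while the decay of $\widehat\eta$ gives $|\widehat\eta(\delta\xi)|^2\le C(\delta|\xi|)^{-4}$, whence the left-hand side is $\le C\delta^{-4}|\xi|^{-3}\le C\delta^{-1}$ because $|\xi|^{-3}\le\delta^{3}$.

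To prove \eqref{pri:2.9}, again by Plancherel it suffices to show
\[
|\widehat\eta(\delta\xi)-1|^2\le C\,\delta\,(1+|\xi|^2)^{1/2}\qquad\text{for all }\xi\in\mathbb{R}^n,
\]
because multiplying by $(1+|\xi|^2)^{1/2}|\widehat g(\xi)|^2$ and integrating yields $\|K_\delta g-g\|_{H^{1/2}}^2\le C\delta\|g\|_{H^1}^2$. On the regime $\delta|\xi|\le1$, the mean value theorem together with $\widehat\eta(0)=1$ gives $|\widehat\eta(\delta\xi)-1|\le\|\nabla\widehat\eta\|_{L^\infty}\,\delta|\xi|$, hence $|\widehat\eta(\delta\xi)-1|^2\le C\delta^2|\xi|^2=C\delta\,(\delta|\xi|)\,|\xi|\le C\delta|\xi|\le C\delta(1+|\xi|^2)^{1/2}$. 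On the regime $\delta|\xi|>1$, one bounds $|\widehat\eta(\delta\xi)-1|^2\le(\|\widehat\eta\|_{L^\infty}+1)^2\le C$ and notes $\delta(1+|\xi|^2)^{1/2}\ge\delta|\xi|>1$, so $C\le C\delta(1+|\xi|^2)^{1/2}$ as well. This closes the argument.

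Neither estimate presents a genuine obstacle; the only thing requiring care is to organize the two scaling regimes $\delta|\xi|\le1$ and $\delta|\xi|>1$ so that the fractional weights $(1+|\xi|^2)^{1/2}$ are absorbed correctly, and to record at the start the quantitative properties of $\widehat\eta$ — uniform boundedness, Lipschitz control near the origin via $\widehat\eta(0)=1$ and $\nabla\widehat\eta\in L^\infty$, and quadratic decay — that make the two-regime splitting go through. The argument uses $\eta\in C_0^\infty$ only through $\widehat\eta\in\mathcal S$, so no further structure of the mollifier is needed.
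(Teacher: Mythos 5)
Your proof is correct, and it follows the same overall strategy as the paper: pass to the Fourier side, identify the multiplier $\widehat\eta(\delta\xi)$ (resp.\ $\widehat\eta(\delta\xi)-1$), and reduce each estimate to a pointwise bound on that multiplier. For the second estimate \eqref{pri:2.9}, your two-regime splitting at $\delta|\xi|=1$, with the Lipschitz bound $|\widehat\eta(\delta\xi)-1|\le\|\nabla\widehat\eta\|_{L^\infty}\delta|\xi|$ near the origin and the uniform bound away from it, is precisely what the paper does.

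The one genuine difference is in \eqref{pri:2.8}. There the paper avoids any regime-splitting by using the algebraic identity $\widehat{\nabla\eta}(\delta\xi)=2\pi i\,\delta\xi\,\widehat\eta(\delta\xi)$ to write
\[
|\xi|^3\,|\widehat\eta(\delta\xi)|^2
=\frac{1}{2\pi\delta}\,|\xi|^2\,|\widehat\eta(\delta\xi)|\,|\widehat{\nabla\eta}(\delta\xi)|
\le \frac{C}{\delta}\,|\xi|^2,
\]
which requires only that $\widehat\eta$ and $\widehat{\nabla\eta}$ be in $L^\infty$ (equivalently, $|\widehat\eta(\zeta)|\lesssim(1+|\zeta|)^{-1}$). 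You instead bound $(1+|\xi|^2)^{1/2}|\widehat\eta(\delta\xi)|^2\le C\delta^{-1}$ directly by splitting at $\delta|\xi|=1$ and invoking quadratic decay $|\widehat\eta(\zeta)|\lesssim(1+|\zeta|)^{-2}$ on the high-frequency piece. Both hypotheses are automatic since $\eta\in C_0^\infty$, so the two proofs have equal force here; the paper's version is a hair slicker (no case split, and it does not need $\delta\le1$ for the main multiplier bound), while yours is more mechanical and generalizes more transparently to other fractional exponents, since the exponent bookkeeping is explicit. Your explicit restriction to $0<\delta\le1$ is appropriate and harmless: that is the only range used, and for $\delta$ bounded below the estimate is trivial anyway.

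One very minor remark: in the high-frequency regime of \eqref{pri:2.8} you only need $|\widehat\eta(\zeta)|\lesssim|\zeta|^{-1/2}$ (any decay rate $\ge 1/2$ suffices to close $\delta^{-2a}|\xi|^{1-2a}\le\delta^{-1}$ for $|\xi|>\delta^{-1}$), so quadratic decay is more than required, though of course freely available.
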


\begin{proof}
The main idea has been in \cite[Lemma 2.2]{S5}, and we provide a proof for
the sake of the completeness. By the definition, one may have
\begin{equation*}
\|K_\delta(g)\|_{H^{3/2}(\mathbb{R}^{d-1})}^2
= \int_{\mathbb{R}^{d-1}} (1+|\xi|^2)^{\frac{3}{2}}
|\widehat{\eta}(\delta\xi)|^2|\widehat{g}|^2 d\xi,
\end{equation*}
in which the notation ``$\widehat{g}$'' represents the Fourier transformation of $g$.
It suffices to show
\begin{equation*}
\begin{aligned}
\int_{\mathbb{R}^{d-1}}|\xi|^3|\widehat{\eta}(\delta\xi)|^2|\widehat{g}|^2 d\xi
&\leq C \int_{\mathbb{R}^{d-1}}|\xi|^2|\widehat{\eta}(\delta\xi)||\widehat{\nabla\eta}(\delta\xi)|
|\widehat{g}|^2 \frac{d\xi}{\delta} \\
&\leq C\int_{\mathbb{R}^{d-1}}|\xi|^2|\widehat{g}|^2 d\xi,
\end{aligned}
\end{equation*}
where $C$ depends on $d$ and $\eta$. Here we use the fact that
$\widehat{\nabla\eta}(\delta\xi) = 2\pi i \delta\xi \widehat{\eta}(\delta\xi)$
and
\begin{equation*}
\|\widehat{\eta}\|_{L^\infty(\mathbb{R}^{d-1})}
+ \|\widehat{\nabla\eta}\|_{L^\infty(\mathbb{R}^{d-1})} \leq C(d,\eta).
\end{equation*}

To obtain $\eqref{pri:2.9}$, it suffices to derive
\begin{equation*}
\begin{aligned}
\int_{\mathbb{R}^{d-1}} |\xi||\widehat{g}-\widehat{\eta}(\delta\xi)\widehat{g}|^2 d\xi
&\leq C\delta^2\int_{\delta|\xi|\leq 1} |\xi|^3|\widehat{g}|^2 d\xi
+ C \int_{\delta|\xi|> 1} |\xi||\widehat{g}|^2 d\xi \\
&\leq C\delta \int_{\mathbb{R}^{d-1}} |\xi|^2|\widehat{g}|^2 d\xi,
\end{aligned}
\end{equation*}
where we notice that
$\widehat{\eta}(0) = 1$ and $\widehat{\eta}\in W^{1,\infty}(\mathbb{R}^{d-1})$ in the
first inequality,  and we are done.
\end{proof}

\begin{remark}
\emph{If $g\in H^1(\partial B(0,r))$ for any $r>0$,
then there exists $(g)_\delta\in H^{3/2}(\partial B(0,r))$ such that
\begin{equation}\label{pri:2.10}
\|(g)_\delta\|_{H^{3/2}(\partial B(0,r))}
\leq C\delta^{-1/2}\|g\|_{H^1(\partial B(0,r))}
\qquad
\|(g)_\delta-g\|_{H^{1/2}(\partial B(0,r))}
\leq C\delta^{1/2}\|g\|_{H^1(\partial B(0,r))},
\end{equation}
in which the constant $C$ is independent of $r$.
This estimate is based upon the above results $\eqref{pri:2.8}$ and $\eqref{pri:2.9}$,
respectively. We mention that it has already been given in \cite{SZ1}
without a proof. Similarly, inspired by the estimate $\eqref{pri:2.7}$ we may have
\begin{equation}\label{pri:2.13}
\|(g)_\delta-g\|_{L^{2}(\partial B(0,r))}
\leq C\delta\|g\|_{H^1(\partial B(0,r))}.
\end{equation}}
\end{remark}

\begin{lemma}[interior Caccioppoli's inequality]\label{lemma:2.3}
Assume that $\mathcal{L}_\varepsilon$ satisfies the conditions
$\eqref{a:1}$, $\eqref{a:2}$, and $\eqref{a:3}$.
Let $u_\varepsilon\in H^1(B(0,2r))$ be a weak solution of
$\mathcal{L}_\varepsilon u_\varepsilon = F$ in $B(0,2r)$ with $r>0$. Then we have
\begin{equation}\label{pri:2.14}
\int_{B(0,r)}|\nabla u_\varepsilon|^2 dx
\leq \frac{C}{r^2}\inf_{c\in\mathbb{R}}\int_{B(0,2r)}|u_\varepsilon - c|^2 dx
+ Cr^2\int_{B(0,2r)}|F|^2 dx,
\end{equation}
where $C$ depends on $\mu_0,\mu_2$ and $d$.
\end{lemma}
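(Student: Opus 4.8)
The plan is to run the classical Caccioppoli argument. The key structural observation is that $\eqref{a:1}$ lets us write $A(x/\varepsilon,\nabla u_\varepsilon)=A(x/\varepsilon,\nabla u_\varepsilon)-A(x/\varepsilon,0)$, so that both the coercivity and the Lipschitz growth in $\eqref{a:2}$ apply with the second arguments $\nabla u_\varepsilon$ and $0$; in particular $\langle A(x/\varepsilon,\nabla u_\varepsilon),\nabla u_\varepsilon\rangle\geq\mu_0|\nabla u_\varepsilon|^2$ and $|A(x/\varepsilon,\nabla u_\varepsilon)|\leq\mu_2|\nabla u_\varepsilon|$. (The smoothness condition $\eqref{a:3}$ will not actually be needed.)

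First I would fix $c\in\mathbb{R}$, pick a cut-off $\varphi\in C_0^\infty(B(0,2r))$ with $\varphi\equiv1$ on $B(0,r)$, $0\leq\varphi\leq1$ and $|\nabla\varphi|\leq C/r$, and test the equation $\mathcal{L}_\varepsilon u_\varepsilon=F$ with $\psi=\varphi^2(u_\varepsilon-c)$. This is legitimate because $\psi\in H^1_0(B(0,2r))$ and $A(x/\varepsilon,\nabla u_\varepsilon)\in L^2(B(0,2r))$ by the growth bound just mentioned. Expanding $\nabla\psi=\varphi^2\nabla u_\varepsilon+2\varphi(u_\varepsilon-c)\nabla\varphi$ yields
\[
\int_{B(0,2r)}\varphi^2\,A(x/\varepsilon,\nabla u_\varepsilon)\cdot\nabla u_\varepsilon
=\int_{B(0,2r)}F\,\varphi^2(u_\varepsilon-c)
-2\int_{B(0,2r)}\varphi\,(u_\varepsilon-c)\,A(x/\varepsilon,\nabla u_\varepsilon)\cdot\nabla\varphi.
\]

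Next I would bound the left-hand side below by $\mu_0\int\varphi^2|\nabla u_\varepsilon|^2$, and estimate the right-hand side by Young's inequality together with $|A(x/\varepsilon,\nabla u_\varepsilon)|\leq\mu_2|\nabla u_\varepsilon|$: the last integral is at most $\tfrac{\mu_0}{4}\int\varphi^2|\nabla u_\varepsilon|^2+C(\mu_0,\mu_2)\int|\nabla\varphi|^2|u_\varepsilon-c|^2$, while the $F$-integral is at most $Cr^2\int_{B(0,2r)}|F|^2+C r^{-2}\int_{B(0,2r)}|u_\varepsilon-c|^2$ (using $0\leq\varphi\leq1$). Absorbing the gradient term into the left-hand side, using $|\nabla\varphi|\leq C/r$ and $\varphi\equiv1$ on $B(0,r)$, and finally taking the infimum over $c\in\mathbb{R}$, we arrive at $\eqref{pri:2.14}$ with $C=C(\mu_0,\mu_2,d)$.

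I do not expect a genuine obstacle here: the argument is standard, and all constants are automatically independent of $\varepsilon$ because the constants in $\eqref{a:2}$ are uniform in the $y$-variable. The only mild point is to justify that $\psi=\varphi^2(u_\varepsilon-c)$ is an admissible test function, which is guaranteed by the $L^2$ integrability of $A(x/\varepsilon,\nabla u_\varepsilon)$ coming from $\eqref{a:1}$ and $\eqref{a:2}$.
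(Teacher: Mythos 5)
Your proposal is correct and matches the paper's own proof in all essentials: both test the weak formulation with $\psi=\varphi^2(u_\varepsilon-c)$, use $\eqref{a:1}$ to reduce $\eqref{a:2}$ to coercivity and growth at $z^\prime=0$, and then absorb via Young's inequality before taking the infimum over $c$. Your observation that $\eqref{a:3}$ is not actually used is also accurate; the paper lists it among the hypotheses but the argument requires only $\eqref{a:1}$ and $\eqref{a:2}$.
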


\begin{proof}
The proof may be found in \cite{GM}.
By the definition of the weak solution,
\begin{equation}\label{pde:2.4}
\int_{B(0,2r)}A(x/\varepsilon,\nabla u_\varepsilon)\cdot\nabla\phi dx
= \int_{B(0,2r)} F\phi dx
\end{equation}
holds for any $\phi\in H_0^1(B(0,2r))$.
Set $\phi = \psi_{r}^2(u_\varepsilon - c)$ for any $c\in\mathbb{R}$, where
$\psi_{r}\in C_0^1(B(0,2r))$ is a cut-off function, satisfying
$\psi_r = 1$ in $B(0,r)$ and $\psi_r = 0$ outside $B(0,3r/2)$ with
$|\nabla\psi_r|\leq C/r$. The stated estimate $\eqref{pri:2.14}$ follows
from the assumptions $\eqref{a:1}$, $\eqref{a:2}$ and $\eqref{a:3}$ coupled
with Young's inequality, and the details will also be found in the later discussion.
\end{proof}

\begin{remark}
\emph{Let $c = \dashint_{B(0,2r)}u_\varepsilon$ in the estimate $\eqref{pri:2.14}$, and
it follows from the Sobolev-Poincar\'e inequality that
\begin{equation}\label{pri:2.17}
  \dashint_{B(0,r)} |\nabla u_\varepsilon|^2
  \leq C\Big(\dashint_{B(0,2r)}|\nabla u_\varepsilon|^{\frac{2d}{d+2}}\Big)^{\frac{d+2}{d}}
  + Cr^2\dashint_{B(0,2r)}|F|^2.
\end{equation}
Using the reverse inequality (see \cite[Chapter V, Theorem 1.2]{GM}),
we can obtain higher integrability, and there exists $p_0>2$, depending on $\mu_0,\mu_2,d$, such that
\begin{equation}\label{pri:2.16}
 \dashint_{B(0,r)} |\nabla u_\varepsilon|^p
 \leq C\Big(\dashint_{B(0,2r)} |\nabla u_\varepsilon|^2\Big)^{p/2}
 + Cr^p\dashint_{B(0,2r)}|F|^p
\end{equation}
holds for any $2\leq p< p_0$,
where $C$ depends on $\mu_0,\mu_2,d$ and $p$.}
\end{remark}

\begin{thm}[$H^1$ theory]\label{thm:2.1}
Let $\Omega$ be a bounded Lipschitz domain.
Assume that $\mathcal{L}_\varepsilon$ satisfies the conditions
$\eqref{a:1}$, $\eqref{a:2}$, and $\eqref{a:3}$. Let
$u_\varepsilon\in H^1(\Omega)$ be the solution of $\eqref{pde:1.1}$. Then we have
\begin{equation}\label{pri:2.12}
 \|\nabla u_\varepsilon\|_{L^2(\Omega)}
 \leq C\Big\{r_0\|F\|_{L^2(\Omega)} + \|g\|_{H^{1/2}(\partial\Omega)}\Big\},
\end{equation}
where $C$ depends on $\mu_0,\mu_2,d$ and the character of $\Omega$.
\end{thm}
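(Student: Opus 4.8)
\emph{Proof sketch.} The plan is to reduce the problem to homogeneous boundary data and then run the standard energy estimate based on the coercivity and growth conditions in $\eqref{a:2}$, together with Poincar\'e's inequality. First I would pick a lifting of the boundary datum: since $\Omega$ is Lipschitz, the trace operator $H^1(\Omega)\to H^{1/2}(\partial\Omega)$ has a bounded right inverse, so there exists $G\in H^1(\Omega)$ with $G=g$ on $\partial\Omega$ and $\|G\|_{H^1(\Omega)}\leq C\|g\|_{H^{1/2}(\partial\Omega)}$, where $C$ depends only on $d$ and the character of $\Omega$. Then $w:=u_\varepsilon-G$ belongs to $H_0^1(\Omega)$ and is an admissible test function.

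Taking $\phi=w$ in the weak formulation
\begin{equation*}
\int_\Omega A(x/\varepsilon,\nabla u_\varepsilon)\cdot\nabla\phi\,dx=\int_\Omega F\phi\,dx,\qquad \phi\in H_0^1(\Omega),
\end{equation*}
I would rewrite it as
\begin{equation*}
\int_\Omega A(x/\varepsilon,\nabla u_\varepsilon)\cdot\nabla u_\varepsilon\,dx
=\int_\Omega A(x/\varepsilon,\nabla u_\varepsilon)\cdot\nabla G\,dx+\int_\Omega F(u_\varepsilon-G)\,dx.
\end{equation*}
Because $A(y,0)=0$ by $\eqref{a:1}$, applying the coercivity in $\eqref{a:2}$ with $z=\nabla u_\varepsilon$ and $z'=0$ gives $A(x/\varepsilon,\nabla u_\varepsilon)\cdot\nabla u_\varepsilon\geq\mu_0|\nabla u_\varepsilon|^2$, while the Lipschitz bound in $\eqref{a:2}$ (again with $z'=0$) gives $|A(x/\varepsilon,\nabla u_\varepsilon)|\leq\mu_2|\nabla u_\varepsilon|$. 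Hence the left-hand side is at least $\mu_0\|\nabla u_\varepsilon\|_{L^2(\Omega)}^2$.

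For the right-hand side, the first integral is bounded by $\mu_2\|\nabla u_\varepsilon\|_{L^2(\Omega)}\|\nabla G\|_{L^2(\Omega)}\leq\tfrac{\mu_0}{4}\|\nabla u_\varepsilon\|_{L^2(\Omega)}^2+C\|\nabla G\|_{L^2(\Omega)}^2$ by Young's inequality. For the second integral, since $w=u_\varepsilon-G\in H_0^1(\Omega)$, Poincar\'e's inequality yields $\|u_\varepsilon-G\|_{L^2(\Omega)}\leq Cr_0\|\nabla(u_\varepsilon-G)\|_{L^2(\Omega)}\leq Cr_0\big(\|\nabla u_\varepsilon\|_{L^2(\Omega)}+\|\nabla G\|_{L^2(\Omega)}\big)$, so by Cauchy--Schwarz and Young's inequality $\big|\int_\Omega F(u_\varepsilon-G)\big|\leq\tfrac{\mu_0}{4}\|\nabla u_\varepsilon\|_{L^2(\Omega)}^2+Cr_0^2\|F\|_{L^2(\Omega)}^2+C\|\nabla G\|_{L^2(\Omega)}^2$. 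Absorbing the two $\tfrac{\mu_0}{4}\|\nabla u_\varepsilon\|_{L^2(\Omega)}^2$ terms into the left-hand side and using $\|\nabla G\|_{L^2(\Omega)}\leq C\|g\|_{H^{1/2}(\partial\Omega)}$ gives $\eqref{pri:2.12}$.

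There is no serious obstacle here; the only points requiring care are the construction of the bounded lifting $G$ (this is where the Lipschitz character of $\Omega$ enters) and tracking the dependence of the Poincar\'e constant on the diameter $r_0$, so that the $\|F\|_{L^2(\Omega)}$ term carries the correct power of $r_0$. The existence of the weak solution $u_\varepsilon$ is not at issue, being a consequence of the theory of monotone operators exactly as for the cell problem discussed in the first remark of Section~1. $\hfill\square$
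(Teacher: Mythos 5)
Your proof is correct and follows essentially the same energy-method approach as the paper: the only superficial difference is that you lift $g$ by a generic bounded right inverse of the trace operator, whereas the paper takes $z$ to be the harmonic extension of $g$ (which of course also satisfies $\|\nabla z\|_{L^2(\Omega)}\leq C\|g\|_{H^{1/2}(\partial\Omega)}$); the rest --- testing with $u_\varepsilon-G\in H_0^1(\Omega)$, using $A(y,0)=0$ together with $\eqref{a:2}$ to get coercivity and linear growth of $A(y,\cdot)$, then Young and Poincar\'e to absorb --- is identical.
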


\begin{proof}
The proof is standard, and we provide a proof for the sake of the completeness.
By the definition of the weak solution,
one may have
\begin{equation*}
\int_{\Omega}A(x/\varepsilon,\nabla u_\varepsilon)\cdot\nabla(u_\varepsilon - z) dx
= \int_{\Omega} F(u_\varepsilon - z) dx,
\end{equation*}
where
$z\in H^1(\Omega)$ satisfies
\begin{equation*}
 \Delta z = 0 \quad \text{in}~~\Omega,
 \qquad z = g \quad \text{on}~~\partial\Omega.
\end{equation*}
It is well known that
\begin{equation}\label{f:2.10}
  \|\nabla z\|_{L^2(\Omega)}\leq C\|g\|_{H^{1/2}(\partial\Omega)},
\end{equation}
where $C$ depends on $d$ and the character of $\Omega$. Then by the assumptions
$\eqref{a:1}$, $\eqref{a:2}$ and $\eqref{a:3}$ we have
\begin{equation*}
\begin{aligned}
&\int_{\Omega} A(x/\varepsilon,\nabla u_\varepsilon)
\cdot(\nabla u_\varepsilon - \nabla z)dx \\
&\geq \mu_0\int_{\Omega}|\nabla u_\varepsilon|^2 dx
-\frac{\mu_0}{2}\int_{\Omega}|\nabla u_\varepsilon|^2 dx
-C(\mu_0,\mu_2)\int_{\Omega}|\nabla z|^2 dx,
\end{aligned}
\end{equation*}
in which we use Young's inequality, and
\begin{equation*}
\begin{aligned}
\Big|\int_{\Omega}F(u_\varepsilon - z)dx\Big|
&\leq Cr_0\|F\|_{L^2(\Omega)}\|\nabla(u_\varepsilon - z)\|_{L^2(\Omega)}\\
&\leq \frac{\mu_0}{4}\int_{\Omega}|\nabla u_\varepsilon|^2 dx
+ Cr_0^2\int_{\Omega}|F|^2 dx + C\int_{\Omega}|\nabla z|^2 dx
\end{aligned}
\end{equation*}
where we employ Poincar\'e's inequality in the first step. Thus we have
\begin{equation}\label{f:2.11}
\int_{\Omega}|\nabla u_\varepsilon|^2 dx
\leq Cr_0^2\int_{\Omega}|F|^2 dx + C\int_{\Omega}|\nabla z|^2 dx
\end{equation}
Consequently, this together with the estimate $\eqref{f:2.10}$ leads to
the desired estimate $\eqref{pri:2.12}$, and we are done.
\end{proof}

\begin{thm}[$L^p$ estimates]\label{thm:2.2}
Assume the same conditions as in Theorem $\ref{thm:2.1}$.
Given $F\in L^p(\Omega)$ for some $p>2$ and $g\in W^{1-1/p,p}(\partial\Omega)$,
let $u_\varepsilon\in H^1(\Omega)$ be the solution of $\eqref{pde:1.1}$ and
$\eqref{pdeD:1}$. Then we have the following uniform estimate
\begin{equation}\label{pri:2.12}
 \|\nabla u_\varepsilon\|_{L^p(\Omega)}
 \leq C_p\Big\{r_0\|F\|_{L^p(\Omega)} + \|g\|_{W^{1-1/p,p}(\partial\Omega)}\Big\},
\end{equation}
where $C_p$ depends on $\mu_0,\mu_2,d,p$ and the character of $\Omega$.
\end{thm}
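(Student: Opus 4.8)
The plan is to deduce the uniform $W^{1,p}$ bound for $p>2$ from the $W^{1,p}$ theory of the homogenized-type operator at the frozen-coefficient level, combined with a perturbation/real-variable argument exactly in the spirit of Shen's real method \cite{S3}. The key structural fact is that, because of the coerciveness and Lipschitz conditions \eqref{a:2}, the nonlinear map $z\mapsto A(x/\varepsilon,z)$ behaves like a (measurable, uniformly elliptic) linear coefficient matrix as far as $L^p$ gradient estimates are concerned: freezing the gradient at a reference vector $\xi$ and writing $A(x/\varepsilon,\nabla u_\varepsilon) = A(x/\varepsilon,\xi) + \big[A(x/\varepsilon,\nabla u_\varepsilon)-A(x/\varepsilon,\xi)\big]$, the difference is controlled by $\mu_2|\nabla u_\varepsilon - \xi|$ pointwise. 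This is precisely what is needed to run the Caffarelli--Peral type decomposition.

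First I would reduce to the homogeneous case $g=0$ by subtracting the harmonic (or $\mathcal L_0$-harmonic) extension $z$ of $g$, using the standard $W^{1,p}$ bound $\|\nabla z\|_{L^p(\Omega)}\le C_p\|g\|_{W^{1-1/p,p}(\partial\Omega)}$ on Lipschitz domains (this is classical, e.g. via the single-layer potential / Jerison--Kenig on $C^1$ domains, and for Lipschitz domains one has it in the relevant range of $p$). Second, I would establish the two local ingredients the real method requires: (i) an interior and boundary reverse Hölder / Meyers-type self-improvement for solutions of $\mathcal L_\varepsilon v = 0$ — but this is already available as the higher-integrability estimate \eqref{pri:2.16} in the interior, and the analogous statement up to the boundary of a Lipschitz domain follows from the boundary Caccioppoli inequality plus Gehring's lemma; and (ii) the key $L^p$ comparison estimate: on a ball $B=B(x_0,r)\subset 2B\subset\Omega$ (or a boundary chunk $\Delta_r$), writing $u_\varepsilon$ as the solution with right-hand side $F$, decompose $u_\varepsilon = w + \varphi$ where $\mathcal L_\varepsilon w = 0$ in $2B$ with $w=u_\varepsilon$ on $\partial(2B)$, and $\varphi$ absorbs $F$. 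By Theorem \ref{thm:2.1} applied on $2B$ (with scaling), $\big(\dashint_{2B}|\nabla\varphi|^2\big)^{1/2}\le Cr\big(\dashint_{2B}|F|^2\big)^{1/2}$, and more generally, since $\mathcal L_\varepsilon$ is monotone with the structure \eqref{a:2}, one gets $\big(\dashint_{2B}|\nabla\varphi|^2\big)^{1/2}\le Cr\big(\dashint_{2B}|F|^p\big)^{1/p}$ by Hölder once $p>2$ — actually one needs the $L^p$-version of $H^1$ theory here, i.e. $\|\nabla\varphi\|_{L^p}\le Cr\|F\|_{L^p}$ for the monotone operator with zero boundary data, which for $2\le p< p_0$ follows from \eqref{pri:2.16}, and for larger $p$ is bootstrapped through the iteration itself. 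Meanwhile $w$ satisfies the reverse Hölder inequality $\big(\dashint_{B}|\nabla w|^{q}\big)^{1/q}\le C\big(\dashint_{2B}|\nabla w|^2\big)^{1/2}$ for some fixed $q>p$ (Meyers exponent), which is the "good part" in Shen's $L^p$ machinery.

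With these in hand, I would invoke Shen's real-variable lemma \cite[Theorem 3.3 or its local version]{S3}: if for every ball $B$ with $|B|\lesssim|\Omega|$ one can write the gradient $|\nabla u_\varepsilon|\mathbf 1_{2B} \le |R_B| + |N_B|$ with $\big(\dashint_B |R_B|^{q}\big)^{1/q}\le C\big(\dashint_{2B}|\nabla u_\varepsilon|^2 + |F|^2\big)^{1/2}$ and $\big(\dashint_B|N_B|^2\big)^{1/2}\le C\big(\dashint_{2B}|F|^q\big)^{1/q}$ (here $N_B = |\nabla\varphi|$, $R_B = |\nabla w|$), then $\|\nabla u_\varepsilon\|_{L^p(\Omega)}\le C\big(\|\nabla u_\varepsilon\|_{L^2(\Omega)} + \|F\|_{L^p(\Omega)}\big)$ for every $2\le p<q$; the $L^2$-term on the right is then dispatched by Theorem \ref{thm:2.1}. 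Iterating upward (enlarging $q$ at each stage is not needed — the Meyers exponent $p_0$ may be small, so instead one iterates the conclusion: once $W^{1,p_1}$ is known for some $p_1<p_0$ with a norm bound in terms of $\|F\|_{L^{p_1}}$, one re-runs the decomposition with base exponent $p_1$ instead of $2$, gaining each time, and covers all $p<\infty$ after finitely many steps since $g=0$ has been arranged). The main obstacle is the uniformity in $\varepsilon$ of the reverse Hölder exponent for $\mathcal L_\varepsilon$-harmonic functions and of the comparison constants: here one must be careful that \eqref{pri:2.16} and its boundary analogue have constants depending only on $\mu_0,\mu_2,d,p$ and the Lipschitz character, not on $\varepsilon$ — which is true because Caccioppoli's inequality \eqref{pri:2.14} and the Sobolev--Poincaré inequality used to derive \eqref{pri:2.16} are $\varepsilon$-free. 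One must also check that the monotone structure survives the subtraction $u_\varepsilon = w+\varphi$; since $A$ is not linear this is slightly delicate — the resolution is that $\varphi$ solves, not $\mathcal L_\varepsilon\varphi = F$, but the equation $-\operatorname{div}\big[A(x/\varepsilon,\nabla w + \nabla\varphi) - A(x/\varepsilon,\nabla w)\big] = F$ with zero boundary data, whose leading operator $B_\varepsilon^w(x,z):= A(x/\varepsilon, \nabla w(x)+z) - A(x/\varepsilon,\nabla w(x))$ still satisfies \eqref{a:2} with the same constants by \eqref{a:2} itself; hence Theorem \ref{thm:2.1} (and \eqref{pri:2.16}) apply to $\varphi$ verbatim, giving the needed bound on $\nabla\varphi$ independent of $w$.
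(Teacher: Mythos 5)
Your proposal misreads the scope of Theorem~\ref{thm:2.2}: it is a Meyers-type higher-integrability result valid on a Lipschitz domain for \emph{some} small $p>2$ depending on $\mu_0,\mu_2,d$ and the Lipschitz character — not a $W^{1,p}$ estimate for all $p<\infty$ (that is Theorem~\ref{thm:1.3}, which crucially requires a $C^1$ domain and the homogenization machinery of Sections 4--5). Because you target the wrong theorem, you are pushed into an iteration that cannot work, and you import machinery the paper does not use here.

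The paper's proof is the classical Gehring/Meyers argument: extend $u_\varepsilon$ by $\tilde g$ and $F$ by $0$ across $\partial\Omega$, establish the reverse-H\"older precondition
\[
\dashint_{B(x,r)}|\nabla u_\varepsilon-\nabla\tilde g|^2
\leq C\Bigl(\dashint_{B(x,2r)}|\nabla u_\varepsilon-\nabla\tilde g|^{\frac{2d}{d+2}}\Bigr)^{\frac{d+2}{d}}
+C\dashint_{B(x,2r)}\bigl(r^2|F|^2+|\nabla\tilde g|^2\bigr)
\]
from Caccioppoli plus Sobolev--Poincar\'e (Lemma~\ref{lemma:2.2}), then apply Gehring's lemma on balls to get $L^{2(1+\delta)}$ integrability for $\delta\leq\delta_0$, and finish with a covering argument and the $H^1$ bound~\eqref{pri:2.12}. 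The only $p>2$ this produces is $p=2(1+\delta)$ with $\delta\leq\delta_0$, and that is exactly what the statement means and what Theorem~\ref{thm:3.2} later consumes. No Caffarelli--Peral decomposition, no comparison with $\mathcal L_\varepsilon$-harmonic functions, and no iteration appears anywhere in this proof.

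The genuine gap in your argument is the bootstrap claim that ``one re-runs the decomposition with base exponent $p_1$ instead of $2$, gaining each time, and covers all $p<\infty$.'' This fails for two reasons. First, the Caffarelli--Peral / Shen scheme requires, as its ``good part'' input, a reverse H\"older estimate $\bigl(\dashint_B|\nabla w|^q\bigr)^{1/q}\leq C\bigl(\dashint_{2B}|\nabla w|^2\bigr)^{1/2}$ for $\mathcal L_\varepsilon$-harmonic $w$ with $q$ \emph{strictly larger than the target} $p$. On a Lipschitz domain with coefficients that are merely measurable in $x$ (the periodicity is explicitly unused here, see the remark after Theorem~\ref{thm:2.2}), the only such $q$ available is the Meyers exponent $2(1+\delta_0)$ coming from the $L^2$-Caccioppoli inequality and Gehring. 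There is no $p_k$-level Caccioppoli inequality for a general monotone operator, so you cannot regenerate a reverse H\"older inequality at a higher base exponent, and the available $q$ never moves past $2(1+\delta_0)$. This is a hard limit for rough coefficients on Lipschitz domains, not a technical inconvenience. Second, your ``$L^p$-version of $H^1$ theory'' for the perturbation $\varphi$ at exponents above the Meyers range is precisely the conclusion you are trying to establish, so invoking it in the inductive step is circular; the whole point of Shen's real method is to avoid needing an $L^p$ bound on the error term.

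Your observations that are correct and worth keeping: the re-monotonization $B^w_\varepsilon(x,z)=A(x/\varepsilon,\nabla w+z)-A(x/\varepsilon,\nabla w)$ satisfies~\eqref{a:2} with the same constants, so the $H^1$ theory applies to $\varphi$ uniformly in $w$; and the constants in~\eqref{pri:2.14} and~\eqref{pri:2.16} are $\varepsilon$-independent, which is what gives uniformity. But these feed into Theorem~\ref{thm:4.1} and Theorem~\ref{thm:5.3} (where the reverse H\"older for the homogeneous equation with large $q$ is supplied by the homogenization-based Lipschitz and H\"older estimates~\eqref{f:4.10},~\eqref{pri:5.6}), not into Theorem~\ref{thm:2.2}. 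For the present theorem the direct Gehring argument is both necessary and sufficient.
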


\begin{lemma}[reverse H\"older inequality]\label{lemma:2.2}
Assume the same conditions as in Theorem $\ref{thm:2.1}$. Then, there exist
positive constants $\delta_0, R_0$ and $C$, depending on $\mu_0,\mu_2,d$ and the
character of $\Omega$, such that
$|\nabla u_\varepsilon|\in L^{p}_{loc}(\Omega)$ with $p=2(1+\delta_0)$
and, we have the following estimate
\begin{equation}\label{pri:2.15}
  \dashint_{\Omega(x,r)}|\nabla u_\varepsilon|^{2(1+\delta)}
  \leq
  C\Bigg\{
  \Big(\dashint_{\Omega(x,2r)}|\nabla u_\varepsilon|^2 \Big)^{1+\delta}
  + \dashint_{\Omega(x,2r)}|rF|^{2(1+\delta)}
  + \dashint_{B(x,2r)}|\nabla\tilde{g}|^{2(1+\delta)}\Bigg\}
\end{equation}
for any $\delta\in(0,\delta_0]$, $0<r<(R_0/4)$ and $x\in\overline{\Omega}$,
where $\Omega(x,r)=B(x,r)\cap\Omega$, and
$\tilde{g}$ is an extension of $g$ to $\mathbb{R}^d$ in the sense of $W^{1,p}$
norm.
\end{lemma}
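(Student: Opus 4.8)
The plan is to prove a local reverse Hölder inequality for $\nabla u_\varepsilon$ of Gehring type, and then to invoke the self-improving property of such inequalities, in the form given by Giaquinta \cite[Chapter V, Theorem 1.2]{GM}, to produce the exponent $p=2(1+\delta_0)>2$ together with the stated estimate for every $\delta\in(0,\delta_0]$. First I would reduce to a homogeneous boundary condition: fixing an extension $\tilde g\in W^{1,p}(\mathbb{R}^d)$ of $g$, the function $w_\varepsilon:=u_\varepsilon-\tilde g\in H_0^1(\Omega)$ is a weak solution of $-\text{div}\,A(x/\varepsilon,\nabla w_\varepsilon+\nabla\tilde g)=F$ in $\Omega$. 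Then, given $x\in\overline\Omega$ and $0<r<R_0/4$ (with $R_0$ the Lipschitz scale of $\Omega$), I would distinguish \emph{interior balls}, where $B(x,4r)\subset\Omega$, from \emph{boundary balls}, where $x$ is close to $\partial\Omega$.

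For interior balls this is exactly the interior Caccioppoli inequality of Lemma \ref{lemma:2.3}: with the test function $\phi=\psi^2(u_\varepsilon-c)$, $\psi$ a cut-off between $B(x,r)$ and $B(x,2r)$, the coerciveness and growth in \eqref{a:2} together with $A(\cdot,0)=0$ from \eqref{a:1} and Young's inequality give
\[
\dashint_{B(x,r)}|\nabla u_\varepsilon|^2\le \frac{C}{r^2}\inf_{c\in\mathbb{R}}\dashint_{B(x,2r)}|u_\varepsilon-c|^2+Cr^2\dashint_{B(x,2r)}|F|^2.
\]
For boundary balls I would instead take $\phi=\psi^2 w_\varepsilon=\psi^2(u_\varepsilon-\tilde g)$, which is admissible since it lies in $H_0^1(\Omega(x,2r))$; expanding $A(x/\varepsilon,\nabla u_\varepsilon)\cdot\nabla\phi$, writing $A(x/\varepsilon,\nabla u_\varepsilon)\cdot\nabla u_\varepsilon=\langle A(x/\varepsilon,\nabla u_\varepsilon)-A(x/\varepsilon,0),\nabla u_\varepsilon\rangle\ge\mu_0|\nabla u_\varepsilon|^2$ and $|A(x/\varepsilon,\nabla u_\varepsilon)|\le\mu_2|\nabla u_\varepsilon|$, and using Young's inequality yields the boundary Caccioppoli inequality
\[
\dashint_{\Omega(x,r)}|\nabla u_\varepsilon|^2\le \frac{C}{r^2}\dashint_{\Omega(x,2r)}|u_\varepsilon-\tilde g|^2+C\dashint_{\Omega(x,2r)}|\nabla\tilde g|^2+Cr^2\dashint_{\Omega(x,2r)}|F|^2.
\]

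Next I would upgrade these "$L^2$ on both sides" estimates to a genuine reverse Hölder inequality with the sub-critical exponent $q=\tfrac{2d}{d+2}<2$ on the right. For interior balls this is the Sobolev--Poincaré inequality applied with $c=\dashint_{B(x,2r)}u_\varepsilon$, as already recorded in \eqref{pri:2.17}. For boundary balls one uses instead that $w_\varepsilon$, extended by $0$ outside $\Omega$, vanishes on a subset of $B(x,2r)$ of positive relative measure; this is precisely where the Lipschitz character of $\Omega$ enters, through the measure density estimate $|B(x,2r)\setminus\Omega|\ge c_0 r^d$ uniform in $x\in\partial\Omega$ and $0<r<R_0$, so that the Sobolev inequality $\big(\dashint_{B(x,2r)}|w_\varepsilon|^{2^*}\big)^{1/2^*}\le Cr\big(\dashint_{B(x,2r)}|\nabla w_\varepsilon|^q\big)^{1/q}$ holds \emph{without} subtracting a constant, and Hölder absorbs $|\nabla\tilde g|$ into the $L^q$ term. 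Combining with the two Caccioppoli inequalities and a standard covering of $\Omega(x,2r)$ by interior and boundary balls of comparable size gives, for all $x\in\overline\Omega$ and $0<r<R_0/4$,
\[
\Big(\dashint_{\Omega(x,r)}|\nabla u_\varepsilon|^2\Big)^{1/2}\le C\Big(\dashint_{\Omega(x,2r)}\big(|\nabla u_\varepsilon|+|\nabla\tilde g|\big)^q\Big)^{1/q}+C\Big(\dashint_{\Omega(x,2r)}|rF|^2\Big)^{1/2}.
\]

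Finally, this is exactly the hypothesis of Gehring's lemma in Giaquinta's form \cite[Chapter V, Theorem 1.2]{GM}: a reverse Hölder inequality $\dashint_{B_r}h^2\le C(\dashint_{B_{2r}}h^q)^{2/q}+\dashint_{B_{2r}}f^2$ with $q<2$ self-improves to $\dashint_{B_r}h^{2(1+\delta)}\le C(\dashint_{B_{2r}}h^2)^{1+\delta}+C\dashint_{B_{2r}}f^{2(1+\delta)}$ for some $\delta_0>0$ and all $\delta\in(0,\delta_0]$, provided $f\in L^{2(1+\delta_0)}_{loc}$. Applying it with $h=|\nabla u_\varepsilon|+|\nabla\tilde g|$ and $f=r|F|$, and shrinking $\delta_0$ so that $2(1+\delta_0)\le p$ (legitimate since $F\in L^p$ and $\nabla\tilde g\in L^p$), yields $|\nabla u_\varepsilon|\in L^{2(1+\delta_0)}_{loc}(\Omega)$ and the desired estimate \eqref{pri:2.15}. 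The main obstacle is the boundary analysis: one must check that the Lipschitz regularity of $\partial\Omega$ is precisely what is needed to run the Sobolev inequality for $w_\varepsilon$ without a correcting constant, uniformly in $x$ and $r$, and to keep all constants in the boundary Caccioppoli inequality and in the covering argument independent of $\varepsilon$. The monotone, uniformly Lipschitz structure of $A$ in \eqref{a:2} together with $A(\cdot,0)=0$ makes the energy manipulations formally identical to the linear case, so the nonlinearity introduces no additional difficulty here.
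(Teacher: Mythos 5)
Your overall strategy matches the paper's: reduce to $w_\varepsilon=u_\varepsilon-\tilde g\in H_0^1(\Omega)$, prove interior and boundary Caccioppoli inequalities, upgrade to a reverse H\"older inequality with the sub-critical exponent $\tfrac{2d}{d+2}$ via Sobolev--Poincar\'e (using that $w_\varepsilon$ vanishes across $\partial\Omega$, so no constant need be subtracted), and invoke Giaquinta's version of Gehring's lemma. The paper packages the boundary step slightly differently -- it extends $u_\varepsilon=\tilde g$ and $F=0$ to $\mathbb{R}^d\setminus\overline\Omega$ so that all estimates live on genuine balls $B(x,r)$ rather than $\Omega(x,r)$, which is what \cite[Ch.\ V, Thm.\ 1.2]{GM} requires, and it recovers $\Omega(x,r)$ at the end via the interior measure density $c_0|B(x,r)|\le|\Omega(x,r)|$ -- but the content is the same, and your appeal to the exterior measure density for the boundary Sobolev inequality is just the flip side of the same Lipschitz geometry.

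However, there is a concrete slip in the final Gehring application that you should fix. You claim to apply the lemma with $h=|\nabla u_\varepsilon|+|\nabla\tilde g|$ and $f=r|F|$, but what you actually derived is
\[
\Big(\dashint_{\Omega(x,r)}|\nabla u_\varepsilon|^2\Big)^{1/2}\le C\Big(\dashint_{\Omega(x,2r)}\big(|\nabla u_\varepsilon|+|\nabla\tilde g|\big)^q\Big)^{1/q}+C\Big(\dashint_{\Omega(x,2r)}|rF|^2\Big)^{1/2},
\]
with different functions on the two sides. To turn this into a genuine reverse H\"older hypothesis for $h$, you would need to add $\dashint_{\Omega(x,r)}|\nabla\tilde g|^2$ to both sides, and then you must control the added term on the right by $\big(\dashint_{\Omega(x,2r)}|\nabla\tilde g|^q\big)^{2/q}$; but since $q<2$, H\"older runs the \emph{other} way ($\big(\dashint|\nabla\tilde g|^q\big)^{1/q}\le\big(\dashint|\nabla\tilde g|^2\big)^{1/2}$), so this does not close. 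Moreover $|\nabla\tilde g|$ has no reason to satisfy a self-improving reverse H\"older inequality -- it is just a given $L^p$ function. The correct bookkeeping, as in the paper, is to apply Gehring to $G=|\nabla u_\varepsilon-\nabla\tilde g|$ (extended by zero outside $\Omega$, so $G=\nabla w_\varepsilon$ globally) with the full data $\tilde F\simeq r|F|+|\nabla\tilde g|$ placed in the $f$-slot: the Caccioppoli/Sobolev--Poincar\'e chain gives
\[
\dashint_{B(x,r)}|G|^2\le C\Big(\dashint_{B(x,2r)}|G|^{\frac{2d}{d+2}}\Big)^{\frac{d+2}{d}}+C\dashint_{B(x,2r)}\big(r^2|F|^2+|\nabla\tilde g|^2\big),
\]
and Gehring then yields $G\in L^{2(1+\delta_0)}_{loc}$ with the stated bound, from which \eqref{pri:2.15} for $|\nabla u_\varepsilon|$ follows by the triangle inequality and $\tilde g\in W^{1,2(1+\delta_0)}$. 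In short: $|\nabla\tilde g|$ belongs in the data term that passes through Gehring's lemma (entering at $L^2$, exiting at $L^{2(1+\delta)}$), not in the function satisfying the reverse H\"older inequality. With that correction, your argument is the paper's argument.
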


\begin{proof}
The proof is based upon reverse H\"older inequality
(see for example \cite[Chapter V, Theorem 1.2]{GM}), and the main idea
may be found in \cite[Lemma 4.1]{F}.
Since the reverse H\"older type inequalities involved
are verified only on cubes or balls, let $u_\varepsilon = \tilde{g}$
and $F = 0$
in $\mathbb{R}^d\setminus\overline{\Omega}$  for the convenience.
Fixed a ball $B(x_0,R_0/2)\in\mathbb{R}^d$ with $x_0\in\partial\Omega$,
for any $x\in B(x_0,R_0/2)$ and
$B(x,r)\subset B(x_0,R_0/4)$,  we plan to establish
\begin{equation}\label{f:2.13}
\dashint_{B(x,r)}|\nabla u_\varepsilon - \nabla \tilde{g}|^2
\leq C\Big(\dashint_{B(x,2r)}
|\nabla u_\varepsilon - \nabla \tilde{g}|^{\frac{2d}{d+2}}\Big)^{\frac{d+2}{d}}
+C\dashint_{B(x,2r)}\big(r^2|F|^2+|\nabla\tilde{g}|^2\big),
\end{equation}
where $C$ depends on $\mu_0,\mu_2$ and $d$. Define
\begin{equation}\label{def:2.4}
\tilde{F}(y) = C|F(y)|\text{dist}(y,M_{3r}^{4r}(x))
+ C|\nabla\tilde{g}(y)|
\end{equation}
for $y\in B(x,2r)$, and $M_{3r}^{4r}(x) = B(x,4r)\setminus B(x,3r)$.
Thus, it follows from \cite[Chapter V, Theorem 1.2]{GM} that there exits
a small constant $\delta_0>0$ such that, for any $\delta\in(0,\delta_0]$,
\begin{equation*}
\begin{aligned}
\dashint_{B(x,r)}|\nabla u_\varepsilon - \nabla \tilde{g}|^{2(1+\delta)}
&\leq C\Big(\dashint_{B(x,2r)}
|\nabla u_\varepsilon - \nabla \tilde{g}|^2\Big)^{1+\delta}
+C\dashint_{B(x,2r)}|\tilde{F}|^{(1+\delta)}\\
&\leq C\Big(\dashint_{B(x,2r)}
|\nabla u_\varepsilon - \nabla \tilde{g}|^2\Big)^{1+\delta}
+C\dashint_{B(x,2r)}\Big(|rF|^{2(1+\delta)}
+|\nabla\tilde{g}|^{2(1+\delta)}\Big).
\end{aligned}
\end{equation*}
By noting that there exists a constant $c_0\in(0,1)$, such that
$c_0|B(x,r)|\leq |\Omega(x,r)|\leq |B(x,r)|$ for any $x\in\overline{\Omega}$ and
$0<r<(R_0/4)$, a routine computation leads to the desired estimate
$\eqref{pri:2.15}$. The proof is reduced to show the estimate $\eqref{f:2.13}$.
In the case of $B(x,r)\subset\mathbb{R}^d\setminus\overline{\Omega}$, the left-hand
side of $\eqref{f:2.13}$ will vanish and there is nothing to prove.

We now turn to the case $B(x,3r/2)\subset\Omega$, and it follows from
the interior estimate $\eqref{pri:2.17}$ that
\begin{equation*}
\begin{aligned}
\dashint_{B(x,r)}|\nabla u_\varepsilon - \nabla\tilde{g}|^2
&\leq 2\dashint_{B(x,r)}|\nabla u_\varepsilon|^2
+ 2\dashint_{B(x,r)}|\nabla\tilde{g}|^2 \\
&\leq C\Big(\dashint_{B(x,2r)}|\nabla u_\varepsilon
-\nabla\tilde{g}|^{\frac{2d}{d+2}}\Big)^{\frac{d+2}{d}}
+ Cr^2\dashint_{B(x,2r)}|F|^2 + C\dashint_{B(x,2r)}|\nabla\tilde{g}|^2,
\end{aligned}
\end{equation*}
where we also use H\"older's inequality in the last step. The third case is
$B(x.3r/2)\cap\partial\Omega \not=\varnothing$. Rewriting $\eqref{pde:2.4}$ as
\begin{equation}\label{pde:2.5}
\int_{\Omega(x,2r)}A(x/\varepsilon,\nabla u_\varepsilon)\cdot\nabla\phi dx
= \int_{\Omega(x,2r)} F\phi dx
\end{equation}
for any $\phi\in H_0^1(\Omega(x,2r))$,
we consider put $\phi=\psi_{r}^2(u_\varepsilon-\tilde{g})$
into the above equation, where $\phi_r\in C_0^1(B(x,2r))$ is a cut-off function.
\begin{equation*}
\begin{aligned}
\text{LHS~of~}  \eqref{pde:2.5}~ &= \int_{\Omega(x,2r)}\psi_r^2 A(z/\varepsilon,\nabla u_\varepsilon)
\cdot \nabla(u_\varepsilon-\tilde{g}) dz
+ 2\int_{\Omega(x,2r)}\psi_r A(z/\varepsilon,\nabla u_\varepsilon)
\nabla\psi_{r}(u_\varepsilon-\tilde{g}) dz\\
&\geq \frac{\mu_0}{2}\int_{\Omega(x,2r)}
\psi_r^2 |\nabla u_\varepsilon - \nabla\tilde{g}|^2 dz
-\frac{C}{r^2}\int_{\Omega(x,2r)}|u_\varepsilon - \tilde{g}|^2 dz
-C\int_{B(x,2r)}|\nabla\tilde{g}|^2 dz,
\end{aligned}
\end{equation*}
where we use the assumptions $\eqref{a:1}$, $\eqref{a:2}$ and $\eqref{a:3}$,
as well as  Young's inequality.  Moreover, we have
\begin{equation}\label{f:2.14}
 \text{LHS}
 \geq \frac{\mu_0}{2}\int_{\Omega(x,r)}
 |\nabla u_\varepsilon - \nabla\tilde{g}|^2 dz
-\frac{C}{r^2}\Big(\int_{\Omega(x,2r)}|\nabla u_\varepsilon -
\nabla\tilde{g}|^{\frac{2d}{d+2}} dz\Big)^{\frac{d+2}{d}}
-C\int_{B(x,2r)}|\nabla\tilde{g}|^2 dz,
\end{equation}
where we use the Sobolev-Poincar\'e inequality in the computation.
Similarly, the right-hand side of $\eqref{pde:2.5}$ is controlled  by
\begin{equation*}
\frac{\mu_0}{4}\int_{\Omega(x,r)}
 |\nabla u_\varepsilon - \nabla\tilde{g}|^2 dz
+ Cr^2\int_{\Omega(x,2r)}|F|^2 dz
+ \frac{C}{r^2}\Big(\int_{\Omega(x,2r)}|\nabla u_\varepsilon -
\nabla\tilde{g}|^{\frac{2d}{d+2}} dz\Big)^{\frac{d+2}{d}}.
\end{equation*}

Finally, this together with $\eqref{f:2.14}$ implies the desired estimate
$\eqref{f:2.13}$, in which we note that
$|\nabla u_\varepsilon - \nabla \tilde{g}|=0$ on $B(x,r)
\setminus\overline{\Omega}$. We have completed the whole proof.
\end{proof}

\noindent\textbf{Proof of Theorem $\ref{thm:2.2}$}.
The proof is based upon Lemma $\ref{lemma:2.2}$.
Note that the estimate $\eqref{pri:2.13}$ is in fact unified
the interior estimate $\eqref{pri:2.16}$ and the related boundary estimate. Let
$\delta,R_0$ be given as in Lemma $\ref{lemma:2.2}$, and we have
\begin{equation*}
\int_{\Omega(x,r)}|\nabla u_\varepsilon|^{2(1+\delta)}
\leq Cr^{-d\delta}\Big(\int_{\Omega(x,2r)}|\nabla u_\varepsilon|^2\Big)^{1+\delta}
+ C\int_{\Omega(x,2r)}|rF|^{2(1+\delta)}
+ C\int_{\Omega(x,2r)}|\nabla\tilde{g}|^{2(1+\delta)}
\end{equation*}
for any $x\in\overline{\Omega}$ and $0<r<(R_0/4)$. On account of a covering argument, the
above estimate implies
\begin{equation*}
\begin{aligned}
\int_{\Omega}|\nabla u_\varepsilon|^{2(1+\delta)}
&\leq Cr^{-d\delta}\Big(\int_{\Omega}|\nabla u_\varepsilon|^2\Big)^{1+\delta}
+ C\int_{\Omega}|rF|^{2(1+\delta)}
+ C\int_{\Omega}|\nabla\tilde{g}|^{2(1+\delta)}\\
&\leq C(r_0/r)^{d\delta}r_0^{2(1+\delta)}\int_{\Omega}|F|^{2(1+\delta)}
+ C\big[(r_0/r)^{d\delta}+1\big]\int_{\Omega}|\nabla\tilde{g}|^{2(1+\delta)},
\end{aligned}
\end{equation*}
where we employ H\"older's inequality and the estimate $\eqref{f:2.11}$ in the second step.
In fact, we may choose $r\in(0,R_0/4)$ such that the radius $r$ and $r_0$ are comparable, and
by setting $p=2(1+\delta)$, we will obtain the stated estimate $\eqref{pri:2.12}$, in which
the estimate $\|\tilde{g}\|_{W^{1,p}(\tilde{\Omega})}
\leq C\|g\|_{W^{1-1/p,p}(\partial\Omega)}$ is used and $\tilde{\Omega}\supseteq\Omega$.
We have completed the proof.
\qed

\begin{remark}
\emph{In Theorems $\ref{thm:2.1}$, $\ref{thm:2.2}$,
we do not use the periodicity condition in the proof.}
\end{remark}

\begin{thm}[$H^2$ theory]\label{thm:2.3}
Let $\Omega$ be a bounded $C^{1,1}$ domain.
Given $g\in H^{3/2}(\partial\Omega)$ and $F\in L^2(\Omega)$,
assume that
$u_0\in H^1(\Omega)$ is the weak solution of
$\mathcal{L}_0 u_0 = F$ in $\Omega$ with $u_0 = g$ on $\partial\Omega$.
Then we have $u_0\in H^2(\Omega)$ satisfying
\begin{equation}\label{pri:2.11}
 \|\nabla^2 u_0\|_{L^2(\Omega)}
 \leq C\Big\{\|F\|_{L^2(\Omega)}+\|g\|_{H^{3/2}(\partial\Omega)}\Big\}
\end{equation}
where $C$ depends on $\mu_0,\mu_2,d$ and the character of $\Omega$.
\end{thm}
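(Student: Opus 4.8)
The plan is to establish the $H^2$ estimate $\eqref{pri:2.11}$ for the effective equation $\mathcal{L}_0 u_0 = F$ by treating it as a nonlinear elliptic equation whose linearization is uniformly elliptic, so that the classical second-derivative theory for linear equations in divergence form with measurable coefficients (in the $L^2$ framework, this is really the difference-quotient method) applies. The crucial structural input is $\eqref{a:4}$: the matrix $\nabla\widehat{A}(z)$ exists a.e.\ and satisfies $\sum_{i,j}\nabla_j\widehat{A}_i(z)\xi_j\xi_i \geq \mu_0|\xi|^2$, together with the Lipschitz bound $|\widehat{A}(\xi)-\widehat{A}(\xi')|\leq C|\xi-\xi'|$ from the second line of $\eqref{pri:2.3}$, which gives $|\nabla\widehat{A}(z)|\leq C$ a.e. Thus $\widehat{A}$ is a Lipschitz, strongly monotone vector field, and $u_0$ solves a quasilinear equation of the type for which $H^2$ regularity is standard.

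First I would reduce to the homogeneous-boundary case: pick $G\in H^2(\Omega)$ with $G = g$ on $\partial\Omega$ and $\|G\|_{H^2(\Omega)}\leq C\|g\|_{H^{3/2}(\partial\Omega)}$ (possible since $\Omega$ is $C^{1,1}$), and set $v = u_0 - G$, so $v\in H^1_0(\Omega)$ satisfies $-\operatorname{div}\widehat{A}(\nabla v + \nabla G) = F$. It suffices to bound $\|\nabla^2 v\|_{L^2(\Omega)}$ in terms of $\|F\|_{L^2(\Omega)} + \|G\|_{H^2(\Omega)}$. Next I would run the tangential difference-quotient argument: for a direction $e_k$ tangent to a flattened piece of the boundary (and in all directions in the interior), test the weak formulation with $\phi = -\tau_{-h}\tau_h v$ where $\tau_h w(x) = (w(x+he_k)-w(x))/h$ is the difference quotient, localized by a cutoff. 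Using the monotonicity in the form
\begin{equation*}
\big\langle \widehat{A}(\nabla u_0(x+he_k)) - \widehat{A}(\nabla u_0(x)),\ \nabla u_0(x+he_k)-\nabla u_0(x)\big\rangle \geq \mu_0\,|\nabla u_0(x+he_k)-\nabla u_0(x)|^2,
\end{equation*}
one obtains $\mu_0\|\tau_h\nabla v\|_{L^2}^2 \leq$ (terms involving $\|\tau_h \nabla G\|_{L^2}$, $\|F\|_{L^2}$, and lower-order commutator terms from the cutoff controlled by $\|\nabla v\|_{L^2}$), and the $H^1$ bound $\|\nabla u_0\|_{L^2(\Omega)}\leq C\{\|F\|_{L^2}+\|g\|_{H^{1/2}}\}$ from Theorem $\ref{thm:2.1}$ (applied with $\mathcal{L}_0$, which satisfies $\eqref{pri:2.3}$) closes the lower-order terms. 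Letting $h\to 0$ gives $\|\partial_k\nabla v\|_{L^2}\leq C\{\ldots\}$ for all tangential $k$. Finally the missing purely normal second derivative $\partial_{NN} v$ is recovered from the equation itself: writing $\widehat{A}_i(\nabla u_0)$ and differentiating, the equation $\partial_i \widehat{A}_i(\nabla u_0) = -F$ expresses the coefficient $a_{NN}(x):=\nabla_N\widehat{A}_N(\nabla u_0(x))$ times $\partial_{NN}v$ as a combination of $F$, already-controlled tangential second derivatives, and $\nabla^2 G$; since $a_{NN}\geq \mu_0 > 0$ by $\eqref{a:4}$, we may divide, completing the estimate. A partition of unity patching interior balls with boundary charts then yields $\eqref{pri:2.11}$ globally.

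The main obstacle is the low regularity of the coefficients of the linearized operator: $\nabla\widehat{A}$ is only bounded and measurable (in $z$), and its composition $x\mapsto \nabla\widehat{A}(\nabla u_0(x))$ need not be continuous, so one cannot invoke Schauder or even classical $W^{2,2}$ theory for continuous-coefficient equations directly. This is precisely why I would phrase the argument at the level of difference quotients applied to the monotone equation rather than to a frozen linearization — the monotonicity inequality $\eqref{pri:2.3}_1$ is exactly the tool that survives the lack of coefficient regularity, and the Lipschitz bound $\eqref{pri:2.3}_2$ keeps all error terms linear in $\nabla^2 v$ with small constants absorbable by Young's inequality. A secondary technical point is the boundary flattening: in $C^{1,1}$ coordinates the flattened equation acquires bounded measurable first-order perturbations, but these are harmless because they too are controlled by the $H^1$ bound already in hand; one simply has to be careful that the change of variables is $C^{1,1}$ so that second derivatives of the diffeomorphism are only $L^\infty$, matching the regularity budget. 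I expect the proof in the paper to either cite a standard reference for quasilinear $H^2$ theory (e.g.\ Giaquinta–Martinazzi or Giusti) applied to $\mathcal{L}_0$, or to sketch exactly this difference-quotient computation.
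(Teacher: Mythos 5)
Your proposal follows essentially the same route as the paper: tangential second derivatives are controlled by testing the (differenced or formally differentiated) equation against a cutoff times a tangential derivative, using the strong monotonicity/linearized ellipticity $\eqref{a:4}$ to absorb the leading term; the purely normal second derivative is then recovered algebraically from the PDE via the lower bound $\nabla_{\xi_d}\widehat{A}_d \geq \mu_0$; and the general $C^{1,1}$ domain is treated by boundary flattening and a partition of unity. The paper works at the formal level -- it "assumes $u_0\in H^2$," differentiates the equation to get $\eqref{pde:2.1}$, and tests with $\phi=\psi^2\nabla_k u_0$ -- which is exactly the $h\to 0$ limit of your difference-quotient computation, so the two are interchangeable; your version is the rigorous one, while the paper's streamlines the exposition. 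The one genuine divergence is the handling of inhomogeneous $g$: you subtract an $H^2$ extension $G$ of $g$ once at the outset and then run the argument on $v=u_0-G\in H_0^1$, whereas the paper keeps $u_0$ and instead, after differentiating, subtracts a harmonic extension $Z$ of $\widetilde{\nabla_k g}$ so that $w=\nabla_k u_0 - Z$ vanishes on the flat boundary piece, then applies a Caccioppoli inequality ($\eqref{pri:5.1}$) to the resulting linear equation $\eqref{pde:2.3}$. Both close the estimate at the same cost $\|g\|_{H^{3/2}}$; your decomposition is a bit cleaner to state, while the paper's harmonic-lifting of $\nabla_k g$ fits more naturally with the half-ball reduction it sets up. Either way the argument is sound, and your identification of the key obstacle -- that $x\mapsto\nabla\widehat{A}(\nabla u_0(x))$ is only bounded measurable, so one must argue via monotonicity/difference quotients rather than a frozen-coefficient Schauder theory -- is exactly what makes $\eqref{a:4}$ and $\eqref{pri:2.3}$ the right hypotheses.
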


\begin{proof}
The main idea of the proof is standard,
and we provide a proof for the sake of the completeness.
The interior $H^2$ estimate follows from \cite[pp.46]{GM},
while we focus on the boundary estimate and first
study the special case $\Omega = B^{+}(0,R)=B(0,R)\cap\mathbb{R}^d_{+}$ with $R>0$, and $g=0$.
For the ease of the statement, it is fine to assume $u_0\in H^2(B^+(0,R))$,  and we have
\begin{equation}\label{pde:2.1}
\left\{\begin{aligned}
&\int_{B^{+}(0,R)}\nabla_{\xi_j}A^i(\nabla u_0)\nabla^2_{jk}u_0\nabla_i\phi dx &=
-\int_{B^{+}(0,R)} F \nabla_k\phi dx,  ~~\\
&\quad\nabla_k u_0 = 0 \qquad \text{on}~~ T(0,R)
\end{aligned}\right.
\end{equation}
for any $\varphi\in H^1_0(B^+(0,R))$,
where $k=1,2,\cdots,d-1$, and $T(0,R) = B(0,R)\cap\{x\in\mathbb{R}^d:x_d=0\}$.
Let $\phi = \psi^2 \nabla_k u_0$, where
$\psi\in C^1_0(B(0,R))$ is a cut off function satisfying
$\psi = 1$ in $B(0,R/2)$ and $\psi=0$ outside B(0,2R/3) with
$|\nabla\psi|\leq C/R$. Then it follows from the condition $\eqref{a:4}$ that
\begin{equation*}
\int_{B^+(0,R/2)}|\nabla \nabla_k u_0|^2 dx
\leq \frac{C}{R^2}\int_{B^{+}(0,R)}|\nabla u_0|^2 dx
+ C\int_{B^{+}(0,R)}|F|^2 dx.
\end{equation*}
On the other hand, we observe that
\begin{equation}\label{pde:2.2}
- \nabla_{\xi_j}A^{i}(\nabla u_0)\nabla^2_{ij} u_0 = F
 \qquad \text{in}~~B^{+}(0,R),
\end{equation}
which implies
\begin{equation*}
  |\nabla_{dd}^2 u_0| \leq \frac{1}{\mu_0}\Big\{|F|
  + C\sum_{1\leq i\leq d\atop 1\leq j\leq d-1}|\nabla_{ij}^2 u_0|\Big\},
\end{equation*}
where we use the condition $\eqref{pri:2.3}$. By the above formula, we have
\begin{equation*}
\int_{B^+(0,R/2)}|\nabla^2 u_0|^2 dx
\leq \frac{C}{R^2}\int_{B^{+}(0,R)}|\nabla u_0|^2 dx
+ C\int_{B^{+}(0,R)}|F|^2 dx.
\end{equation*}

Next, we handle the case of $g\not=0$. To do so, we construct a Lipschitz domain
$\tilde{\Omega}$ such that $T(0,2R)\subset\partial\tilde{\Omega}$
and $B^+(0,R)\subset \tilde{\Omega}$,
and an extension of $\nabla_kg$ in the sense of $H^{1/2}$ norm, denoted by
$\widetilde{\nabla_k g}$, such that
$\|\widetilde{\nabla_k g}\|_{H^1(\partial\tilde{\Omega})}\leq
C\|\nabla_k g\|_{H^{1/2}(T(0,T))}$, where $k=1,\cdots, d-1$. Then there exists
$Z\in H^1(\tilde{\Omega})$ satisfying $\Delta z = 0$ in $\tilde{\Omega}$ with
$Z= \widetilde{\nabla_k g}$ on $\partial\tilde{\Omega}$, and one may derive that
\begin{equation}\label{f:2.7}
\|\nabla Z\|_{L^2(B^{+}(0,R))}
\leq C\|\nabla_k g\|_{H^{1/2}(T(0,R))},
\end{equation}
where $C$ is independent of $R$. Let $w = \nabla_k u_0 -z$, and in view of
$\eqref{pde:2.1}$ we have
\begin{equation}\label{pde:2.3}
\left\{\begin{aligned}
-\text{div}(\nabla_{\xi_j}A(\nabla u_0)\nabla_jw)
&= \text{div}(\tilde{F}+\nabla_{\xi_j}A(\nabla u_0)\nabla_jZ)
&\quad&\text{in}~~ B^+(0,R), \\
w &=0 &\quad&\text{on}~~T(0,R),
\end{aligned}\right.
\end{equation}
where $\tilde{F} = Fe_k$.
Then it follows from Caccioppoli's inequality $\eqref{pri:5.1}$ that
\begin{equation*}
\int_{B^+(0,R/2)}|\nabla \nabla_k u_0|^2dx
\leq \frac{C}{R^2}\int_{B^+(0,R)}|\nabla u_0|^2 dx
+ C \Big\{\int_{B^+(0,R)}|F|^2 dx + \|g\|^2_{H^{3/2}(T(0,R))}\Big\},
\end{equation*}
and this together with $\eqref{pde:2.2}$ gives
\begin{equation}\label{f:2.5}
\|\nabla^2 u_0\|_{L^2(B^{+}(0,R/2))}
\leq CR^{-1}\|\nabla u_0\|_{L^2(B^{+}(0,R))}
+C\Big\{\|F\|_{L^2(B^{+}(0,R))} + \|g\|_{H^{3/2}(T(0,R))}\Big\},
\end{equation}
where $C$ is independent of $R$. Now we have
figured out the right space that the boundary data $g$ belongs to.
The remainder of the proof is to employ
the so-called straightening the boundary arguments to handle
the case of a general $C^{1,1}$ domain, and
proceeding the proof is too complicated to be given here.
We have completed the proof.
\end{proof}

\section{Convergence rates}

\begin{lemma}\label{lemma:3.1}
Let $\Omega\subset\mathbb{R}^d$ be a bounded Lipschitz domain.
Suppose that $u_\varepsilon, u_0\in H^1(\Omega)$
satisfy $\mathcal{L}_\varepsilon u_\varepsilon = \mathcal{L}_0 u_0$ in
$\Omega$. For any $\varphi\in H^1_0(\Omega;\mathbb{R}^d)$,
the first-order approximating corrector is given by
\begin{equation}\label{eq:3.1}
 v_\varepsilon(x) = u_0(x) + \varepsilon N(x/\varepsilon,\varphi).
\end{equation}
Then for any $\phi\in H_0^1(\Omega)$ we have
\begin{equation}\label{pri:2.5}
\begin{aligned}
&\int_{\Omega} \big(A(x/\varepsilon,\nabla u_\varepsilon) - A(x/\varepsilon,\nabla v_\varepsilon)\big)
\cdot \nabla\phi dx\\
&\leq C\int_\Omega\Big(
|\nabla u_0 - \varphi| + \varepsilon|\nabla_\xi N(x/\varepsilon,\varphi)\nabla\varphi|
+\varepsilon|\nabla_\xi E(x/\varepsilon,\varphi)\nabla \varphi|\Big)|\nabla\phi| dx,
\end{aligned}
\end{equation}
where $C$ depends only on $\mu_2$ and $d$.
\end{lemma}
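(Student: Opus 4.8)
The plan is to expand the difference $A(x/\varepsilon,\nabla u_\varepsilon)-A(x/\varepsilon,\nabla v_\varepsilon)$ by inserting the intermediate quantity $A(x/\varepsilon,\varphi+\nabla_y N(x/\varepsilon,\varphi))$, which (after using the equation for $u_\varepsilon$ and $u_0$) should produce the ``correct'' flux $\widehat{A}(\varphi)$ modulo the flux corrector $E$, and to bound the remaining pieces by the growth condition in \eqref{a:2}. First I would compute $\nabla v_\varepsilon = \nabla u_0 + (\nabla_y N)(x/\varepsilon,\varphi) + \varepsilon (\nabla_\xi N)(x/\varepsilon,\varphi)\nabla\varphi$, so that the Lipschitz bound $|A(y,z)-A(y,z')|\le\mu_2|z-z'|$ immediately controls
\[
\big|A(x/\varepsilon,\nabla v_\varepsilon)-A(x/\varepsilon,\varphi+\nabla_y N(x/\varepsilon,\varphi))\big|\le \mu_2\big(|\nabla u_0-\varphi|+\varepsilon|\nabla_\xi N(x/\varepsilon,\varphi)\nabla\varphi|\big),
\]
which already accounts for the first two terms on the right-hand side of \eqref{pri:2.5}.

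The heart of the argument is the remaining term
\[
\int_\Omega\big(A(x/\varepsilon,\nabla u_\varepsilon)-A(x/\varepsilon,\varphi+\nabla_y N(x/\varepsilon,\varphi))\big)\cdot\nabla\phi\,dx.
\]
Here I would use that $\mathcal{L}_\varepsilon u_\varepsilon=\mathcal{L}_0 u_0$ in the weak sense, so $\int_\Omega A(x/\varepsilon,\nabla u_\varepsilon)\cdot\nabla\phi=\int_\Omega \widehat{A}(\nabla u_0)\cdot\nabla\phi$ for every $\phi\in H^1_0(\Omega)$. Then, writing $b(y,\varphi)=A(y,\varphi+\nabla_y N(y,\varphi))-\widehat{A}(\varphi)$ as in the flux-corrector lemma, the integral becomes
\[
\int_\Omega\big(\widehat{A}(\nabla u_0)-\widehat{A}(\varphi)\big)\cdot\nabla\phi\,dx-\int_\Omega b(x/\varepsilon,\varphi)\cdot\nabla\phi\,dx.
\]
The first of these is $\le C|\nabla u_0-\varphi||\nabla\phi|$ by the Lipschitz bound for $\widehat{A}$ in \eqref{pri:2.3}. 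For the second, I would invoke $b_i(y,\varphi)=\partial_{y_j}E_{ji}(y,\varphi)$ with $E_{ji}=-E_{ij}$ and the standard skew-symmetry trick: since $\varepsilon\partial_{x_j}\{E_{ji}(x/\varepsilon,\varphi)\}=b_i(x/\varepsilon,\varphi)+\varepsilon(\nabla_\xi E_{ji})(x/\varepsilon,\varphi)\partial_{x_j}\varphi$, integrating by parts and using $\partial_i\partial_j(\phi\,\cdot)$ against the antisymmetric $E_{ji}$ kills the leading term, leaving exactly $\int_\Omega \varepsilon(\nabla_\xi E)(x/\varepsilon,\varphi)\nabla\varphi\cdot\nabla\phi\,dx$, which is the third term on the right of \eqref{pri:2.5}. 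Collecting all pieces and using $\mu_1,\tau$ only implicitly through the already-established corrector bounds gives the claim with $C=C(\mu_2,d)$.

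The main obstacle is the rigorous justification of the skew-symmetry integration by parts when $\varphi$ is merely $H^1_0(\Omega;\mathbb{R}^d)$ and $E_{ji}(\cdot,\varphi)$ is a composition: one must check that $E_{ji}(x/\varepsilon,\varphi(x))$ and its weak gradient (which by the chain rule involves $(\nabla_y E_{ji})(x/\varepsilon,\varphi)$ and $\varepsilon^{-1}$-free terms $(\nabla_\xi E_{ji})(x/\varepsilon,\varphi)\nabla\varphi$) are genuinely in $L^2$, so that the antisymmetric cancellation $\sum_{i,j}\partial_i\partial_j(\cdot)E_{ji}=0$ is legitimate as a distributional identity rather than a formal one. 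This is handled by the bounds $\eqref{pri:2.1}$, $\eqref{pri:2.4}$ on $N$ and $E$ together with the periodicity, exactly as in the linear case (cf. \cite{S4,ZVVPSE1}); no new estimate beyond those already proved is required, and the rest is a routine regrouping of terms.
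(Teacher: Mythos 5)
Your proposal is correct and follows essentially the same route as the paper's own proof: you use $\mathcal{L}_\varepsilon u_\varepsilon = \mathcal{L}_0 u_0$ to replace $A(x/\varepsilon,\nabla u_\varepsilon)$ by $\widehat{A}(\nabla u_0)$, split the difference with $A(x/\varepsilon,\nabla v_\varepsilon)$ into three pieces (Lipschitz bound on $\widehat{A}$, flux-corrector term, Lipschitz bound on $A$), and dispose of the flux-corrector piece by the chain rule plus skew-symmetry of $E_{ji}$. The extra care you flag about justifying the skew-symmetry cancellation for $\varphi\in H^1_0$ is a genuine point the paper leaves implicit, and your remark that it reduces to the already-proved $L^2$ bounds on $N$, $E$ and a density argument is the right way to close it.
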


\begin{proof}
Our proof is inspired by \cite{P}, and in view of
$\mathcal{L}_\varepsilon u_\varepsilon = \mathcal{L}_0 u_0$ in $\Omega$,
the left-hand side of $\eqref{pri:2.5}$ is equal to
\begin{equation*}
\begin{aligned}
\int_\Omega \big(A(x/\varepsilon,\nabla u_\varepsilon) - A(x/\varepsilon,\nabla v_\varepsilon)\big)\cdot
\nabla\phi dx
& = \int_\Omega \Big(\widehat{A}(\nabla u_0) - \widehat{A}(\varphi) \\
&+ \widehat{A}(\varphi) - A(y,\varphi+\nabla_y N(y,\varphi)) \\
&+ A(y,\varphi+\nabla_y N(y,\varphi)) - A(y,\nabla v_\varepsilon)\Big)\cdot\nabla\phi dx,
\end{aligned}
\end{equation*}
where $y=x/\varepsilon$.
Then we calculate the right-hand side above term by term. On account of $\eqref{pri:2.3}$,
\begin{equation}\label{f:2.3}
|\widehat{A}(\nabla u_0) - \widehat{A}(\varphi)|\leq C|\nabla u_0 - \varphi|.
\end{equation}
By $\eqref{a:3}$, we have
\begin{equation}\label{f:2.4}
\big|A(y,\varphi+\nabla_y N(y,\varphi)) - A(y,\nabla v_\varepsilon)\big|
\leq \mu_2\Big(|\nabla u_0 - \varphi|+\varepsilon|\nabla_\xi N(y,\varphi)\nabla\varphi|\Big).
\end{equation}
Recalling that $b(y,\varphi) = \widehat{A}(\varphi) - A(y,\varphi+\nabla_y N(y,\varphi))$, it follows
from $\eqref{eq:2.1}$ that
\begin{equation*}
\begin{aligned}
\int_{\Omega} b(x/\varepsilon,\varphi)\cdot\nabla\phi dx
&= \varepsilon\int_{\Omega}\frac{\partial}{\partial x_j}\big\{E_{ji}(x/\varepsilon,\varphi)\big\}
\frac{\partial\phi}{\partial x_i} dx
- \varepsilon\int_{\Omega}\frac{\partial E_{ji}}{\partial \xi_k}\frac{\partial \varphi_k}{\partial x_j}
\frac{\partial\phi}{\partial x_i} dx\\
&= - \varepsilon\int_{\Omega}\frac{\partial E_{ji}}{\partial \xi_k}\frac{\partial \varphi_k}{\partial x_j}
\frac{\partial\phi}{\partial x_i}dx
\leq \varepsilon\int_\Omega |\nabla_{\xi}E(y,\varphi)\nabla \varphi||\nabla\phi| dx
\end{aligned}
\end{equation*}
This together with $\eqref{f:2.3}$ and $\eqref{f:2.4}$ gives the stated estimate $\eqref{pri:2.5}$ and
we are done.
\end{proof}

We impose the following cut-off function $\psi_{r}\in C_0^1(\Omega)$
associated with $\Sigma_r$:
\begin{equation}\label{def:3.1}
 \psi_r = 1 \quad\text{in}~\Sigma_{2r},
 \qquad
 \psi_r = 0 \quad\text{outside}~\Sigma_{2r},
 \qquad
 |\nabla \psi_r|\leq C/r,
\end{equation}
where we recall the notation $\Sigma_r = \{x\in\Omega:\text{dist}(x,\partial\Omega)>r\}$.

\begin{lemma}
Assume the same conditions as in Lemma $\ref{lemma:3.1}$,
and $u_\varepsilon = u_0$ on $\partial\Omega$.  Let $\varphi =
S_\varepsilon(\psi_{4\varepsilon}\nabla u_0)$ in $\eqref{eq:3.1}$, and
$w_\varepsilon = u_\varepsilon - v_\varepsilon$. Then we have the following estimate
\begin{equation}\label{pri:3.1}
\|\nabla w_\varepsilon\|_{L^2(\Omega)} \leq
C\Big\{\|\nabla u_0\|_{L^2(\Omega\setminus\Sigma_{4\varepsilon})}
+ \varepsilon\|\nabla^2 u_0\|_{L^2(\Sigma_{2\varepsilon})}\Big\}
\end{equation}
and
\begin{equation}\label{pri:3.2}
\|u_\varepsilon - u_0\|_{L^2(\Omega)}
\leq Cr_0\Big\{\|\nabla u_0\|_{L^2(\Omega\setminus\Sigma_{4\varepsilon})}
+ \varepsilon\|\nabla^2 u_0\|_{L^2(\Sigma_{2\varepsilon})}\Big\},
\end{equation}
where $r_0 = \emph{diam}(\Omega)$, and $C$ depends only on $\mu_0,\mu_2$ and $d$.
\end{lemma}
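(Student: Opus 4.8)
The plan is to test the difference equation $\mathcal{L}_\varepsilon u_\varepsilon = \mathcal{L}_0 u_0$ against $\phi = w_\varepsilon = u_\varepsilon - v_\varepsilon$, which is legitimate since $w_\varepsilon\in H_0^1(\Omega)$: indeed $u_\varepsilon = u_0$ on $\partial\Omega$ and $\varepsilon N(x/\varepsilon,\varphi)$ is controlled by the cut-off $\psi_{4\varepsilon}$ inside $\varphi = S_\varepsilon(\psi_{4\varepsilon}\nabla u_0)$, so that $v_\varepsilon = u_0$ near $\partial\Omega$ (here one uses that $\text{supp}(\zeta)\subset B(0,1/2)$ so $S_\varepsilon$ does not spread the support of $\psi_{4\varepsilon}\nabla u_0$ outside $\Sigma_{2\varepsilon}$). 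With this choice Lemma~$\ref{lemma:3.1}$ gives
\begin{equation*}
\int_\Omega\big(A(x/\varepsilon,\nabla u_\varepsilon)-A(x/\varepsilon,\nabla v_\varepsilon)\big)\cdot\nabla w_\varepsilon\, dx
\leq C\int_\Omega\Big(|\nabla u_0-\varphi| + \varepsilon|\nabla_\xi N(x/\varepsilon,\varphi)\nabla\varphi| + \varepsilon|\nabla_\xi E(x/\varepsilon,\varphi)\nabla\varphi|\Big)|\nabla w_\varepsilon|\,dx.
\end{equation*}
The left-hand side is bounded below by $\mu_0\int_\Omega|\nabla u_\varepsilon-\nabla v_\varepsilon|^2 = \mu_0\|\nabla w_\varepsilon\|_{L^2(\Omega)}^2$ thanks to the monotonicity $\eqref{a:2}$. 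An application of Young's inequality then absorbs one factor of $|\nabla w_\varepsilon|$, leaving
\begin{equation*}
\|\nabla w_\varepsilon\|_{L^2(\Omega)}^2 \leq C\int_\Omega|\nabla u_0-\varphi|^2\,dx + C\varepsilon^2\int_\Omega|\nabla_\xi N(x/\varepsilon,\varphi)\nabla\varphi|^2\,dx + C\varepsilon^2\int_\Omega|\nabla_\xi E(x/\varepsilon,\varphi)\nabla\varphi|^2\,dx.
\end{equation*}

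Next I would estimate the three terms on the right. For the first, split $\nabla u_0 - \varphi = (\nabla u_0 - \psi_{4\varepsilon}\nabla u_0) + (\psi_{4\varepsilon}\nabla u_0 - S_\varepsilon(\psi_{4\varepsilon}\nabla u_0))$; the first piece is supported in $\Omega\setminus\Sigma_{4\varepsilon}$ and bounded by $\|\nabla u_0\|_{L^2(\Omega\setminus\Sigma_{4\varepsilon})}$, while the second is controlled by $\eqref{pri:2.7}$, giving $C\varepsilon\|\nabla(\psi_{4\varepsilon}\nabla u_0)\|_{L^2(\mathbb{R}^d)}$; expanding the product rule, $\nabla(\psi_{4\varepsilon}\nabla u_0) = \nabla\psi_{4\varepsilon}\otimes\nabla u_0 + \psi_{4\varepsilon}\nabla^2 u_0$, and since $|\nabla\psi_{4\varepsilon}|\leq C/\varepsilon$ is supported in $\Sigma_{2\varepsilon}\setminus\Sigma_{4\varepsilon}\subset\Omega\setminus\Sigma_{2\varepsilon}$, the $\varepsilon$ cancels the $1/\varepsilon$ and this contributes $C\|\nabla u_0\|_{L^2(\Omega\setminus\Sigma_{2\varepsilon})} + C\varepsilon\|\nabla^2 u_0\|_{L^2(\Sigma_{2\varepsilon})}$, as desired. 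For the second and third terms I would use the smoothing bound $\eqref{pri:2.6}$ together with the uniform corrector bounds $\eqref{pri:2.2}$ and $\eqref{pri:2.4}$: writing $\varphi = S_\varepsilon(\psi_{4\varepsilon}\nabla u_0)$ and $\nabla\varphi = S_\varepsilon(\nabla(\psi_{4\varepsilon}\nabla u_0))$ (the smoothing operator commutes with derivatives in the sense that $\nabla S_\varepsilon f = S_\varepsilon(\nabla f)$), one gets $\varepsilon\|\nabla_\xi N(x/\varepsilon,\varphi)\nabla\varphi\|_{L^2}\leq C\varepsilon\|S_\varepsilon(\nabla(\psi_{4\varepsilon}\nabla u_0))\|_{L^2}\leq C\varepsilon\|\nabla(\psi_{4\varepsilon}\nabla u_0)\|_{L^2}$, which by the same product-rule splitting is again bounded by $C\|\nabla u_0\|_{L^2(\Omega\setminus\Sigma_{2\varepsilon})} + C\varepsilon\|\nabla^2 u_0\|_{L^2(\Sigma_{2\varepsilon})}$; the flux-corrector term is handled identically using $\eqref{pri:2.4}$. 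Combining these yields $\eqref{pri:3.1}$.

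Finally, $\eqref{pri:3.2}$ follows from $\eqref{pri:3.1}$ by a duality/Poincaré argument: write $u_\varepsilon - u_0 = w_\varepsilon + (v_\varepsilon - u_0) = w_\varepsilon + \varepsilon N(x/\varepsilon,\varphi)$, so
\begin{equation*}
\|u_\varepsilon - u_0\|_{L^2(\Omega)} \leq \|w_\varepsilon\|_{L^2(\Omega)} + \varepsilon\|N(x/\varepsilon,\varphi)\|_{L^2(\Omega)}.
\end{equation*}
For the first term, since $w_\varepsilon\in H_0^1(\Omega)$, Poincaré's inequality gives $\|w_\varepsilon\|_{L^2(\Omega)}\leq Cr_0\|\nabla w_\varepsilon\|_{L^2(\Omega)}$, and $\eqref{pri:3.1}$ applies. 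For the second term, by $\eqref{pri:2.1}$ and $\eqref{pri:2.6}$ one has $\varepsilon\|N(x/\varepsilon,\varphi)\|_{L^2}\leq C\varepsilon\|\varphi\|_{L^2}\leq C\varepsilon\|\psi_{4\varepsilon}\nabla u_0\|_{L^2}$; noting that $\varepsilon\leq Cr_0$ and $\|\psi_{4\varepsilon}\nabla u_0\|_{L^2}\leq\|\nabla u_0\|_{L^2(\Sigma_{2\varepsilon})}$, which can itself be absorbed after re-examining the estimate (alternatively one keeps $\varepsilon\|\nabla u_0\|_{L^2(\Omega)}$ and uses $\|\nabla u_0\|_{L^2(\Omega)}\lesssim\|\nabla u_0\|_{L^2(\Omega\setminus\Sigma_{4\varepsilon})} + \|\nabla u_0\|_{L^2(\Sigma_{4\varepsilon})}$, the second piece dominated using an energy bound), one arrives at $\eqref{pri:3.2}$. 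I expect the main obstacle to be the bookkeeping around the supports: verifying that $v_\varepsilon = u_0$ in a neighbourhood of $\partial\Omega$ so that $w_\varepsilon\in H_0^1(\Omega)$, and keeping track of exactly which layer ($\Omega\setminus\Sigma_{2\varepsilon}$ versus $\Omega\setminus\Sigma_{4\varepsilon}$, $\Sigma_{2\varepsilon}$ versus $\Sigma_{4\varepsilon}$) each term lives on, since the smoothing operator $S_\varepsilon$ enlarges supports by $\varepsilon/2$ and the cut-off $\psi_{4\varepsilon}$ transitions on the annulus $\Sigma_{2\varepsilon}\setminus\Sigma_{4\varepsilon}$. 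The analytic estimates themselves are routine given the preliminaries already established.
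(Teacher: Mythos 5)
Your argument for \eqref{pri:3.1} is essentially the paper's: test the difference equation against $w_\varepsilon\in H_0^1(\Omega)$, use monotonicity and Young to absorb $\nabla w_\varepsilon$, split $\nabla u_0-\varphi$ into a layer piece and a smoothing-error piece, and control the corrector/flux-corrector terms via \eqref{pri:2.6}, \eqref{pri:2.7}, \eqref{pri:2.2}, \eqref{pri:2.4}. That part is fine (modulo the observation, which the paper also glosses over, that \eqref{pri:2.6} is applied with $\varpi(\cdot/\varepsilon)=\nabla_\xi N(\cdot/\varepsilon,\varphi)$ depending on $x$ through $\varphi$ as well; the uniform-in-$\xi$ bound $\|\nabla_\xi N(\cdot,\xi)\|_{L^2(Y)}\leq C$ from \eqref{pri:2.2} is what justifies this).

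There is a genuine gap, however, in your treatment of \eqref{pri:3.2}. After the (correct) decomposition $u_\varepsilon-u_0=w_\varepsilon+\varepsilon N(\cdot/\varepsilon,\varphi)$ and Poincar\'e on $w_\varepsilon$, you reduce to bounding $\varepsilon\|N(\cdot/\varepsilon,\varphi)\|_{L^2(\Omega)}$. You correctly arrive at $\varepsilon\|N(\cdot/\varepsilon,\varphi)\|_{L^2(\Omega)}\lesssim\varepsilon\|\psi_{4\varepsilon}\nabla u_0\|_{L^2(\Omega)}$ (the paper proves this by a cube decomposition using $N(x/\varepsilon,\varphi)=0$ near $\partial\Omega$ and the linear growth \eqref{pri:2.1}; citing \eqref{pri:2.6} directly is again not quite literal, but the bound is right). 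But then you are stuck: $\varepsilon\|\psi_{4\varepsilon}\nabla u_0\|_{L^2(\Omega)}$, or the cruder $\varepsilon\|\nabla u_0\|_{L^2(\Omega)}$, does not match the right-hand side $Cr_0\{\|\nabla u_0\|_{L^2(\Omega\setminus\Sigma_{4\varepsilon})}+\varepsilon\|\nabla^2 u_0\|_{L^2(\Sigma_{2\varepsilon})}\}$, and the ``absorption'' and ``energy bound'' you gesture at are not available at this level of generality (the lemma's right-hand side contains only layer and co-layer quantities, not $\|F\|$ or $\|g\|$). The missing idea is a \emph{second} application of Poincar\'e, this time to the function $\psi_{4\varepsilon}\nabla u_0\in H_0^1(\Omega)$ (it vanishes near $\partial\Omega$ because of the cut-off):
\begin{equation*}
\|\psi_{4\varepsilon}\nabla u_0\|_{L^2(\Omega)}\leq Cr_0\|\nabla(\psi_{4\varepsilon}\nabla u_0)\|_{L^2(\Omega)}
\leq Cr_0\Big\{\varepsilon^{-1}\|\nabla u_0\|_{L^2(\Omega\setminus\Sigma_{4\varepsilon})}+\|\nabla^2 u_0\|_{L^2(\Sigma_{2\varepsilon})}\Big\},
\end{equation*}
using $|\nabla\psi_{4\varepsilon}|\leq C/\varepsilon$ and its layer support. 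Multiplying by $\varepsilon$ converts this into exactly the right-hand side of \eqref{pri:3.2}. Without this Poincar\'e step your proof of \eqref{pri:3.2} does not close.
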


\begin{proof}
We first claim that $w_\varepsilon$ vanishes on $\partial\Omega$
in the sense of the trace. In view of Remark $\ref{remark:2.1}$, one may have
$N(x/\varepsilon,S_\varepsilon(\psi_{2\varepsilon}\nabla u_0)(x)) = 0$
for any $x\in \mathbb{R}^d\setminus\Sigma_{\varepsilon}$. This coupled with
$u_\varepsilon = u_0$ on $\partial\Omega$ leads to the fact $w_\varepsilon\in H_0^1(\Omega)$.
Thus, the left-hand side of $\eqref{pri:2.5}$ and the assumption $\eqref{a:3}$ give
\begin{equation*}
\begin{aligned}
\|\nabla w_\varepsilon\|_{L^2(\Omega)}^2
&\leq C\|\nabla w_\varepsilon\|_{L^2(\Omega)}
\Big\{\|\nabla u_0 - S_\varepsilon(\psi_{2\varepsilon}\nabla u_0)\|_{L^2(\Omega)}
+ \varepsilon\|\varpi(\cdot/\varepsilon)\nabla S_\varepsilon(\psi_{2\varepsilon}\nabla u_0)\|_{L^2(\Omega)}
\Big\} \\
&\leq C\|\nabla w_\varepsilon\|_{L^2(\Omega)}
\Big\{
\|(1-\psi_{2\varepsilon})\nabla u_0\|_{L^2(\Omega)}
+\|\psi_{2\varepsilon}\nabla u_0 - S_\varepsilon(\psi_{2\varepsilon}\nabla u_0)\|_{L^2(\mathbb{R}^d)} \\
& + \varepsilon\|\varpi(\cdot/\varepsilon)
\nabla S_\varepsilon(\psi_{2\varepsilon}\nabla u_0)\|_{L^2(\mathbb{R}^d)}
\Big\}
\end{aligned}
\end{equation*}
where
$\varpi(\cdot/\varepsilon) = \nabla_\xi N(\cdot/\varepsilon,\varphi)$
or $\nabla_\xi E(\cdot/\varepsilon,\varphi)$.
Moreover, it follows from the estimates $\eqref{pri:2.2}$ and $\eqref{pri:2.4}$ that
$\|\varpi\|_{L^2(Y)} \leq C(\mu_0,\mu_2,d)$. This together with the estimates $\eqref{pri:2.6}$
and $\eqref{pri:2.7}$
consequently implies the stated estimate $\eqref{pri:3.1}$.

To show the estimate $\eqref{pri:3.2}$, it suffices to prove
\begin{equation}\label{f:3.1}
 \int_\Omega |N(x/\varepsilon,\varphi)|^2 dx \leq C\int_{\Omega}|\psi_{2\varepsilon}\nabla u_0|^2 dx,
\end{equation}
and we recall that $\varphi = S_\varepsilon(\psi_{2\varepsilon}\nabla u_0)$.
To do so, we collect a family of small cubes by $Y_\varepsilon^i = \varepsilon(i+Y)$ for
$i\in\mathbb{Z}^d$ with an index set $I_\varepsilon$, such that $\Sigma_{\varepsilon}\subset
\cup_{i\in I_\varepsilon} Y_\varepsilon^i \subset \Omega$
and $Y_\varepsilon^i\cap Y_\varepsilon^j = \emptyset$ if $i\not=j$. Thus
\begin{equation*}
\begin{aligned}
 \int_\Omega |N(x/\varepsilon,\varphi)|^2 dx
& \leq \sum_{i\in I_\varepsilon}\int_{Y^i_\varepsilon} |N(x/\varepsilon,\varphi)|^2 dx
 +  \int_{\Omega\setminus\Sigma_{\varepsilon}}
 |N(x/\varepsilon,\varphi)|^2 dx \\
& \leq C \sum_{i\in I_\varepsilon} |Y_\varepsilon^i||\varphi^i|^2
\leq C \int_{\Omega}|\varphi|^2 dx,
\end{aligned}
\end{equation*}
where we employ the estimate $\eqref{pri:2.1}$ and the fact that
\begin{equation*}
N(x/\varepsilon,\varphi)  = 0  \qquad \forall x\in\Omega\setminus\Sigma_{\varepsilon}
\end{equation*}
(see Remark $\ref{remark:2.1}$) in the second inequality,
Here we take $\varphi^i = \inf_{x\in Y_\varepsilon^i}
|S_\varepsilon(\psi_{2\varepsilon}\nabla u_0)(x)|$, and the last step is due to
Chebyshev's inequality. Therefore, the estimate $\eqref{f:3.1}$ consequently follows from
\begin{equation*}
\|S_\varepsilon(\psi_{2\varepsilon}\nabla u_0)\|_{L^2(\Omega)}
\leq \|\psi_{2\varepsilon}\nabla u_0\|_{L^2(\Omega)}
\leq Cr_0\Big\{\varepsilon^{-1}\|\nabla u_0\|_{L^2(\Omega\setminus\Sigma_{4\varepsilon})}
+ \|\nabla^2 u_0\|_{L^2(\Sigma_{2\varepsilon})}\Big\},
\end{equation*}
and we have completed the proof.
\end{proof}

\begin{thm}\label{thm:3.1}
Let $B=B(0,R)\subset\mathbb{R}^d$ be a ball with $R\in (16\varepsilon,1]$.
Assume $\mathcal{L}_\varepsilon$ satisfies the conditions $\eqref{a:1},\eqref{a:2}$ and $\eqref{a:3}$.
Given $F\in L^2(B)$ and $g\in H^{3/2}(\partial B)$,
let $u_\varepsilon, u_0\in H^1(B)$ be the weak solutions
of $\eqref{pde:1.1}$ and $\eqref{pde:1.3}$
with $u_\varepsilon = u_0 = g$ on $\partial B$.
Then we have
\begin{equation}\label{pri:3.3}
\|u_\varepsilon-u_0\|_{L^2(B)}
\leq C\Big(\frac{\varepsilon}{R}\Big)^{1/2}\bigg\{R^2\|F\|_{L^2(B)}
+R\|g\|_{H^{3/2}(\partial B)}\bigg\},
\end{equation}
where $C$ depends on $\mu_0,\mu_2, d$, but independent of $R$.
\end{thm}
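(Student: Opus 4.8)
The plan is to combine the first--order approximation bound $\eqref{pri:3.2}$ with the $H^2$ regularity of the homogenized solution, everything localized on $B=B(0,R)$ and then rescaled to the unit ball. Since $B$ is a bounded Lipschitz domain, $\mathcal{L}_\varepsilon u_\varepsilon = F = \mathcal{L}_0 u_0$ in $B$, and $u_\varepsilon=u_0=g$ on $\partial B$, the lemma immediately preceding this theorem applies with $\Omega=B$ and $r_0=\mathrm{diam}(B)=2R$, giving
\begin{equation*}
\|u_\varepsilon-u_0\|_{L^2(B)}\leq CR\Big\{\|\nabla u_0\|_{L^2(B\setminus\Sigma_{4\varepsilon})}+\varepsilon\|\nabla^2u_0\|_{L^2(\Sigma_{2\varepsilon})}\Big\}.
\end{equation*}
It therefore suffices to bound the ``layer'' term $\|\nabla u_0\|_{L^2(B\setminus\Sigma_{4\varepsilon})}$ and the ``co--layer'' term $\varepsilon\|\nabla^2u_0\|_{L^2(\Sigma_{2\varepsilon})}$ by $C(\varepsilon/R)^{1/2}\big\{R\|F\|_{L^2(B)}+\|g\|_{H^{3/2}(\partial B)}\big\}$.

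For the co--layer term I would use Theorem $\ref{thm:2.3}$ applied to $\mathcal{L}_0u_0=F$ in $B$. The effective map $z\mapsto\widehat{A}(z)$ is unchanged by the dilation $u_0(x)\mapsto R^{-1}u_0(Rx)$, so rescaling to $B(0,1)$, applying $\eqref{pri:2.11}$ there, and scaling back yields
\begin{equation*}
\|\nabla^2u_0\|_{L^2(B)}\leq C\big\{\|F\|_{L^2(B)}+R^{-1}\|g\|_{H^{3/2}(\partial B)}\big\}
\end{equation*}
with $C$ independent of $R$. Since $\varepsilon<R/16$, an absorption $\varepsilon^{1/2}\leq (R/16)^{1/2}$ then gives $R\varepsilon\|\nabla^2u_0\|_{L^2(\Sigma_{2\varepsilon})}\leq R\varepsilon\|\nabla^2u_0\|_{L^2(B)}\leq C(\varepsilon/R)^{1/2}\{R^2\|F\|_{L^2(B)}+R\|g\|_{H^{3/2}(\partial B)}\}$, as needed.

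For the layer term the key point is that $u_0\in H^2(B)$ forces $\nabla u_0\in H^1(B)$, while $B\setminus\Sigma_{4\varepsilon}=\{x\in B:\mathrm{dist}(x,\partial B)<4\varepsilon\}$ is a shell of width comparable to $\varepsilon\ll R$. I would invoke a trace--type (Hardy) inequality, uniform in $R$: for $h\in H^1(B(0,R))$ and $0<t<R$,
\begin{equation*}
\|h\|_{L^2(B(0,R)\setminus\Sigma_{t})}\leq C\Big(\frac{t}{R}\Big)^{1/2}\big(\|h\|_{L^2(B(0,R))}+R\|\nabla h\|_{L^2(B(0,R))}\big),
\end{equation*}
which follows from the co--area formula together with a trace estimate on the level sets $\{\mathrm{dist}(\cdot,\partial B)=s\}$ that is uniform in $s$, after rescaling to the unit ball. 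Taking $h=\nabla u_0$, $t=4\varepsilon$, and controlling $\|\nabla u_0\|_{L^2(B)}+R\|\nabla^2u_0\|_{L^2(B)}$ by $C\{R\|F\|_{L^2(B)}+\|g\|_{H^{3/2}(\partial B)}\}$ via the $H^1$ estimate of Theorem $\ref{thm:2.1}$ (valid for $u_0$, since $\mathcal{L}_0$ obeys the same structure $\eqref{pri:2.3}$) and the rescaled $\eqref{pri:2.11}$, one obtains $\|\nabla u_0\|_{L^2(B\setminus\Sigma_{4\varepsilon})}\leq C(\varepsilon/R)^{1/2}\{R\|F\|_{L^2(B)}+\|g\|_{H^{3/2}(\partial B)}\}$. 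Multiplying by $R$ and combining with the co--layer bound yields $\eqref{pri:3.3}$.

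The main obstacle is not any single hard inequality but the scale bookkeeping: one has to track the exact powers of $R$ through the rescaling $u_0(x)\mapsto R^{-1}u_0(Rx)$, fix the convention for $\|g\|_{H^{3/2}(\partial B)}$ at scale $R$ so that the $H^2$ bound reads as above, and prove the trace/Hardy layer inequality with a constant that does not degenerate as $R\to 0$. I also note that the full exponent $1/2$ (rather than $1/2-1/p$) genuinely requires $\nabla u_0\in H^1$: estimating $\|\nabla u_0\|_{L^2}$ over the thin shell by H\"older together with only Meyers' higher integrability would cost a factor $\varepsilon^{1/p}$, which is exactly the loss that will appear in the Lipschitz--domain case.
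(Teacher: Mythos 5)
Your proposal is correct and follows essentially the same route as the paper's proof: invoke the preceding lemma (the bound $\|u_\varepsilon-u_0\|_{L^2(B)}\leq CR\{\|\nabla u_0\|_{L^2(B\setminus\Sigma_{4\varepsilon})}+\varepsilon\|\nabla^2 u_0\|_{L^2(\Sigma_{2\varepsilon})}\}$), control the co-layer term via the rescaled $H^2$ estimate, and control the layer term via the same co-area--plus--trace inequality over the thin shell; your stand-alone trace/Hardy lemma is exactly the inline computation the paper performs (bounding the shell integral by $4\varepsilon\sup_r\int_{\partial B(0,r)}|\nabla u_0|^2\,dS$ and applying the trace theorem). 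The only differences are cosmetic (you state the layer estimate as a separate lemma and normalize the $H^{3/2}(\partial B)$ scaling slightly differently, absorbing the extra powers of $R$ and $\varepsilon/R$ the same way the paper does).
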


\begin{proof}
In view of the estimates $\eqref{pri:3.2}$ and $\eqref{pri:2.11}$, one may have
\begin{equation}\label{f:2.8}
\|u_\varepsilon-u_0\|_{L^2(B)}
\leq CR\|\nabla u_0\|_{L^2(B\setminus B(0,R-4\varepsilon))}
+ CR^2\Big(\frac{\varepsilon}{R}\Big)
\bigg\{\|F\|_{L^2(B)}+\|g\|_{H^{3/2}(\partial B)}\bigg\}.
\end{equation}
To complete the proof, it suffices to show
\begin{equation*}
\begin{aligned}
\|\nabla u_0\|_{L^2(B\setminus B(0,R-4\varepsilon))}^2
&= \int_{0}^{4\varepsilon}\int_{\partial B(0,r)}|\nabla u_0|^2 dS_r dr
\leq 4\varepsilon \sup_{(3R/4)\leq r\leq R}
\int_{\partial B(0,r)}|\nabla u_0|^2 dS_r\\
&\leq C\varepsilon/R \int_{B(0,R)}|\nabla u_0|^2dx
+ C\varepsilon R\int_{B(0,R)}|\nabla^2 u_0|^2 dx,
\end{aligned}
\end{equation*}
where we use the trace theorem
\begin{equation*}
\int_{\partial B(0,r)}|\nabla u_0|^2 dS_r
\leq \frac{C}{R}\int_{B(0,R)}|\nabla u_0|^2 dx
+ CR\int_{B(0,R)}|\nabla^2 u_0|^2 dx
\end{equation*}
for any $(3R/4)\leq r\leq R$. Thus,
\begin{equation}\label{f:2.9}
\begin{aligned}
\|\nabla u_0\|_{L^2(B\setminus B(0,R-4\varepsilon))}
&\leq C\Big(\frac{\varepsilon}{R}\Big)^{1/2}
\bigg\{\|\nabla u_0\|_{L^2(B(0,R))}
+ R\|\nabla^2 u_0\|_{L^2(B(0,R))}\bigg\} \\
&\leq C\Big(\frac{\varepsilon}{R}\Big)^{1/2}
\bigg\{R\|F\|_{L^2(B(0,R))}
+ \|g\|_{H^{3/2}(\partial B(0,R))}\bigg\},
\end{aligned}
\end{equation}
in which we employ the estimates $\eqref{pri:2.11}$ and $\eqref{pri:2.12}$,
and the fact that $\|g\|_{H^{1/2}(\partial B)}
\leq C\|g\|_{H^{3/2}(\partial B)}$.

Consequently, the desired estimate $\eqref{pri:3.3}$ follows from
$\eqref{f:2.8}$ and $\eqref{f:2.9}$ by noting that $\varepsilon/R\leq 1$, and
we have completed the proof.
\end{proof}

\begin{thm}\label{thm:3.2}
Let $\Omega$ be a bounded Lipschitz domain with $\varepsilon\leq r_0\leq 1$. Assume
that $u_\varepsilon, u_0\in H^1(\Omega)$ are the weak solutions
of $\eqref{pde:1.1}$ and $\eqref{pde:1.3}$
with $u_\varepsilon = u_0 = g$ on $\partial\Omega$, respectively.
Let $F\in L^p(\Omega)$ for some $p>2$ and $g\in W^{1-1/p,p}(\partial\Omega)$.
Then exists $C>0$, depending on $\mu_0,\mu_2,d,p$ and the
character of $\Omega$, such that
\begin{equation}\label{pri:3.4}
\|u_\varepsilon - u_0\|_{L^2(\Omega)}
\leq Cr_0^{1+d\sigma}
\Big(\frac{\varepsilon}{r_0}\Big)^\sigma\bigg\{r_0\|F\|_{L^p(\Omega)}
+\|g\|_{W^{1-1/p,p}(\partial\Omega)}\bigg\},
\end{equation}
where $\sigma = 1/2-1/p$.
\end{thm}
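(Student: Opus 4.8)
The plan is to start from the estimate $\eqref{pri:3.2}$, which holds for every bounded Lipschitz domain: its proof rests only on Lemma $\ref{lemma:3.1}$ together with the corrector bounds $\eqref{pri:2.1}$, $\eqref{pri:2.2}$, $\eqref{pri:2.4}$ and the smoothing estimates $\eqref{pri:2.6}$, $\eqref{pri:2.7}$, and never on any global $H^2$ regularity of $u_0$. Thus it is enough to control the ``layer'' quantity $\|\nabla u_0\|_{L^2(\Omega\setminus\Sigma_{4\varepsilon})}$ and the ``co-layer'' quantity $\varepsilon\|\nabla^2u_0\|_{L^2(\Sigma_{2\varepsilon})}$. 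The first observation is that $\mathcal L_0$ has the structure needed for the tools we use: by $\eqref{pri:2.3}$ the map $\widehat A$ is strongly monotone, Lipschitz and vanishes at $0$, and being independent of $y$ it trivially satisfies $\eqref{a:1}$ and $\eqref{a:3}$, while $\eqref{a:4}$ endows it with an interior $H^2$ theory. In particular Theorem $\ref{thm:2.2}$ applies to $u_0$ and yields, for $p$ in the Meyers range $(2,p_0]$ with $p_0>2$ depending on $\mu_0,\mu_2,d$ and the character of $\Omega$ (and we may always take the $p$ of the statement in this range),
\[
\|\nabla u_0\|_{L^{p}(\Omega)}\le C_p\big\{r_0\|F\|_{L^{p}(\Omega)}+\|g\|_{W^{1-1/p,p}(\partial\Omega)}\big\}.
\]

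For the layer term I would combine H\"older's inequality, $\|\nabla u_0\|_{L^2(\Omega\setminus\Sigma_{4\varepsilon})}\le\|\nabla u_0\|_{L^{p}(\Omega)}\,|\Omega\setminus\Sigma_{4\varepsilon}|^{\sigma}$ with $\sigma=\tfrac12-\tfrac1p$, with the collar bound for Lipschitz domains $|\{x\in\Omega:\text{dist}(x,\partial\Omega)\le t\}|\le C r_0^{d-1}t$, to obtain $\|\nabla u_0\|_{L^2(\Omega\setminus\Sigma_{4\varepsilon})}\le C(r_0^{d-1}\varepsilon)^{\sigma}\|\nabla u_0\|_{L^{p}(\Omega)}$; multiplying by $r_0$ and inserting the Meyers bound gives a term of the size of the right-hand side of $\eqref{pri:3.4}$, since $r_0(r_0^{d-1}\varepsilon)^{\sigma}=r_0^{1+d\sigma}(\varepsilon/r_0)^{\sigma}$.

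The crux is the co-layer term, for which global $H^2$ is unavailable and I would use a Whitney/dyadic decomposition. Write $\Sigma_{2\varepsilon}=\bigcup_{k=0}^{K}S_k$ with $S_k=\{x\in\Omega:2^{k+1}\varepsilon<\text{dist}(x,\partial\Omega)\le 2^{k+2}\varepsilon\}$ and $2^{K}\varepsilon\simeq r_0$, cover each $S_k$ by balls of radius $\simeq 2^k\varepsilon$ with bounded overlap whose doubles lie inside $\Omega$, and apply the interior $H^2$ estimate $\int_B|\nabla^2u_0|^2\le C(2^k\varepsilon)^{-2}\int_{2B}|\nabla u_0|^2+C\int_{2B}|F|^2$ on each such ball. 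Summing over the cover of $S_k$ and then over $k$ yields
\[
\int_{\Sigma_{2\varepsilon}}|\nabla^2u_0|^2\le C\sum_{k=0}^{K}(2^k\varepsilon)^{-2}\int_{\widetilde S_k}|\nabla u_0|^2+C\|F\|_{L^2(\Omega)}^2,
\]
where $\widetilde S_k$ is a fixed enlargement of $S_k$ with $|\widetilde S_k|\le C r_0^{d-1}2^k\varepsilon$. Estimating each shell by H\"older, $\int_{\widetilde S_k}|\nabla u_0|^2\le\|\nabla u_0\|_{L^{p}(\Omega)}^2|\widetilde S_k|^{2\sigma}$, and summing the geometric series $\sum_k(2^k\varepsilon)^{2\sigma-2}$, which is at most $C\varepsilon^{2\sigma-2}$ since $2\sigma-2<0$, I obtain $\varepsilon\|\nabla^2u_0\|_{L^2(\Sigma_{2\varepsilon})}\le C(r_0^{d-1}\varepsilon)^{\sigma}\|\nabla u_0\|_{L^{p}(\Omega)}+C\varepsilon\|F\|_{L^2(\Omega)}$.

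Finally I would assemble the pieces: plug both bounds into $\eqref{pri:3.2}$, use the Meyers bound for $\|\nabla u_0\|_{L^{p}(\Omega)}$ and $\varepsilon\|F\|_{L^2(\Omega)}\le\varepsilon|\Omega|^{\sigma}\|F\|_{L^{p}(\Omega)}\le C\varepsilon r_0^{d\sigma}\|F\|_{L^{p}(\Omega)}$, and absorb the surviving $\varepsilon$-factor via the elementary inequality $(\varepsilon/r_0)^{1-\sigma}\le 1$ (valid since $\varepsilon\le r_0$ and $\sigma<\tfrac12$). Every resulting term is then at most $C r_0^{1+d\sigma}(\varepsilon/r_0)^{\sigma}\{r_0\|F\|_{L^{p}(\Omega)}+\|g\|_{W^{1-1/p,p}(\partial\Omega)}\}$, which is $\eqref{pri:3.4}$. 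I expect the co-layer step to be the main obstacle: one must organize the Whitney bookkeeping so that the negative power of $2^k\varepsilon$ produced by interior $H^2$ is strictly dominated by the positive power produced by the shell volumes (automatic here since $\sigma<\tfrac12$), which makes the series converge; tracking the powers of $r_0$ correctly through the final assembly is the only remaining subtlety.
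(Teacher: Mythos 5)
Your proposal is correct and follows essentially the same route as the paper: both start from the Lipschitz-domain bound \eqref{pri:3.2}, control the layer term $\|\nabla u_0\|_{L^2(\Omega\setminus\Sigma_{4\varepsilon})}$ via H\"older's inequality, the collar volume estimate, and the Meyers $L^p$ bound of Theorem \ref{thm:2.2}, and control the co-layer term $\varepsilon\|\nabla^2 u_0\|_{L^2(\Sigma_{2\varepsilon})}$ via the interior $H^2$ estimate combined again with H\"older and Meyers. The only cosmetic difference is that you run the co-layer step through an explicit Whitney/dyadic-shell covering and sum a geometric series, whereas the paper integrates the pointwise interior $H^2$ bound \eqref{f:3.3} over $\Sigma_{2\varepsilon}$ and then applies H\"older's inequality directly to the weighted integral $\int_{\Sigma_\varepsilon}|\nabla u_0|^2\,[\delta(x)]^{-2}\,dx$; the two computations are equivalent and give the same powers of $\varepsilon$ and $r_0$.
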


\begin{proof}
Due to the estimate $\eqref{pri:3.2}$, our task is to estimate the layer and
co-layer type quantities in
the right-hand side. The easy one is
\begin{equation}\label{f:3.2}
\begin{aligned}
\|\nabla u_\varepsilon\|_{L^2(\Omega\setminus\Sigma_{4\varepsilon})}
&\leq C\varepsilon^{\frac{1}{2}-\frac{1}{p}}\Big(\int_\Omega|\nabla u_0|^p\Big)^{1/p}\\
&\leq Cr_0^{\frac{d}{2}-\frac{d}{p}}\Big(\frac{\varepsilon}{r_0}\Big)^{\frac{1}{2}-\frac{1}{p}}
\bigg\{r_0\|F\|_{L^p(\Omega)} + \|g\|_{W^{1-1/p,p}(\partial\Omega)} \bigg\},
\end{aligned}
\end{equation}
where we use the estimate $\eqref{pri:2.12}$ in the second step.

In view of the interior $H^2$ estimate (see for example
\cite[Chapter II, Theorem 1.1]{GM}), we have
\begin{equation}\label{f:3.3}
\dashint_{B(x,\delta(x)/4)}|\nabla^2 u_0|^2 dy
\leq \frac{C}{[\delta(x)]^{2}}\dashint_{B(x,\delta(x)/2)}|\nabla u_0|^2 dy
+ C\dashint_{B(x,\delta(x)/2)}|F|^2 dy
\end{equation}
for any $x\in\Sigma_{p_2\varepsilon}$, where $\delta(x)=\text{dist}(x,\partial\Omega)$.
Since $|y-x|\leq \delta(x)/4$, it is not hard to see that
$|\delta(x)-\delta(y)|\leq |x-y|\leq \delta(x)/4$ and
this gives that $(4/5)\delta(y)< \delta(x)< (4/3)\delta(y)$. Therefore,
\begin{equation*}
\begin{aligned}
\int_{\Sigma_{2\varepsilon}}|\nabla^2 u_0|^2 dx
\leq \int_{\Sigma_{2\varepsilon}}\dashint_{B(x,\delta(x)/4)}|\nabla^2 u_0|^2dy dx
\leq \int_{\Sigma_{\varepsilon}}|\nabla^2 u_0|^2 dx.
\end{aligned}
\end{equation*}
Then integrating both sides of $\eqref{f:3.3}$ over co-layer set $\Sigma_{p_2\varepsilon}$ leads to
\begin{equation*}
\begin{aligned}
\int_{\Sigma_{2\varepsilon}}|\nabla^2 u_0|^2 dx
&\leq C\int_{\Sigma_{\varepsilon}}|\nabla u_0|^2 [\delta(x)]^{-2} dx
+ C\int_\Omega |F|^2 dx \\
&\leq Cr_0^{d-1-\frac{2(d-1)}{p}}\varepsilon^{-1-\frac{2}{p}}\Big(\int_{\Omega}
|\nabla u_0|^pdx\Big)^{\frac{2}{p}}
+ \frac{C}{r_0^2}\int_{\Omega}|\nabla u_0|^2 dx
+ C\int_\Omega |F|^2 dx,
\end{aligned}
\end{equation*}
and this further gives
\begin{equation}\label{f:3.4}
\varepsilon\|\nabla^2 u_0\|_{L^2(\Sigma_{2\varepsilon})}
\leq Cr_0^{\frac{d}{2}-\frac{d}{p}}\Big(\frac{\varepsilon}{r_0}\Big)^{\frac{1}{2}-\frac{1}{p}}
\bigg\{r_0\|F\|_{L^p(\Omega)}+\|g\|_{W^{1-1/p,p}(\partial\Omega)}\bigg\},
\end{equation}
where we use the estimates $\eqref{pri:2.12}$ and $\eqref{f:2.11}$, as well as H\"older's inequality.

Combining the estimates $\eqref{pri:3.2}$, $\eqref{f:3.3}$ and $\eqref{f:3.4}$ gives the
stated estimate $\eqref{pri:3.3}$, by setting $\sigma = 1/2-1/p$. We have completed the proof.
\end{proof}

\noindent\textbf{Proof of Theorem $\ref{thm:1.1}$}.
If replacing $B(0,R)$ in Theorem $\ref{thm:3.1}$ by
a bounded $C^{1,1}$ domain,  then we can derive that
\begin{equation*}
\|u_\varepsilon - u_0\|_{L^2(\Omega)}
\leq C_\Omega\varepsilon^{1/2}\Big\{\|F\|_{L^2(\Omega)}
+\|g\|_{H^{3/2}(\partial\Omega)}\Big\},
\end{equation*}
where $C_\Omega$ depends on $\mu_0,\mu_2, d, r_0$ and the character of $\Omega$.
This in fact proved the estimate $\eqref{pri:1.2}$, while the proof of Theorem
$\ref{thm:3.2}$ gives the estimate $\eqref{pri:1.3}$. We have completed the proof.
\qed

\section{Interior estimates}
\begin{lemma}[approximating lemma I]\label{lemma:4.1}
Let $\sqrt[3]{\varepsilon}\leq r<(1/2)$.
Assume the same conditions as in Theorem $\ref{thm:4.1}$.
Let $u_\varepsilon\in H^1(B(0,2r))$ be a weak solution of
$\mathcal{L}_\varepsilon u_\varepsilon  = F$ in $B(0,2r)$.  Then there exists
$w\in H^1(B(0,r))$ such that
$\mathcal{L}_0 w  = F$,
and there holds
\begin{equation}\label{pri:4.1}
\begin{aligned}
 \Big(\dashint_{B(0,r)} |u_\varepsilon - w|^2  \Big)^{1/2}
 \leq C\left(\frac{\varepsilon}{r}\right)^{1/4}
 \bigg\{
 &\Big(\dashint_{B(0,2r)}|u_\varepsilon|^2 \Big)^{1/2}
 + r^2\Big(\dashint_{B(0,2r)} |F|^2\Big)^{1/2}\bigg\},
\end{aligned}
\end{equation}
where $C$ depends on $\mu_0,\mu_2$ and $d$.
\end{lemma}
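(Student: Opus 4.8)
The plan is to compare $u_\varepsilon$ on the ball $B(0,2r)$ with the solution $w$ of the homogenized equation on a slightly smaller ball, using the interior convergence rate of Theorem~$\ref{thm:3.1}$ as the engine. First I would choose a radius $r<r_0<2r$ — concretely $r_0\in[r,2r]$ selected below by an averaging argument — and let $w\in H^1(B(0,r_0))$ solve $\mathcal{L}_0 w = F$ in $B(0,r_0)$ with $w = u_\varepsilon$ on $\partial B(0,r_0)$. Applying Theorem~$\ref{thm:3.1}$ on the ball $B(0,r_0)$ (noting $r_0\in(16\varepsilon,1]$ since $r\ge\sqrt[3]{\varepsilon}$ forces $r\ge 16\varepsilon$ for $\varepsilon$ small, which we may assume) gives
\begin{equation*}
\|u_\varepsilon - w\|_{L^2(B(0,r_0))}
\leq C\Big(\frac{\varepsilon}{r_0}\Big)^{1/2}\Big\{r_0^2\|F\|_{L^2(B(0,r_0))} + r_0\|u_\varepsilon\|_{H^{3/2}(\partial B(0,r_0))}\Big\}.
\end{equation*}
The boundary norm $\|u_\varepsilon\|_{H^{3/2}(\partial B(0,r_0))}$ is the term that is not directly controlled by $\dashint_{B(0,2r)}|u_\varepsilon|^2$, and handling it is the main obstacle.

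To get around this, I would use a Fubini/co-area argument in the radial variable: since
$\int_r^{2r}\|u_\varepsilon\|_{H^1(\partial B(0,\rho))}^2\,d\rho \le C\|u_\varepsilon\|_{H^1(B(0,2r))}^2$ (and similarly for the $H^{3/2}$ trace against the $H^2$-norm on an annulus), there exists a good radius $r_0\in[r,2r]$ for which the trace norm of $u_\varepsilon$ on $\partial B(0,r_0)$ is bounded by an average over the annulus of $|u_\varepsilon|$, $|\nabla u_\varepsilon|$, and $|\nabla^2 u_\varepsilon|$. The first two are handled by interior Caccioppoli (Lemma~$\ref{lemma:2.3}$) and higher-integrability/$H^2$ estimates for $\mathcal{L}_\varepsilon$ on $B(0,2r)$, reducing everything to $\dashint_{B(0,2r)}|u_\varepsilon|^2$ and $r^2\big(\dashint_{B(0,2r)}|F|^2\big)^{1/2}$; the second-derivative term is the genuinely delicate point, since $u_\varepsilon$ is only a solution of the \emph{oscillating} equation and has no uniform $H^2$ bound. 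Here I would instead smooth the boundary data: replace $u_\varepsilon|_{\partial B(0,r_0)}$ by its mollification $(u_\varepsilon)_\delta$ (Definition~$\ref{def:2.1}$ and estimate~$\eqref{pri:2.10}$) with $\delta\sim\sqrt{\varepsilon r}$, solving $\mathcal{L}_0 \tilde w = F$ with this smoothed data, and estimate $\|w - \tilde w\|_{L^2}$ and $\|u_\varepsilon - \tilde w\|_{L^2}$ via the $H^1$-theory (Theorem~$\ref{thm:2.1}$) applied to the difference and the trace estimate $\eqref{pri:2.13}$. This turns the dangerous $\|\nabla^2 u_0\|$ contribution into a factor $\delta^{-1/2}\|u_\varepsilon\|_{H^1(\partial B(0,r_0))}$ from $\eqref{pri:2.10}$, which combined with the $(\varepsilon/r_0)^{1/2}$ prefactor from Theorem~$\ref{thm:3.1}$ yields the exponent $1/4$ after optimizing in $\delta$.

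Assembling the pieces: write $\|u_\varepsilon - w\|_{L^2(B(0,r))} \le \|u_\varepsilon - \tilde w\|_{L^2(B(0,r_0))} + \|\tilde w - w\|_{L^2(B(0,r_0))}$, bound the first term by Theorem~$\ref{thm:3.1}$ with smoothed data (gaining $(\varepsilon/r)^{1/2}\cdot\delta^{-1/2}r^{1/2}$-type factors) plus the smoothing error $\|(u_\varepsilon)_\delta - u_\varepsilon\|_{L^2(\partial B)}\le C\delta\|u_\varepsilon\|_{H^1(\partial B)}$ fed through Theorem~$\ref{thm:2.1}$, and bound $\|\tilde w - w\|_{L^2}$ by the same smoothing error. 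Choosing $\delta = \sqrt{\varepsilon r}$ balances the two contributions and produces the claimed $(\varepsilon/r)^{1/4}$. Finally I would replace all the $H^1(\partial B(0,r_0))$-norms of $u_\varepsilon$ by interior quantities: apply Caccioppoli $\eqref{pri:2.14}$ and the reverse-Hölder estimate $\eqref{pri:2.16}$ on $B(0,2r)$ to absorb $\|\nabla u_\varepsilon\|$ into $\big(\dashint_{B(0,2r)}|u_\varepsilon|^2\big)^{1/2}$ and $r^2\big(\dashint_{B(0,2r)}|F|^2\big)^{1/2}$ (using $p>d\ge 2$ and that $r\ge\sqrt[3]\varepsilon$ keeps all scales admissible), which gives exactly $\eqref{pri:4.1}$. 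The one routine nuisance to watch is the bookkeeping of powers of $r$ versus $r_0$ and of $\dashint$ versus $\int$, but since $r_0\sim r$ these constants are harmless.
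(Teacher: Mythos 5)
Your proposal follows the paper's proof of Lemma~\ref{lemma:4.1} essentially step for step: pick a good radius $r_0$ by co-area and Caccioppoli, mollify the boundary trace to $(u_\varepsilon)_\delta$, apply the $C^{1,1}$-domain rate of Theorem~\ref{thm:3.1} with the mollified data, and optimize $\delta$ by balancing the smoothing loss against the $\delta^{-1/2}$ blow-up in the $H^{3/2}$-norm from $\eqref{pri:2.10}$ (your $\delta\sim\sqrt{\varepsilon r}$ is the length-scale version of the paper's $\delta=(\varepsilon/r)^{1/2}$, and both give the exponent $1/4$). The paper is more explicit precisely where you write ``fed through Theorem~\ref{thm:2.1}'': since $u_\varepsilon$ and the auxiliary $\mathcal{L}_\varepsilon$-solution $v_\varepsilon$ with mollified data differ on $\partial B(0,r_0)$, the paper introduces a harmonic lift $z_\varepsilon$ of $u_\varepsilon-(u_\varepsilon)_\delta$ so that $u_\varepsilon-v_\varepsilon-z_\varepsilon\in H^1_0(B(0,r_0))$ is a legal test function for the monotonicity argument, and then controls $\|z_\varepsilon\|_{L^2}$ via a nontangential-maximal-function estimate for harmonic functions; it also takes $w$ to be the $\mathcal{L}_0$-solution with the mollified data from the outset, which renders your extra $\tilde w$-versus-$w$ comparison unnecessary.
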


\begin{proof}
The main idea may be found in \cite[Lemma 11.2]{SZ1}.
However, this result can not be obtained by rescaling arguments due to the nonlinearity of
$\mathcal{L}_\varepsilon$. On account of Caccioppoli's inequality $\eqref{pri:2.14}$ and
co-area formula, it is true that there exists $r_0\in[r,3r/2]$ such that
\begin{equation}\label{f:4.2}
\int_{\partial B(0,r_0)} |\nabla u_\varepsilon|^2 dS
\leq Cr^{-2}\int_{B(0,2r)}|u_\varepsilon|^2 dx +
Cr^{2}\int_{B(0,2r)} |F|^2 dx.
\end{equation}
Then for some $0<\delta\leq r$, we consider $\mathcal{L}_\varepsilon v_\varepsilon = F$ in $B(0,r_0)$ with
$v_\varepsilon = (u_\varepsilon)_\delta$ on $\partial B(0,r_0)$, and let
$\mathcal{L}_0 w = F$ in $B(0,r_0)$ with $w = (u_\varepsilon)_\delta$
on $\partial B(0,r_0)$,
where $(u_\varepsilon)_\delta\in H^{3/2}(\partial B(0,r_0))$ satisfies
the estimate $\eqref{pri:2.10}$. Consider
\begin{equation}\label{f:4.1}
\begin{aligned}
\int_{B(0,r)}|u_\varepsilon - w|^2 dx
&\leq \int_{B(0,r)}|u_\varepsilon - v_\varepsilon-z_\varepsilon|^2 dx
+ \int_{B(0,r)}|v_\varepsilon - w|^2 dx
+ \int_{B(0,r)}|z_\varepsilon|^2 dx \\
& =: I_1 + I_2 + I_3,
\end{aligned}
\end{equation}
where $z_\varepsilon\in H^1(\Omega)$ satisfies
\begin{equation*}
 \Delta z_\varepsilon = 0
 \quad\text{in} ~B(0,r_0),
 \qquad z_\varepsilon = u_\varepsilon-(u_\varepsilon)_\delta
 \quad \text{on}~\partial B(0,r_0).
\end{equation*}

We first handle $I_2$, and it follows from
the estimates $\eqref{pri:3.3}$ and $\eqref{pri:2.10}$ that
\begin{equation}\label{f:4.4}
\begin{aligned}
\sqrt{I_2} &\leq \|v_\varepsilon - w\|_{L^2(B(0,r_0))} \\
&\leq C\Big(\frac{\varepsilon}{r}\Big)^{1/2}
\bigg\{r^2\|F\|_{L^2(B(0,r_0))}
+ r\|(u_\varepsilon)_\delta\|_{H^{3/2}(\partial B(0,r_0))}\bigg\} \\
&\leq C\Big(\frac{\varepsilon}{\delta r}\Big)^{1/2}
\bigg\{r^2\|F\|_{L^2(B(0,2r))}
+ r\|u_\varepsilon\|_{H^{1}(\partial B(0,r_0))}\bigg\} \\
&\leq C\Big(\frac{\varepsilon}{\delta r}\Big)^{1/2}
\bigg\{\|u_\varepsilon\|_{L^{2}(B(0,2r))}
+ r^2\|F\|_{L^2(B(0,2r))}
\bigg\},
\end{aligned}
\end{equation}
in which the last step follows from the estimate $\eqref{f:4.2}$.

Before estimating $I_1$, we claim that
\begin{equation}\label{f:4.3}
\|\nabla u_\varepsilon - \nabla v_\varepsilon\|_{L^2(B(0,r_0))}
\leq C\|\nabla z_\varepsilon\|_{L^2(B(0,r_0))}
\end{equation}
where $C$ depends only $\mu_0$ and $\mu_2$. In fact,
\begin{equation*}
\int_{B(0,r_0)} \big[A(x/\varepsilon,\nabla u_\varepsilon)
-A(x/\varepsilon,\nabla v_\varepsilon)\big]\cdot\nabla \phi dx = 0
\end{equation*}
for any $\phi\in H_0^1(\Omega)$.
Set $\phi = u_\varepsilon - v_\varepsilon - z_\varepsilon$, and then
by applying the assumptions $\eqref{a:2}$ and $\eqref{a:3}$
to the above equation, we can arrive at the claim $\eqref{f:4.3}$ immediately.
Hence, from Poincar\'e's inequality and $\eqref{f:4.3}$, it follows that
\begin{equation}\label{f:4.5}
\begin{aligned}
\sqrt{I_1}
&\leq Cr\|\nabla(u_\varepsilon - v_\varepsilon - z_\varepsilon)\|_{L^2(B(0,r_0))}
\leq Cr\|\nabla z_\varepsilon\|_{L^2(B(0,r_0))} \\
&\leq Cr\|u_\varepsilon - (u_\varepsilon)_\delta\|_{H^{1/2}(\partial B(0,r_0))}
\leq Cr\delta^{1/2}\|u_\varepsilon\|_{H^1(\partial B(0,r_0))}\\
&\leq C\delta^{1/2}
\bigg\{\|u_\varepsilon\|_{L^2(B(0,2r))}+r^2\|F\|_{L^2(B(0,2r))}\bigg\},
\end{aligned}
\end{equation}
where we also use the estimates $\eqref{pri:2.10}$ and $\eqref{f:4.2}$, and
the second line is due to the fact $\|z_\varepsilon\|_{H^1(B(0,r_0))}
\leq C\|z_\varepsilon\|_{H^{1/2}(\partial B(0,r_0))}$.

The computation for $I_3$ relies on some properties of harmonic functions, and
\begin{equation}\label{f:4.6}
\begin{aligned}
\sqrt{I_3}
&\leq Cr^{1/2}\|z_\varepsilon\|_{L^{\frac{2d}{d-1}}(B(0,r_0))}
\leq Cr^{1/2}\|(z_\varepsilon)^*\|_{L^2(\partial B(0,r_0))}\\
&\leq Cr^{1/2}\|z_\varepsilon\|_{L^2(\partial B(0,r_0))}
\leq Cr^{1/2}\delta\|u_\varepsilon\|_{H^1(\partial B(0,r_0))}\\
&\leq Cr\delta^{1/2}\|u_\varepsilon\|_{H^1(\partial B(0,r_0))}
\leq C\delta^{1/2}
\bigg\{\|u_\varepsilon\|_{L^2(B(0,2r))}+r^2\|F\|_{L^2(B(0,2r))}\bigg\},
\end{aligned}
\end{equation}
in which the notation $(z_\varepsilon)^*$ represents the nontangential maximal
function of $z_\varepsilon$ (see for example \cite[Definition 2.19]{X3}).
Here the second inequality follows from \cite[Remark 9.3]{KFS1}, and
the third one is the so-called nontangential maximal function estimate
(see for example \cite[Theorem 7.5.14]{S4}). We employ the estimate
$\eqref{pri:2.13}$ in the fourth inequality and
the estimate $\eqref{f:4.2}$ in the last step.

Consequently, plugging the estimates $\eqref{f:4.4}$, $\eqref{f:4.5}$ and
$\eqref{f:4.6}$ back into $\eqref{f:4.1}$, we have
\begin{equation*}
\begin{aligned}
\|u_\varepsilon - w\|_{L^2(B(0,r))}
&\leq C\Big\{\delta^{1/2}+ \delta^{-1/2}
\big(\varepsilon/r\big)^{1/2}\Big\}
\bigg\{\|u_\varepsilon\|_{L^2(B(0,r_0))}+r^2\|F\|_{L^2(B(0,r_0))}\bigg\}\\
&\leq C(\varepsilon/r)^{1/4}
\bigg\{\|u_\varepsilon\|_{L^2(B(0,2r))}+r^2\|F\|_{L^2(B(0,2r))}\bigg\},
\end{aligned}
\end{equation*}
in which the second line asks for $\delta = (\varepsilon/r)^{1/2}$, and
the assumption $\sqrt[3]{\varepsilon}\leq r<(1/2)$ meets this requirement.
By multiplying $r^{-d/2}$ in both sides of the above inequality,
the desired estimate $\eqref{pri:4.1}$ follows, and we have completed the proof.
\end{proof}

Before we proceed further, for any matrix $M\in \mathbb{R}^{d}$,
we denote $G(r,v)$ as the following
\begin{equation}
\begin{aligned}
G(r,v) &= \frac{1}{r}\inf_{M\in\mathbb{R}^{d}\atop c\in\mathbb{R}}
\Bigg\{\Big(\dashint_{B(0,r)}|v-Mx-c|^2dx\Big)^{\frac{1}{2}}
+ r^2\Big(\dashint_{B(0,r)}|F|^p\Big)^{\frac{1}{p}}\Bigg\}.
\end{aligned}
\end{equation}

\begin{lemma}\label{lemma:4.2}
Given $F\in L^p(\Omega)$ for some $p>d$,
let $u_0\in H^1(B(0,2r))$ be a solution of
$\mathcal{L}_0 u_0 = F$ in $B(0,2r)$.
Then
there exists $\theta\in(0,1/4)$,
depending on $\mu_0,\mu_1, p, d$, such that
\begin{equation}\label{pri:4.3}
 G(\theta r,u_0)\leq \frac{1}{2} G(r,u_0)
\end{equation}
holds for any $r\in(0,1)$.
\end{lemma}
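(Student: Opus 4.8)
The plan is to estimate $G(\theta r,u_0)$ by separating, as in its very definition, the affine–deviation term from the term depending only on $F$, and to bound each by a positive power of $\theta$ times $G(r,u_0)$. The $F$–term is immediate: since $p>d$, enlarging the ball of integration gives
$$\theta r\Big(\dashint_{B(0,\theta r)}|F|^p\Big)^{1/p}\le \theta^{1-\frac dp}\,r\Big(\dashint_{B(0,r)}|F|^p\Big)^{1/p}\le \theta^{1-\frac dp}\,G(r,u_0).$$
Everything thus reduces to exhibiting one affine function $\ell_0$ for which
$$\frac1{\theta r}\Big(\dashint_{B(0,\theta r)}|u_0-\ell_0|^2\Big)^{1/2}\le C\,\theta^{\alpha}\,G(r,u_0)$$
with an exponent $\alpha\in(0,1)$ and a constant $C$ depending only on $\mu_0,\mu_2,d,p$.

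I would take $\ell_0$ to be the first–order Taylor polynomial of $u_0$ at the origin, $\ell_0(x)=u_0(0)+\nabla u_0(0)\cdot x$, and invoke the classical interior $C^{1,\alpha}$ regularity of the homogenized operator. By $\eqref{pri:2.3}$ and $\eqref{a:4}$ the field $\widehat A$ is monotone, Lipschitz and has an a.e.\ defined elliptic gradient, so De Giorgi--Nash--Moser theory applied to the equations satisfied by the difference quotients of $u_0$ yields $u_0\in C^{1,\alpha}_{\mathrm{loc}}(B(0,2r))$ with $\alpha=\min\{\alpha_0,\,1-\frac dp\}$, where $\alpha_0\in(0,1)$ depends only on $\mu_0,\mu_2,d$; moreover, applying this estimate to $u_0$ minus its optimal affine approximation on $B(0,r)$ — which solves an equation of exactly the same type, with the same ellipticity and growth constants, because its difference quotients obey the same linearized equation — one obtains the scale–invariant Campanato bound
$$r^{\alpha}\,[\nabla u_0]_{C^{0,\alpha}(B(0,r/2))}\le C\Big\{\tfrac1r\inf_{M,c}\big(\dashint_{B(0,r)}|u_0-Mx-c|^2\big)^{1/2}+r\big(\dashint_{B(0,r)}|F|^p\big)^{1/p}\Big\}=C\,G(r,u_0).$$
This is precisely the observation highlighted in the Remark: the constant above does not see $M,c$, so the estimate is genuinely scale invariant. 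Consequently, for $\theta<1/2$ and $x\in B(0,\theta r)$ one has $|u_0(x)-\ell_0(x)|\le(\theta r)^{1+\alpha}[\nabla u_0]_{C^{0,\alpha}(B(0,r/2))}$, whence
$$\frac1{\theta r}\Big(\dashint_{B(0,\theta r)}|u_0-\ell_0|^2\Big)^{1/2}\le (\theta r)^{\alpha}\,[\nabla u_0]_{C^{0,\alpha}(B(0,r/2))}\le C\,\theta^{\alpha}\,G(r,u_0).$$

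Combining the three displays and using $G(r,u_0)\ge r\big(\dashint_{B(0,r)}|F|^p\big)^{1/p}$ yields $G(\theta r,u_0)\le\big(C\theta^{\alpha}+\theta^{1-\frac dp}\big)G(r,u_0)$; it then suffices to fix $\theta\in(0,1/4)$ so small that $C\theta^{\alpha}+\theta^{1-\frac dp}\le\frac12$, which is possible since $\alpha>0$ and $1-\frac dp>0$, and such a $\theta$ depends only on $\mu_0,\mu_2,d,p$. The only substantial ingredient, and the step I expect to demand the most care, is the interior $C^{1,\alpha}$ (Campanato) estimate for $\mathcal L_0$: one must make sure that its exponent and constant depend only on $\mu_0,\mu_2,d,p$ and not on $u_0$ itself, which comes from running the difference–quotient/De Giorgi--Nash--Moser scheme for the linearized equations and handling the fact that $\widehat A$ is merely Lipschitz rather than $C^1$ (a standard approximation point); the remaining steps are elementary. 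One may alternatively first rescale to $r=1$, since $\mathcal L_0$ — unlike $\mathcal L_\varepsilon$ — is covariant under $x\mapsto rx$ and $G$ is correspondingly scale invariant, and then quote the $C^{1,\alpha}$ estimate only at unit scale.
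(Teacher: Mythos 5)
Your argument is correct and follows essentially the same route as the paper: choose the first-order Taylor polynomial at the origin, invoke the interior $C^{1,\alpha}$ bound for $\nabla u_0$ obtained by applying De Giorgi--Nash--Moser to the linearized equation $\eqref{pde:4.1}$, and observe that since $u_0-Mx-c$ satisfies that same linearized equation for every $M,c$ the resulting Campanato estimate is controlled by $G(r,u_0)$. The only cosmetic differences are that you split off the $F$-term with its own exponent $1-d/p$ where the paper merges both contributions into the single exponent $\sigma=\min\{1-d/p,\alpha\}$, and your closing remark that one could instead rescale to $r=1$ (legitimate here, since $\mathcal L_0$ is scale-covariant, unlike $\mathcal L_\varepsilon$) is a valid shortcut the paper simply chooses not to use.
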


\begin{proof}
It is fine to assume $u_0\in H^2(B(0,r))$ and we have the following equation
\begin{equation}\label{pde:4.1}
\int_{B(0,r)}\nabla_{\xi_j}\widehat{A}^i(\nabla u_0)\nabla^2_{jk}u_0\nabla_i\phi dx =
-\int_{B(0,r)} F \nabla_k\phi dx
\end{equation}
for any $\phi\in H_0^1(B(0,r))$, and $k=1,\cdots,d$.
Let $\tilde{a}_{ij}(x) = \nabla_{\xi_j}\widehat{A}^i(\nabla u_0)$, which
will give a linear operator with the uniform ellipticity on account of
$\eqref{pri:2.3}$ and $\eqref{a:4}$. Hence, the De Giorgi-Nash-Moser theorem
tells us that for any $p>d$, there exists $\alpha\in(0,1)$ and $C>1$, depending only on
$\mu_0,\mu_2, d$ and $p$, such that
\begin{equation}\label{pri:4.2}
 [\nabla u_0]_{C^{0,\alpha}(B(0,r/2))}
 \leq Cr^{-\alpha}\bigg\{\frac{1}{r}\Big(\dashint_{B(0,r)}|u_0|^2\Big)^{1/2}
 + r\Big(\dashint_{B(0,r)}|F|^p\Big)^{1/p}\bigg\}
\end{equation}
(see for example \cite[Theorem 8.13]{MGLM}).
By the definition of $G(\theta r,u_0)$, we see that
\begin{equation}\label{f:4.7}
\begin{aligned}
G(\theta r,u_0) &\leq  \frac{1}{\theta r}
\bigg\{\Big(\dashint_{B(0,\theta r)}|u_0-M_0x-c_0|^2\Big)^{\frac{1}{2}}
+ \theta^2r^2\Big(\dashint_{B(0,\theta r)}|F|^p\Big)^{\frac{1}{p}}\bigg\} \\
&\leq \theta^{\sigma}\bigg\{r^\alpha\big[u_0\big]_{C^{1,\alpha}(B(0,r/4))}
+r\Big(\dashint_{B(0,r)}|F|^pdx\Big)^{\frac{1}{p}}
\bigg\},
\end{aligned}
\end{equation}
where $\sigma = \min\{1-d/p,\alpha\}$.
Here we choose $M_0 = \nabla u_0(0)$ and $c_0 = u_0(0)$.
For any
$M\in \mathbb{R}^{d}$ and $c\in\mathbb{R}$, let $\tilde{u}_0 = u_0- Mx-c$.
Obviously, it satisfies the equation $\eqref{pde:4.1}$ and by $\eqref{pri:4.2}$,
\begin{equation*}
\begin{aligned}
r^\alpha\big[u_0\big]_{C^{1,\alpha}(B(0,r/2))}
= r^\alpha\big[\tilde{u}_0\big]_{C^{1,\alpha}(B(0,r/2))}
\leq CG(r,u_0).
\end{aligned}
\end{equation*}
This together with $\eqref{f:4.7}$ leads to
\begin{equation*}
G(\theta r,u_0) \leq C\theta^\sigma  G(r,u_0),
\end{equation*}
and we are done.
\end{proof}

For simplicity, we also denote $\Phi(r)$ by
\begin{equation*}
\begin{aligned}
\Phi(r) = \frac{1}{r}\inf_{q\in\mathbb{R}}\bigg\{
\Big(\dashint_{D_{r}}|u_\varepsilon - c|^2 \Big)^{1/2}
+ r^2\Big(\dashint_{D_{r}} |F|^p\Big)^{1/p}\bigg\}.
\end{aligned}
\end{equation*}

\begin{lemma}\label{lemma:4.4}
Assume the same conditions as in Theorem $\ref{thm:4.1}$.
Let $u_\varepsilon$ be the solution of
$\mathcal{L}_\varepsilon(u_\varepsilon) = F$ in $B(0,2r)$.
Then we have
\begin{equation}
 G(\theta r, u_\varepsilon) \leq \frac{1}{2}G(r,u_\varepsilon)
 + C\left(\frac{\varepsilon}{r}\right)^{1/4}\Phi(2r)
\end{equation}
for any $\sqrt[3]{\varepsilon}\leq r<(1/4)$, where
$0<\theta<(1/4)$ has been given in Lemma $\ref{lemma:4.2}$.
\end{lemma}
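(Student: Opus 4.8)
The plan is to combine the approximating Lemma \ref{lemma:4.1} with the one-step decay estimate of Lemma \ref{lemma:4.2} for the homogenized equation. The only structural fact needed about $G$ is that, for fixed $\rho$, the map $v\mapsto G(\rho,v)$ is subadditive up to a term independent of $v$: since $\rho(\dashint_{B(0,\rho)}|F|^p)^{1/p}$ does not involve $v$, the triangle inequality in $L^2(B(0,\rho))$ gives
\[
G(\rho,v_1)\le G(\rho,v_2)+\frac{1}{\rho}\Big(\dashint_{B(0,\rho)}|v_1-v_2|^2\Big)^{1/2}
\]
for any $v_1,v_2$ (here $D_\rho=B(0,\rho)$ in the present interior setting).

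First I would manufacture a good homogenized approximant carrying the $\Phi(2r)$ structure on the right-hand side. For any $c\in\mathbb{R}$ the function $u_\varepsilon-c$ still solves $\mathcal{L}_\varepsilon(u_\varepsilon-c)=F$ in $B(0,2r)$, because $A(\cdot,z)$ depends only on the second variable; choose $c_\ast$ realizing, up to a factor $2$, the infimum $\inf_{c}(\dashint_{B(0,2r)}|u_\varepsilon-c|^2)^{1/2}$ and apply Lemma \ref{lemma:4.1} to $u_\varepsilon-c_\ast$. Adding $c_\ast$ back (and using that $\mathcal{L}_0$ likewise only sees the gradient) produces $w\in H^1(B(0,r))$ with $\mathcal{L}_0 w=F$ in $B(0,r)$ and
\[
\Big(\dashint_{B(0,r)}|u_\varepsilon-w|^2\Big)^{1/2}
\le C\Big(\frac{\varepsilon}{r}\Big)^{1/4}
\Big\{\inf_{c}\Big(\dashint_{B(0,2r)}|u_\varepsilon-c|^2\Big)^{1/2}+r^2\Big(\dashint_{B(0,2r)}|F|^2\Big)^{1/2}\Big\}.
\]
Since $p>d\ge 2$, H\"older's inequality gives $(\dashint_{B(0,2r)}|F|^2)^{1/2}\le(\dashint_{B(0,2r)}|F|^p)^{1/p}$, so the braces are at most $2r\,\Phi(2r)$, whence
\[
\Big(\dashint_{B(0,r)}|u_\varepsilon-w|^2\Big)^{1/2}\le C\Big(\frac{\varepsilon}{r}\Big)^{1/4}r\,\Phi(2r).
\]

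Now I would assemble the three pieces. By subadditivity and $\dashint_{B(0,\theta r)}\le\theta^{-d}\dashint_{B(0,r)}$,
\[
G(\theta r,u_\varepsilon)\le G(\theta r,w)+\frac{1}{\theta r}\Big(\dashint_{B(0,\theta r)}|u_\varepsilon-w|^2\Big)^{1/2}\le G(\theta r,w)+C\theta^{-1-d/2}\Big(\frac{\varepsilon}{r}\Big)^{1/4}\Phi(2r).
\]
Since $w$ solves $\mathcal{L}_0 w=F$ in $B(0,r)$ with $r\in(0,1)$, Lemma \ref{lemma:4.2} applies (its proof uses only the equation on $B(0,r)$ together with interior $C^{1,\alpha}$ bounds on $B(0,r/4)$) and yields $G(\theta r,w)\le\frac{1}{2}G(r,w)$. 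Applying subadditivity once more together with the approximation bound,
\[
G(r,w)\le G(r,u_\varepsilon)+\frac{1}{r}\Big(\dashint_{B(0,r)}|u_\varepsilon-w|^2\Big)^{1/2}\le G(r,u_\varepsilon)+C\Big(\frac{\varepsilon}{r}\Big)^{1/4}\Phi(2r).
\]
Chaining the three displays and absorbing the fixed powers of $\theta$ into $C$ gives exactly
\[
G(\theta r,u_\varepsilon)\le\frac{1}{2}G(r,u_\varepsilon)+C\Big(\frac{\varepsilon}{r}\Big)^{1/4}\Phi(2r),
\]
and the hypothesis $\sqrt[3]{\varepsilon}\le r<(1/4)$ is precisely what makes Lemma \ref{lemma:4.1} applicable at scale $r$.

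I do not expect a genuine analytic obstacle here: the nonlinearity has already been absorbed into Lemmas \ref{lemma:4.1} and \ref{lemma:4.2}, so what remains is bookkeeping. The two points that need care are (i) producing the quantity $\Phi(2r)$ rather than a bare $L^2$-norm on the right-hand side, which is what forces the constant-subtraction step and relies on $\mathcal{L}_\varepsilon$ and $\mathcal{L}_0$ depending only on the gradient; and (ii) matching the radii of the two lemmas---Lemma \ref{lemma:4.1} delivers $w$ only on $B(0,r)$, while Lemma \ref{lemma:4.2} is stated on $B(0,2r)$---and checking that the constants $C\theta^{-1-d/2}$ produced when passing from the ball $B(0,\theta r)$ to $B(0,r)$ are harmless, since $\theta$ is fixed once and for all in Lemma \ref{lemma:4.2}.
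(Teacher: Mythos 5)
Your proposal is correct and follows essentially the same route as the paper: take the homogenized approximant $w$ from Lemma \ref{lemma:4.1}, apply the triangle inequality (subadditivity of $G$) twice, use the one-step decay of Lemma \ref{lemma:4.2} for $w$, and invoke the freedom to replace $u_\varepsilon$ by $u_\varepsilon-c$ to upgrade the right-hand side from a bare $L^2$-norm to $\Phi(2r)$. The only cosmetic difference is that you perform the constant-subtraction up front when invoking Lemma \ref{lemma:4.1}, whereas the paper records the final display with $\|u_\varepsilon\|_{L^2}$ and observes the translation-invariance of the equation as a closing remark; the two orderings are equivalent.
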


\begin{proof}
Fix $r\in[\sqrt[3]{\varepsilon},1/4)$, let $w$ be a solution to
$\mathcal{L}_0 w = F$ in $B(0,r)$ as in Lemma $\ref{lemma:4.1}$.
Then we obtain
\begin{equation*}
\begin{aligned}
G(\theta r,u_\varepsilon)
&\leq \frac{1}{\theta r}\Big(\dashint_{B(0,\theta r)}
|u_\varepsilon - w|^2 \Big)^{\frac{1}{2}}
+ G(\theta r, w) \\
&\leq \frac{C}{r}\Big(\dashint_{B(0,r)}|u_\varepsilon - w|^2\Big)^{\frac{1}{2}}
+ \frac{1}{2}G(r, w)\\
&\leq \frac{1}{2}G(r, u_\varepsilon)
+ \frac{C}{r}\Big(\dashint_{B(0,r)}|u_\varepsilon - w|^2\Big)^{\frac{1}{2}}\\
&\leq  \frac{1}{2}G(r, u_\varepsilon) + C\Big(\frac{\varepsilon}{r}\Big)^{1/4}
 \bigg\{
 \frac{1}{r}\Big(\dashint_{B(0,2r)}|u_\varepsilon|^2 \Big)^{1/2}
 + r\Big(\dashint_{B(0,2r)} |F|^p\Big)^{1/p}\bigg\},
\end{aligned}
\end{equation*}
where we use the estimate $\eqref{pri:4.3}$ in the second inequality,
and $\eqref{pri:4.1}$ in the last one. Note that for any $c\in\mathbb{R}$,
$u_\varepsilon - c$ is still a solution of
$\mathcal{L}_\varepsilon u_\varepsilon = F$ in $B(0,2r)$, and the proof is complete.
\end{proof}

\begin{lemma}[iteration lemma]\label{lemma:4.3}
Let $\Psi(r)$ and $\psi(r)$ be two nonnegative continuous functions on the integral $(0,1]$.
Let $0<\varepsilon<\frac{1}{4}$. Suppose that there exists a constant $C_0$ such that
\begin{equation}\label{pri:4.4}
\left\{\begin{aligned}
  &\max_{r\leq t\leq 2r} \Psi(t) \leq C_0 \Psi(2r),\\
  &\max_{r\leq s,t\leq 2r} |\psi(t)-\psi(s)|\leq C_0 \Psi(2r),
  \end{aligned}\right.
\end{equation}
We further assume that
\begin{equation}\label{pri:4.5}
\Psi(\theta r)\leq \frac{1}{2}\Psi(r) + C_0w(\varepsilon/r)
\Big\{\Psi(2r)+\psi(2r)\Big\}
\end{equation}
holds for any $\varepsilon\leq r <(1/4)$, where $\theta\in(0,1/4)$ and $w$ is a nonnegative
increasing function in $[0,1]$ such that $w(0)=0$ and
\begin{equation*}
 \int_0^1 \frac{w(t)}{t} dt <\infty.
\end{equation*}
Then, we have
\begin{equation}
\max_{\varepsilon\leq r\leq 1}\Big\{\Psi(r)+ \psi(r)\Big\}
\leq C\Big\{\Psi(1)+\psi(1)\Big\},
\end{equation}
where $C$ depends only on $C_0, \theta$ and $w$.
\end{lemma}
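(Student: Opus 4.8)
The plan is to iterate the key inequality \eqref{pri:4.5} along the geometric sequence of radii $r_k = \theta^k$, stopping once $r_k$ falls below $\varepsilon$ (or rather $\sqrt[3]{\varepsilon}$ in our application), and to track the companion quantity $\psi$ via its Cauchy-type control in the second line of \eqref{pri:4.4}. First I would set $h(r) = \Psi(r) + \psi(r)$ and observe that \eqref{pri:4.5}, together with the first line of \eqref{pri:4.4}, gives a recursive bound of the form $\Psi(\theta r) \le \tfrac12 \Psi(r) + C_0 w(\varepsilon/r) h(2r)$; the goal is to absorb the $\tfrac12\Psi(r)$ term by summation and to show $h$ stays comparable to $h(1)$ on the whole range $[\varepsilon,1]$.

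The core of the argument is a discrete Gronwall / summation estimate. Writing $\Psi_k = \Psi(\theta^k)$ and noting $w(\varepsilon/\theta^k)$ is controlled because $\sum_k w(\varepsilon/\theta^k) \le C\int_0^1 \frac{w(t)}{t}\,dt =: C_w < \infty$ (this is the standard Dini-type bound: comparing the sum to the integral using monotonicity of $w$), I would prove by induction on $k$ that $\Psi_k \le C\big(\Psi_0 + \psi_0\big)$ for a constant $C$ depending only on $C_0, \theta, w$, where the key point is that the "error" contributions $\sum_{j\le k} 2^{j-k} C_0 w(\varepsilon/\theta^j) h(\theta^{j-1})$ form a convergent weighted sum once one knows $h(\theta^{j})$ is itself uniformly bounded — so the induction must be set up to bound $\sup_{j\le k} h(\theta^j)$ simultaneously. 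Concretely, let $H_k = \max_{0\le j\le k} \Psi_j$; from the recursion one gets $H_k \le \tfrac12 H_{k-1} + C_0\big(\sup_{j\le k} w(\varepsilon/\theta^j)\big)(H_{k-1} + \Psi_0 + \psi_0) + \Psi_0$, but since individual $w(\varepsilon/\theta^j)$ need not be small this crude bound fails; the fix is to split the sum at a threshold $j_*$ (so that $\sum_{j > j_*} w(\varepsilon/\theta^j) < 1/(4C_0)$, possible by the Dini condition) and handle the finitely many terms $j \le j_*$ by the trivial doubling bound $\Psi_j \le C_0^{j_*}\Psi_0$. This yields $\Psi_k \le C(\Psi_0 + \psi_0)$ uniformly.

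Once $\Psi$ is controlled at the dyadic radii $\theta^k$, the second line of \eqref{pri:4.4} upgrades this to control of $\psi$: for any $k$, $|\psi(\theta^k) - \psi(1)| \le \sum_{j=1}^{k} |\psi(\theta^j) - \psi(\theta^{j-1})| \le C_0 \sum_{j=1}^k \Psi(\theta^{j-1})$. This sum is $O(\Psi(1) + \psi(1))$ only if $\sum \Psi(\theta^{j-1})$ converges, which is not automatic from mere boundedness — so here I would need the sharper summable form: from the recursion, $\sum_{k} \Psi_k < \infty$ actually does follow, because the $\tfrac12$-contraction makes $\sum \Psi_k \le 2\Psi_0 + 2C_0 C_w \sup_j h(\theta^j) \le C(\Psi_0 + \psi_0)$. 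Thus $\psi(\theta^k) \le C(\Psi(1) + \psi(1))$ for all $k$, and finally the first line of \eqref{pri:4.4} interpolates between the dyadic radii: any $r \in [\varepsilon, 1]$ lies in some $[\theta^{k+1}, \theta^k]$, and $\Psi(r) \le C_0 \Psi(\theta^k) \le C(\Psi(1)+\psi(1))$, and similarly $\psi(r)$ is within $C_0\Psi(\theta^k)$ of $\psi(\theta^k)$. Taking the max over $r$ gives the conclusion.

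The main obstacle I anticipate is handling the fact that $w(\varepsilon/r)$ is not uniformly small over the relevant range — only its weighted sum is finite — so the naive "contraction plus small perturbation" argument does not close directly. The resolution, as sketched above, is the threshold splitting: choose $j_*$ (depending only on $C_0$ and $w$) past which the tail of $\sum w(\varepsilon/\theta^j)$ is small enough to be absorbed by the $\tfrac12$ contraction, and dispose of the initial segment $j \le j_*$ by the crude doubling estimate from \eqref{pri:4.4}, paying only a fixed constant $C_0^{j_*}$. Keeping the bookkeeping so that both $\Psi$ and $\psi$ are controlled at each stage of the induction — rather than trying to bound $\Psi$ first and $\psi$ afterward — is the delicate point, but it is routine once the splitting is in place.
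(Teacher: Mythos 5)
The paper does not prove this lemma but cites Shen \cite[Lemma 8.5]{S5}; your outline — iterate along $r_k=\theta^k$, control $\sum_k w(\varepsilon/\theta^k)$ by the Dini integral, telescope $\psi$ through the second line of \eqref{pri:4.4}, interpolate via the first line — is the right skeleton. However, the threshold splitting with which you propose to close the induction is backwards, and this is not a minor slip. You want a $j_*$, depending only on $C_0$ and $w$, such that $\sum_{j>j_*}w(\varepsilon/\theta^j)<1/(4C_0)$, and you would then handle $j\le j_*$ by the doubling bound $\Psi_j\le C_0^{j_*}\Psi_0$. But since $w$ is increasing with $w(0)=0$, the summands $w(\varepsilon/\theta^j)$ are \emph{smallest} for small $j$ (where $\varepsilon/\theta^j$ is close to $0$) and of order $w(1)$ precisely when $\theta^j$ approaches $\varepsilon$, i.e.\ at the top of the range. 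Comparing with the integral,
\[
\sum_{j>j_*}w(\varepsilon/\theta^j)\;\asymp\;\frac{1}{\log(1/\theta)}\int_{\varepsilon\theta^{-j_*}}^1\frac{w(t)}{t}\,dt\;\longrightarrow\;\frac{1}{\log(1/\theta)}\int_0^1\frac{w(t)}{t}\,dt
\]
as $\varepsilon\to 0$ with $j_*$ fixed; this limit is a fixed positive constant, not something that can be made $<1/(4C_0)$. So no such $j_*=j_*(C_0,w)$ exists: the small part of the sum is the initial segment, not the tail, and the ``finitely many'' exceptional indices would be the ones near $\theta^k\approx\varepsilon$, of which there are unboundedly many as $\varepsilon\to 0$.

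In fact no smallness is required; the correct closing device is a discrete Gronwall estimate that uses only $\sum_k w_k<\infty$. Writing $w_k=w(\varepsilon/\theta^k)$, $\Psi_k=\Psi(\theta^k)$, and using \eqref{pri:4.4} to move $\Psi(2\theta^k),\psi(2\theta^k)$ to the dyadic scale and telescope $\psi$ back to $\psi(1)$, \eqref{pri:4.5} becomes
\[
\Psi_{k+1}\le\tfrac12\Psi_k+C_1 w_k\Big(\Psi(1)+\psi(1)+\sum_{j<k}\Psi_j\Big).
\]
Set $S_k=\Psi_k+c\sum_{j<k}\Psi_j$ with $c\in(0,1/2)$ fixed. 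Then $S_{k+1}-S_k=\Psi_{k+1}-(1-c)\Psi_k\le -(\tfrac12-c)\Psi_k+\tfrac{C_1}{c}w_kS_k+C_1w_k(\Psi(1)+\psi(1))$, hence $S_{k+1}\le(1+\tfrac{C_1}{c}w_k)S_k+C_1w_k(\Psi(1)+\psi(1))$. Iterating and using $\prod_k(1+\tfrac{C_1}{c}w_k)\le\exp(\tfrac{C_1}{c}\sum_k w_k)$, which the Dini condition bounds by a constant depending only on $\theta$ and $w$, gives $S_k\le C(\Psi(1)+\psi(1))$ uniformly. This yields both $\Psi_k\le C(\Psi(1)+\psi(1))$ and $\sum_j\Psi_j\le C(\Psi(1)+\psi(1))$ at once — breaking the circularity you worried about — so the telescope controls $\psi(\theta^k)$ and \eqref{pri:4.4} carries the bound to all $r\in[\varepsilon,1]$. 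Replace your threshold-splitting paragraph by this Gronwall step; the rest of your outline stands.
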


\begin{proof}
The proof may be found in \cite[Lemma 8.5]{S5}.
\end{proof}

\noindent\textbf{Proof of Theorem $\ref{thm:1.2}$}.
It is fine to assume $0<\varepsilon<1/4$, otherwise it follows from the classical theory.
In view of Lemma $\ref{lemma:4.3}$,
we set $\Psi(r) = G(r,u_\varepsilon)$, $w(t) =t^{1/4}$.
To prove the desired estimate
$\eqref{pri:1.1}$, it is sufficient to verify $\eqref{pri:4.4}$
and $\eqref{pri:4.5}$.
Let $\psi(r) = |M_r|$, where $M_r$ is the matrix associated with $\Psi(r)$, respectively.
\begin{equation*}
\begin{aligned}
\Psi(r) &= \frac{1}{r}
\inf_{c\in\mathbb{R}}\Bigg\{\Big(\dashint_{B(0,r)}|u_\varepsilon-M_r x - c|^2\Big)^{\frac{1}{2}}
+ r^2\Big(\dashint_{B(0,r)}|F|^p\Big)^{\frac{1}{p}}\Bigg\},
\end{aligned}
\end{equation*}
Then we have
\begin{equation*}
 \Phi(r) \leq C\Big\{\Psi(2r) + \psi(2r)\Big\},
\end{equation*}
This coupled with Lemma $\ref{lemma:4.4}$ leads to
\begin{equation*}
\Psi(\theta r)\leq \frac{1}{2}\Psi(r) + C_0 w(\varepsilon/r)\Big\{\Psi(2r)+\psi(2r)\Big\},
\end{equation*}
for $\sqrt[3]{\varepsilon}\leq r<(1/4)$,
which satisfies the condition $\eqref{pri:4.5}$ in Lemma $\ref{lemma:4.3}$.
Let $t,s\in [r,2r]$, and $v(x)=(M_t-M_s)x$. It is clear to see $v$ is harmonic in $\mathbb{R}^d$,
and we have
\begin{equation}\label{f:4.8}
\begin{aligned}
|M_t-M_s|&\leq \frac{C}{r}\Big(\dashint_{B(0,r)}|(M_t-M_s)x-c|^2\Big)^{\frac{1}{2}}\\
&\leq \frac{C}{t}
\Big(\dashint_{B(0,r)}|u_\varepsilon - M_tx-c|^2\Big)^{\frac{1}{2}}
+ \frac{C}{s}\Big(\dashint_{B(0,r)}|u_\varepsilon - M_sx-c|^2\Big)^{\frac{1}{2}}\\
&\leq C\Big\{\Psi(t)+\Psi(s)\Big\}\leq C\Psi(2r),
\end{aligned}
\end{equation}
where the second and the last steps are based on the fact that $s,t\in[r,2r]$. Due to the same reason, it
is easy to obtain $\Psi(r)\leq C\Psi(2r)$, where we use the assumption $p>d$.
The estimate $\eqref{f:4.8}$ satisfies the condition
$\eqref{pri:4.2}$.
Hence, according to Lemma $\ref{lemma:4.2}$,
for any $r\in[\sqrt[3]{\varepsilon},1)$,
we have the following estimate
\begin{equation}
\begin{aligned}
\frac{1}{r}\inf_{c\in\mathbb{R}}\Big(\dashint_{B(0,r)}|u_\varepsilon - c|^2 \Big)^{\frac{1}{2}}
&\leq \Big\{\Psi(r) + \psi(r)\Big\}
\leq C\Big\{\Psi(1) + \psi(1)\Big\}\\
&\leq C\bigg\{\Big(\dashint_{B(0,1)}|\nabla u_\varepsilon|^2\Big)^{1/2}
+ \|F\|_{L^p(B(0,1))}\bigg\}
\end{aligned}
\end{equation}
Hence, the desired estimate $\eqref{pri:1.1}$ consequently follows from
the above estimate and Caccioppoli's inequality $\eqref{pri:2.14}$,
and we have completed the proof.
\qed

\begin{thm}[interior $W^{1,p}$ estimates]\label{thm:4.1}
Let $B(0,2)\subset\Omega$ and $2\leq p<\infty$.
Suppose that $\mathcal{L}_\varepsilon$ satisfies
$\eqref{a:1}$, $\eqref{a:2}$, and $\eqref{a:3}$.
Let $u_\varepsilon\in H^1(B(0,2))$ be a weak solution of
$\mathcal{L}_\varepsilon u_\varepsilon = \emph{div}(f)+F$ in $B(0,2)$,
where $f\in L^p(B(0,2);\mathbb{R}^d)$ and $F\in L^p(B(0,2))$.
Then $|\nabla u_\varepsilon|\in L^p_{loc}(B(0,2))$, and there holds
\begin{equation}\label{pri:4.6}
\begin{aligned}
\Big(\dashint_{B(0,r)} |\nabla u_\varepsilon|^p\Big)^{\frac{1}{p}}
\leq C_p\bigg\{\Big(\dashint_{B(0,2r)} |\nabla u_\varepsilon|^2\Big)^{\frac{1}{2}}
+ \Big(\dashint_{B(0,2r)} |f|^p\Big)^{\frac{1}{p}}
+r\Big(\dashint_{B(0,2r)} |F|^p\Big)^{\frac{1}{p}}\bigg\}
\end{aligned}
\end{equation}
for any $0<r<(1/2)$,
where $C_p$ depends only on $\mu_0,\mu_1,\mu_2,\tau,d$ and $p$.
\end{thm}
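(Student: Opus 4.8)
The plan is to deduce the estimate from a Calder\'on--Zygmund (real-variable) argument, whose crucial input is the uniform interior Lipschitz estimate for the \emph{homogeneous} equation, available here precisely because the smoothness condition $\eqref{a:3}$ is assumed in Theorem \ref{thm:4.1}. First I would record a reverse H\"older inequality for solutions of $\mathcal{L}_\varepsilon v=0$: if $v\in H^1(B(x_0,2\rho))$ solves $\mathcal{L}_\varepsilon v=0$ in $B(x_0,2\rho)\subset B(0,2)$, then a translation together with the admissible spatial rescaling $x\mapsto x_0+(2\rho)x$ (which turns $\mathcal{L}_\varepsilon$ into an operator of the same type with parameter $\varepsilon/(2\rho)$ and leaves the constant in $\eqref{pri:1.6}$ unchanged) yields
\begin{equation*}
 \Big(\dashint_{B(x_0,\rho/2)}|\nabla v|^q\Big)^{1/q}
 \leq C_q\Big(\dashint_{B(x_0,2\rho)}|\nabla v|^2\Big)^{1/2}
\end{equation*}
for every finite $q$, with $C_q$ independent of $\varepsilon$ and of the ball. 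For the range of scales $\rho\lesssim\varepsilon$ one may alternatively flatten the $\tau$-H\"older dependence of $A(\cdot,z)$ and quote the classical interior $W^{1,q}$ estimate of \cite{BW}, but $\eqref{pri:1.6}$ already covers all scales uniformly.

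Next I would set up the good/bad decomposition required by Shen's lemma. Fix a small ball $B=B(y,\rho)$ with $B(y,8\rho)\subset B(0,2)$, write $2B=B(y,2\rho)$, let $v$ satisfy $v-u_\varepsilon\in H^1_0(2B)$ and $\mathcal{L}_\varepsilon v=0$ in $2B$, and put $w=u_\varepsilon-v\in H^1_0(2B)$. Then $-\text{div}\big(A(x/\varepsilon,\nabla u_\varepsilon)-A(x/\varepsilon,\nabla v)\big)=\text{div} f+F$ in $2B$; testing this identity against $w$ and using the strong monotonicity and Lipschitz bounds $\eqref{a:2}$, together with Young's and the Sobolev--Poincar\'e inequalities, gives
\begin{equation*}
 \Big(\dashint_{2B}|\nabla w|^2\Big)^{1/2}
 \leq C\Big(\dashint_{2B}|f|^p\Big)^{1/p}
 + C\rho\Big(\dashint_{2B}|F|^p\Big)^{1/p},
\end{equation*}
where $p\geq2$ is used both to pass from $L^2$ to $L^p$ norms of $f$ and to absorb $F$ through the dual Sobolev exponent. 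On $2B$ one then has the pointwise bound $|\nabla u_\varepsilon|\leq|\nabla v|+|\nabla w|$, in which $|\nabla v|$ obeys the reverse H\"older inequality above with $\dashint_{2B}|\nabla v|^2\leq 2\dashint_{2B}|\nabla u_\varepsilon|^2+2\dashint_{2B}|\nabla w|^2$, while $|\nabla w|$ is controlled in $L^2$-average by the data.

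With these two ingredients, I would invoke the real-variable lemma of Z.\ Shen \cite{S3} with the pair of exponents $2$ and $q=p+1$ (any finite $q>p$ is legitimate by the first step): the family $(|\nabla v|,|\nabla w|)$, as $B$ ranges over all small balls contained in $B(0,2)$, verifies its hypotheses, so $|\nabla u_\varepsilon|\in L^p_{\mathrm{loc}}(B(0,2))$, and a routine covering and dilation argument upgrades the local bound to $\eqref{pri:4.6}$ for every $0<r<1/2$.

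The main obstacle is the nonlinearity, which bites in exactly two places. First, one cannot split the equation additively, so the correction $w=u_\varepsilon-v$ must be estimated \emph{directly} from monotonicity; this is possible only because the difference $A(x/\varepsilon,\nabla u_\varepsilon)-A(x/\varepsilon,\nabla v)$ is again in divergence form, equal to $-(\text{div} f+F)$, which makes $w$ an admissible test function and closes the energy estimate. Second, the reverse H\"older inequality for $v$ is not elementary: it rests on the full nonlinear large-scale regularity theory, i.e.\ on $\eqref{pri:1.6}$ and hence on Theorem \ref{thm:1.2}. By contrast, Shen's real-variable lemma is insensitive to the nonlinearity, being a statement about the scalar function $|\nabla u_\varepsilon|$ and its decomposition alone; all the genuinely nonlinear work lies in verifying its hypotheses, which is exactly the novelty flagged in the fourth remark.
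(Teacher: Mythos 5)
Your proof is correct and follows essentially the same two-step plan as the paper: first a reverse H\"older inequality for $\varepsilon$-homogeneous solutions (combining the large-scale Lipschitz estimate $\eqref{pri:1.6}$ with the classical small-scale $W^{1,q}$ theory from \cite{BW}), then Shen's real-variable argument applied to the decomposition $\nabla u_\varepsilon = \nabla v + \nabla w$ with $\mathcal{L}_\varepsilon v = 0$ in $2B$ and $w$ controlled directly through strong monotonicity. The only cosmetic difference is that you obtain the reverse H\"older by rescaling $\eqref{pri:1.6}$ down to each ball, whereas the paper invokes \cite{BW} only at the threshold scale $\sqrt[3]{\varepsilon}$ and then uses $\eqref{pri:1.1}$ together with a covering argument to reach all scales; your rescaling is legitimate here, since translation and dilation preserve the structural constants $\mu_0,\mu_1,\mu_2,\tau$ and the unit periodicity while replacing $\varepsilon$ by $\varepsilon/(2\rho)$, so both routes are two phrasings of the same combination of ingredients.
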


\begin{proof}
Step 1. Consider the estimate $\eqref{pri:4.6}$ in the case of $f=0$ and $F=0$.
Since the assumption $\eqref{a:4}$ satisfies $(\delta,R)$-vanishing condition of
$A(\cdot,\xi)$, it follows from \cite[Lemma 2.5]{BW} and the Lipschitz estimate
$\eqref{pri:1.1}$ that
\begin{equation*}
\Big(\dashint_{B(0,\sqrt[3]{\varepsilon})}|\nabla u_\varepsilon|^p\Big)^{1/p}
\leq C_p\Big(\dashint_{B(0,2\sqrt[3]{\varepsilon})}|\nabla u_\varepsilon|^2\Big)^{1/2}
\leq C_p\Big(\dashint_{B(0,R)}|\nabla u_\varepsilon|^2\Big)^{1/2},
\end{equation*}
where $2\sqrt[3]{\varepsilon}\leq R<1$.
Then by a simple covering argument we may have
\begin{equation}\label{f:4.10}
\Big(\dashint_{B(0,r)}|\nabla u_\varepsilon|^p\Big)^{1/p}
\leq C_p\Big(\dashint_{B(0,2r)}|\nabla u_\varepsilon|^2\Big)^{1/2}
\end{equation}
for any $0<r<(1/2)$, where $C_p$ depends on $\mu_0,\mu_1,\mu_2,\tau,d$ and $p$.

Step 2. We plan to
apply a real method (see \cite[Theorem 3.2.3]{S4}) to handle the case of $f\not = 0$.
To do so, construct a solution $v_\varepsilon\in H^1(B(0,2r))$ such that
\begin{equation*}
\left\{\begin{aligned}
\mathcal{L}_\varepsilon v_\varepsilon &= 0 &\quad&\text{in}~~B(0,2r),\\
v_\varepsilon &= u_\varepsilon  &\quad&\text{on}~\partial B(0,2r).
\end{aligned}\right.
\end{equation*}
Thus by definition there holds
\begin{equation*}
\int_{B(0,2r)}\big[A(x/\varepsilon,\nabla u_\varepsilon)-
A(x/\varepsilon,\nabla v_\varepsilon)\big]\nabla\varphi dz
= -\int_{B(0,2r)}f\cdot\nabla\varphi dz + \int_{B(0,2r)}F\varphi dz
\end{equation*}
for any $\varphi\in H_0^1(B(0,2r))$. Set $\varphi = u_\varepsilon - v_\varepsilon$,
and then in view of $H^1$ theory (see Theorem $\ref{thm:2.1}$) one may derive
\begin{equation}\label{f:4.9}
\int_{B(0,2r)} |\nabla u_\varepsilon - \nabla v_\varepsilon|^2 dz
\leq C(\mu_0,d)\bigg\{\int_{B(0,2r)} |f|^2 dz
+r^2\int_{B(0,2r)} |F|^2 dz\bigg\},
\end{equation}
where we use the assumption $\eqref{a:2}$ and Poincar\'e's inequality
coupled with Young's inequality. As in $\eqref{def:2.4}$, let
\begin{equation*}
  \tilde{F}(y) = |F(y)|\text{dist}(y,M_{3r}^{4r}(0))
+ |f(y)|,
\end{equation*}
where $M_{3r}^{4r}(0) = B(0,4r)\setminus B(0,3r)$.
The estimate $\eqref{f:4.9}$ together with the estimate $\eqref{f:4.10}$ gives
\begin{equation}\label{f:4.11}
\begin{aligned}
\Big(\dashint_{B(0,r)}|\nabla v_\varepsilon|^p\Big)^{1/p}
&\leq C_p\Big(\dashint_{B(0,2r)}|\nabla v_\varepsilon|^2\Big)^{1/2} \\
&\leq C_p\bigg\{\dashint_{B(0,2r)}|\nabla u_\varepsilon|^2\Big)^{1/2}
+ \Big(\dashint_{B(0,2r)} |\tilde{F}|^2\Big)^{1/2} \bigg\}.
\end{aligned}
\end{equation}

Consequently, the stated estimate $\eqref{pri:4.6}$ follows from the estimates $\eqref{f:4.9}$,
$\eqref{f:4.11}$, and \cite[Theorem 3.2.3]{S4},
and we may end the proof here.
\end{proof}

\section{Boundary estimates}

\begin{lemma}\label{lemma:5.3}
Let $\Omega$ be a bounded Lipschitz domain. Suppose that $A$ satisfies
$\eqref{a:1}$ and $\eqref{a:2}$. Let $u_\varepsilon\in H^1(B(x,1)\cap\Omega)$
be a weak solution of $\mathcal{L}_\varepsilon u_\varepsilon = \emph{div}(f)+F$
in $B(x,1)\cap\Omega$ with
$u_\varepsilon = 0$ on $B(x,1)\cap\partial\Omega$, where $x\in\partial\Omega$,
$f\in L^{2}(\Omega;\mathbb{R}^d)$ and $F\in L^2(\Omega)$.
Then for any $0<r\leq (1/2)$,
we have
\begin{equation}\label{pri:5.1}
\Big(\dashint_{B(x,r)\cap \Omega}|\nabla u_\varepsilon|^2 \Big)^{\frac{1}{2}}
\leq C
\bigg\{ \frac{1}{r}\Big(\dashint_{B(x,2r)\cap\Omega}
|u_\varepsilon|^2\Big)^{\frac{1}{2}}
+\Big(\dashint_{B(x,2r)\cap\Omega}|f|^2\Big)^{\frac{1}{2}}
+ r\Big(\dashint_{B(x,2r)\cap\Omega}|F|^2\Big)^{\frac{1}{2}}\bigg\},
\end{equation}
where $C_\alpha$ depends on $\mu_0,\mu_2,d,\alpha$ and the character of $\Omega$.
\end{lemma}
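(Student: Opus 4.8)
The plan is to establish \eqref{pri:5.1} by a boundary Caccioppoli argument in the spirit of the interior Lemma \ref{lemma:2.3}, relying only on the structure conditions \eqref{a:1} and \eqref{a:2}; the smoothness condition \eqref{a:3} and the periodicity play no role. Throughout, write $D_\rho=B(x,\rho)\cap\Omega$. First I would pick a cut-off function $\psi\in C_0^1(B(x,2r))$ with $0\le\psi\le1$, $\psi\equiv1$ on $B(x,r)$ and $|\nabla\psi|\le C/r$, and insert $\phi=\psi^2 u_\varepsilon$ into the weak formulation of $\mathcal{L}_\varepsilon u_\varepsilon=\text{div}(f)+F$. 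Since $2r\le1$ we have $B(x,2r)\cap\partial\Omega\subset B(x,1)\cap\partial\Omega$, where $u_\varepsilon$ vanishes in the trace sense, while $\psi$ vanishes near $\partial B(x,2r)$; hence $\phi\in H_0^1(D_{2r})$ is an admissible test function. Using $\nabla\phi=\psi^2\nabla u_\varepsilon+2\psi u_\varepsilon\nabla\psi$, this gives
\begin{equation*}
\int_{D_{2r}}\psi^2 A(y/\varepsilon,\nabla u_\varepsilon)\cdot\nabla u_\varepsilon\,dy
=-2\int_{D_{2r}}\psi u_\varepsilon\,A(y/\varepsilon,\nabla u_\varepsilon)\cdot\nabla\psi\,dy
-\int_{D_{2r}}f\cdot\nabla\phi\,dy+\int_{D_{2r}}F\phi\,dy.
\end{equation*}

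Next I would exploit \eqref{a:1} and \eqref{a:2}: taking $z'=0$ in the coerciveness inequality and using $A(y,0)=0$ yields the pointwise bounds $A(y,z)\cdot z\ge\mu_0|z|^2$ and $|A(y,z)|\le\mu_2|z|$. Consequently the left-hand side is bounded below by $\mu_0\int_{D_{2r}}\psi^2|\nabla u_\varepsilon|^2\,dy$, and I would bound the three terms on the right by Cauchy--Schwarz and Young's inequality, extracting in each case a small multiple of $\int_{D_{2r}}\psi^2|\nabla u_\varepsilon|^2\,dy$ (to be absorbed on the left) together with lower-order quantities controlled by $r^{-2}\int_{D_{2r}}|u_\varepsilon|^2$ and $\int_{D_{2r}}|f|^2$, and — after applying Young's inequality with weight $r^2$ to $\big|\int_{D_{2r}}F\psi^2 u_\varepsilon\,dy\big|\le\|F\|_{L^2(D_{2r})}\|u_\varepsilon\|_{L^2(D_{2r})}$ — by $r^2\int_{D_{2r}}|F|^2$. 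Absorbing the gradient terms into the left side and using $\psi\equiv1$ on $B(x,r)$ produces
\begin{equation*}
\int_{D_r}|\nabla u_\varepsilon|^2\,dy
\le\frac{C}{r^2}\int_{D_{2r}}|u_\varepsilon|^2\,dy
+C\int_{D_{2r}}|f|^2\,dy
+Cr^2\int_{D_{2r}}|F|^2\,dy,
\end{equation*}
with $C$ depending only on $\mu_0,\mu_2,d$.

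Finally, dividing by $|D_{2r}|$, invoking the Lipschitz-domain volume comparison $c_0|B(x,\rho)|\le|D_\rho|\le|B(x,\rho)|$ (the constant $c_0$ already used in the proof of Lemma \ref{lemma:2.2}), so that $|D_r|$ and $|D_{2r}|$ are comparable up to a constant depending on the character of $\Omega$, and taking square roots gives \eqref{pri:5.1}. I do not expect a genuine obstacle here: the estimate is a routine energy inequality, and the only points demanding a little care are verifying that $\psi^2 u_\varepsilon$ is admissible even though $u_\varepsilon$ is known to vanish only on the portion $B(x,1)\cap\partial\Omega$ of the boundary, and choosing the weight in Young's inequality so as to land the lower-order term in the precise form $r\big(\dashint_{D_{2r}}|F|^2\big)^{1/2}$ appearing in \eqref{pri:5.1}.
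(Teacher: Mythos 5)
Your proposal is correct and coincides with the approach the paper intends: the paper omits this proof, stating only that it is ``quite similar'' to the interior Caccioppoli inequality (Lemma \ref{lemma:2.3}), which is proved by testing the weak formulation against $\psi^2(u_\varepsilon - c)$ and applying \eqref{a:1}, \eqref{a:2} with Young's inequality — and that is exactly what you do, with the two natural adjustments for the boundary setting: dropping the constant $c$ (since $u_\varepsilon$ already vanishes on $B(x,1)\cap\partial\Omega$, so $\psi^2 u_\varepsilon\in H_0^1(D_{2r})$) and carrying along the extra $\text{div}(f)$ term. Your verification that the test function is admissible, the choice of the $r^2$-weight in Young's inequality for the $F$-term, and the use of the Lipschitz volume comparison $c_0|B|\le|B\cap\Omega|$ to pass from the un-averaged inequality to the averaged one are all exactly the details the paper has in mind.
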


\begin{proof}
The proof of this result is quite similar to that given earlier for
the interior one (see Lemma $\ref{lemma:2.3}$), and so is omitted.
\end{proof}

\begin{lemma}[approximating lemma II]\label{lemma:5.1}
Let $\Omega$ be a bounded Lipschitz domain and $\varepsilon\leq r\leq(1/2)$.
Suppose that $\mathcal{L}_\varepsilon$ satisfies
the conditions $\eqref{a:1}$ and $\eqref{a:2}$.
Let $u_\varepsilon\in H^1(B(0,2r)\cap\Omega)$ be a weak solution of
$\mathcal{L}_\varepsilon u_\varepsilon  = F$ in $B(0,2r)\cap\Omega$ with
$u_\varepsilon = 0$ on $B(0,2r)\cap\partial\Omega$. Then there exists
$w\in H^1(B(0,r)\cap\Omega)$ and some $p>2$ such that
there holds
\begin{equation}\label{pri:5.2}
\begin{aligned}
 \Big(\dashint_{B(0,r)\cap\Omega} |u_\varepsilon - w|^2  \Big)^{1/2}
 \leq C\left(\frac{\varepsilon}{r}\right)^{\sigma}
 \bigg\{
 &\Big(\dashint_{B(0,2r)\cap\Omega}|u_\varepsilon|^2 \Big)^{1/2}
 + r^2\Big(\dashint_{B(0,2r)\cap\Omega} |F|^p\Big)^{1/p}\bigg\},
\end{aligned}
\end{equation}
where $\sigma=1/2-1/p$, and $C$ depends on $\mu_0,\mu_2, d$ and the character of
$\Omega$.
\end{lemma}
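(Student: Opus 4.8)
The plan is to carry over the scheme of the interior approximating lemma (Lemma~\ref{lemma:4.1}) to the cap $B(0,2r)\cap\Omega$, with three substitutions: the interior Caccioppoli inequality is replaced by its boundary version (Lemma~\ref{lemma:5.3}); the ball convergence rate (Theorem~\ref{thm:3.1}) is replaced by the Lipschitz-domain convergence rate (Theorem~\ref{thm:3.2})---this is precisely where the Meyers exponent $p>2$ and the rate $\sigma=1/2-1/p$ are brought in; and the elementary properties of harmonic functions on balls are replaced by the corresponding nontangential maximal function estimates on Lipschitz domains.

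First I would fix, using Lemma~\ref{lemma:5.3} with $f=0$ and the co-area formula, a good radius $t\in[r,3r/2]$ for which the trace of $u_\varepsilon$ on the spherical cap $\Gamma_t:=\partial B(0,t)\cap\Omega$ lies in $H^1(\Gamma_t)$ and
\begin{equation*}
\int_{\Gamma_t}|\nabla u_\varepsilon|^2\,dS \le \frac{C}{r}\int_{B(0,3r/2)\cap\Omega}|\nabla u_\varepsilon|^2 \le C\Big\{r^{-3}\int_{B(0,2r)\cap\Omega}|u_\varepsilon|^2+r\int_{B(0,2r)\cap\Omega}|F|^2\Big\}.
\end{equation*}
Set $D_t:=B(0,t)\cap\Omega$; for $t\in[r,3r/2]$ these caps form a family of Lipschitz domains whose character is bounded uniformly in $t$ and $r$, so that Theorem~\ref{thm:3.2}, Lemma~\ref{lemma:5.3}, the $H^{1/2}(\partial D_t)\to H^1(D_t)$ solvability estimate for harmonic functions, and the nontangential maximal function bound all apply on $D_t$ with constants independent of $t$, $r$, $\varepsilon$. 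Since $u_\varepsilon=0$ on $B(0,2r)\cap\partial\Omega$, its trace on $\Gamma_t$ vanishes near the edge $\partial\Omega\cap\partial B(0,t)$ and may therefore be mollified along $\Gamma_t$ at a scale $\delta\in(0,r]$, via the surface smoothing of the remark around \eqref{pri:2.10}--\eqref{pri:2.13}, to produce $(u_\varepsilon)_\delta$ that still matches the zero data on $B(0,t)\cap\partial\Omega$ and satisfies $\|(u_\varepsilon)_\delta-u_\varepsilon\|_{H^{1/2}(\Gamma_t)}\le C\delta^{1/2}\|u_\varepsilon\|_{H^1(\Gamma_t)}$ together with $\|(u_\varepsilon)_\delta\|_{W^{1-1/p,p}(\partial D_t)}\le C\delta^{-\gamma}\|u_\varepsilon\|_{H^1(\Gamma_t)}$ for some fixed $\gamma=\gamma(d,p)\ge 0$, the exponent $p>2$ being chosen close enough to $2$ that $H^1(\Gamma_t)\hookrightarrow W^{1-1/p,p}(\Gamma_t)$ on the $(d-1)$-dimensional cap. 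Then, as in Lemma~\ref{lemma:4.1}, I introduce $v_\varepsilon\in H^1(D_t)$ with $\mathcal{L}_\varepsilon v_\varepsilon=F$ in $D_t$ and $v_\varepsilon=(u_\varepsilon)_\delta$ on $\partial D_t$, the homogenized solution $w\in H^1(D_t)$ with $\mathcal{L}_0 w=F$ in $D_t$ and the same boundary data, and the harmonic function $z_\varepsilon$ in $D_t$ with $z_\varepsilon=u_\varepsilon-(u_\varepsilon)_\delta$ on $\partial D_t$, and I split
\begin{equation*}
u_\varepsilon-w=(u_\varepsilon-v_\varepsilon-z_\varepsilon)+(v_\varepsilon-w)+z_\varepsilon .
\end{equation*}

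The middle term is estimated by applying the Lipschitz-domain rate \eqref{pri:3.4} on $D_t$, which yields $(\varepsilon/r)^\sigma$ times $r^{1+d\sigma}\big(r\|F\|_{L^p(D_t)}+\|(u_\varepsilon)_\delta\|_{W^{1-1/p,p}(\partial D_t)}\big)$, followed by the mollification bound and the good-radius estimate above. For the first term, testing the equation satisfied by $u_\varepsilon-v_\varepsilon$ against $u_\varepsilon-v_\varepsilon-z_\varepsilon\in H_0^1(D_t)$ and using \eqref{a:2} gives $\|\nabla(u_\varepsilon-v_\varepsilon-z_\varepsilon)\|_{L^2(D_t)}\le C\|\nabla z_\varepsilon\|_{L^2(D_t)}$, after which Poincar\'e's inequality applies; and $\|\nabla z_\varepsilon\|_{L^2(D_t)}$ and $\|z_\varepsilon\|_{L^2(D_t)}$ are controlled, through the harmonic solvability and nontangential maximal function estimates exactly as in \eqref{f:4.5}--\eqref{f:4.6}, by a positive power of $\delta$ times $\|u_\varepsilon\|_{H^1(\Gamma_t)}$-type quantities. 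Adding the three contributions, dividing by $|B(0,r)|^{1/2}$, using the good radius to replace $\|u_\varepsilon\|_{H^1(\Gamma_t)}$ by $\|u_\varepsilon\|_{L^2(B(0,2r)\cap\Omega)}$ and $\|F\|_{L^p}$, and finally choosing $\delta$ as a suitable power of $\varepsilon/r$ so as to balance the $\delta^{1/2}$ and $\delta^{-\gamma}(\varepsilon/r)^\sigma$ contributions (just as $\delta=(\varepsilon/r)^{1/2}$ is chosen in Lemma~\ref{lemma:4.1}), produces \eqref{pri:5.2} with the resulting exponent, which is again of the form $1/2-1/p$ for some $p>2$.

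The step I expect to be the main obstacle is the verification that the caps $D_t=B(0,t)\cap\Omega$, $t\in[r,3r/2]$, are Lipschitz domains with uniformly bounded character---and hence that Theorem~\ref{thm:3.2}, Lemma~\ref{lemma:5.3}, the harmonic extension estimate and the nontangential maximal function bound all hold on $D_t$ with constants independent of $t$, $r$ and $\varepsilon$. This is the only genuinely new ingredient compared with the interior Lemma~\ref{lemma:4.1}; the remaining points---mollifying along the mixed boundary $\Gamma_t\cup(B(0,t)\cap\partial\Omega)$ without disturbing the zero Dirichlet data on $\partial\Omega$, and the bookkeeping of the $r$- and $\delta$-powers in the final optimization---are routine once the geometry is under control.
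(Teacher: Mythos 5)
Your proposal reaches the right conclusion, but it imports machinery from the interior Lemma~\ref{lemma:4.1} that the paper shows is unnecessary in the boundary case, and in doing so it would actually degrade the exponent. The paper's proof of Lemma~\ref{lemma:5.1} is a one-step argument: by the co-area formula together with the boundary Caccioppoli inequality~\eqref{pri:5.1}, pick a good radius $r_0\in[3r/2,2r]$ with
$\int_{\partial D_{r_0}\setminus\Delta_{r_0}}|\nabla u_\varepsilon|^2\,dS \le Cr^{-2}\int_{D_{2r}}|u_\varepsilon|^2 + Cr^2\int_{D_{2r}}|F|^2$;
take $w$ solving $\mathcal{L}_0 w=F$ in $D_{r_0}$ with $w=u_\varepsilon$ on $\partial D_{r_0}$ (no intermediate $v_\varepsilon$, no $z_\varepsilon$); and then apply Theorem~\ref{thm:3.2} to $u_\varepsilon$ and $w$ on $D_{r_0}$, using the embedding $\|u_\varepsilon\|_{W^{1-1/p,p}(\partial D_{r_0})}\le C\|u_\varepsilon\|_{H^1(\partial D_{r_0})}$ which holds on the $(d-1)$-dimensional surface for $2<p\le 2d/(d-1)$ (intersected with the Meyers range). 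That single application delivers $(\varepsilon/r)^\sigma$ with $\sigma=1/2-1/p$ directly.

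The mollification $(u_\varepsilon)_\delta$, the auxiliary harmonic $z_\varepsilon$, the nontangential maximal function estimates, and the $\delta$-optimization are all present in Lemma~\ref{lemma:4.1} for exactly one reason: Theorem~\ref{thm:3.1} demands $H^{3/2}(\partial B)$ boundary data, which the trace $u_\varepsilon\in H^1(\partial B(0,r_0))$ does not have, so one must smooth at a cost of $\delta^{-1/2}$ and pay a $\delta^{1/2}$ error. In the boundary lemma the relevant rate theorem is~\ref{thm:3.2}, whose boundary space $W^{1-1/p,p}(\partial D_{r_0})$ is \emph{weaker} than $H^1(\partial D_{r_0})$, so the trace is already admissible with $\gamma=0$ in your notation. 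You observe the embedding yourself, yet still mollify; if $\gamma>0$ your $\delta$-balancing would produce an exponent strictly smaller than $1/2-1/p$, and if $\gamma=0$ the optimal choice is $\delta\to 0$, which just collapses your three-term split to the paper's one-step argument. So the proposal is not wrong in outcome, but the middle layer of it is dead weight. Your concern about the uniform Lipschitz character of the caps $D_t=B(0,t)\cap\Omega$ for $t\in[r,3r/2]$ is a legitimate geometric point; the paper treats it as standard and does not comment, and it is indeed the only genuinely geometric input beyond what Lemma~\ref{lemma:4.1} needed.
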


\begin{proof}
Let $D_r = B(0,r)\cap\Omega$ and $\Delta_r = B(0,r)\cap\partial\Omega$.
In view of the co-area formula and Caccioppoli's inequality
$\eqref{pri:5.1}$, there exists $r_0\in[3r/2,2r]$
such that
\begin{equation}\label{f:5.1}
\int_{\partial D_{r_0}\setminus\Delta_{r_0}}|\nabla u_\varepsilon|^2 dS
\leq \frac{C}{r^2}\int_{D_{2r}}|u_\varepsilon|^2 dx
+ Cr^2 \int_{D_{2r}}|F|^2 dx.
\end{equation}
Then we construct $w\in H^1(D_{r_0})$ such that $\mathcal{L}_0 w = F$ in $D_{r_0}$ and
$w = u_\varepsilon$ on $\partial D_{r_0}$. Note that we choose $p>2$ such that
$\|u_\varepsilon\|_{W^{1-1/p,p}(\partial D_{r_0})}\leq C\|u_\varepsilon\|_{H^1(\partial D_{r_0})}$, and we have
\begin{equation*}
\begin{aligned}
\|u_\varepsilon - w\|_{L^2(D_r)}
&\leq \|u_\varepsilon - w\|_{L^2(D_{r_0})}\\
&\leq Cr^{d\sigma}\Big(\frac{\varepsilon}{r}\Big)^{\sigma}
\bigg\{r^2\|F\|_{L^p(D_{2r})}
+r\|\nabla u_\varepsilon\|_{H^1(\partial D_{r_0}\setminus\Delta_{r_0})}\bigg\}\\
&\leq  Cr^{d\sigma}\Big(\frac{\varepsilon}{r}\Big)^{\sigma}
\bigg\{\|u_\varepsilon\|_{L^2(D_{2r})} + r^2\|F\|_{L^p(D_{2r})}\bigg\}
\end{aligned}
\end{equation*}
where $\sigma = 1/2-1/p$, and we use the estimate $\eqref{f:5.1}$ in the last step. This implies the stated estimate
$\eqref{pri:5.2}$, and we have completed the proof.
\end{proof}

\begin{lemma}\label{lemma:5.2}
Let $\Omega$ be a bounded $C^1$ domain and $0<r<(r_0/4)$.
Suppose that $A$ satisfies
$\eqref{a:1}$ and $\eqref{a:2}$.
Let $w$ be a weak solution of
$\mathcal{L}_0 w = F$ in $\Omega\cap B(0,2r)$ and $w = 0$ on $B(0,2r)\cap\partial\Omega$,
where $F\in L^\infty(\Omega)$.
Then for any $\beta\in(0,1)$ we have
\begin{equation}\label{pri:5.3}
 \big[w\big]_{C^{0,\beta}(B(0,r)\cap\Omega)}
 \leq C_{\beta}r^{-\beta}
 \bigg\{\Big(\dashint_{B(0,2r)\cap\Omega}|w|^2\Big)^{1/2}
 +r^2\big\|F\big\|_{L^\infty(B(0,2r\cap\Omega))}\bigg\},
\end{equation}
where $C_\beta$ depends on $\mu_0,\mu_2,d,\beta$ and the character of $\Omega$.
\end{lemma}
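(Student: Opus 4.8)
The plan is to localize and flatten the boundary, reduce to a model half-ball configuration, and then run a barrier-plus-iteration argument for the homogenized operator $\mathcal{L}_0$, treating the interior contribution separately by the classical gradient estimate. First I would cover $B(0,r)\cap\partial\Omega$ by small neighbourhoods and, in each, apply the change of variables furnished by the $C^1$-parametrization of $\partial\Omega$. Under it $\mathcal{L}_0 w=F$ becomes $-\text{div}\,\tilde A(x,\nabla w)=\tilde F$ on a half-ball $B^+$, with $w=0$ on the flat part $T$, where $\tilde A$ still obeys the monotonicity and Lipschitz-growth bounds $\eqref{pri:2.3}$ with an $x$-dependence that is merely continuous, with a modulus inherited from the atlas — harmless, since only the structural conditions and De Giorgi–Nash-type arguments are used below. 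Thus it suffices to prove the model estimate on $B^+$ with $w=0$ on $T$.

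Next I would split the seminorm $[w]_{C^{0,\beta}}$ into an interior part and a boundary part. For the interior part, at a point $x$ with $\delta:=\text{dist}(x,\partial\Omega)$ I would use the classical interior gradient estimate for quasilinear operators (equivalently: $\partial_k w$ solves a linear equation with bounded measurable, uniformly elliptic coefficients $\int_0^1\nabla_\xi\widehat A(t\nabla w)\,dt$, elliptic by $\eqref{a:4}$, so De Giorgi–Nash applies), giving $\|\nabla w\|_{L^\infty(B_{\delta/4}(x))}\le C\delta^{-1}\sup_{B_{\delta/2}(x)}|w|+C\delta\|F\|_{L^\infty}$. Hence everything reduces to controlling $M(\rho):=\sup_{B_\rho(x_0)\cap B^+}|w|$ for $x_0\in T$: once we show $M(\rho)\le C(\rho/r)^\beta\bigl\{\bigl(\dashint_{B(0,2r)\cap\Omega}|w|^2\bigr)^{1/2}+r^2\|F\|_{L^\infty}\bigr\}$, an elementary interpolation between the gradient bound and the vanishing of $w$ on $T$ upgrades it to the full seminorm: for $|x-y|\le\frac14\text{dist}(x,\partial\Omega)$ one writes $|w(x)-w(y)|\le|x-y|\,\|\nabla w\|_\infty\le C|x-y|^{\beta}\bigl(\text{dist}(x,\partial\Omega)\bigr)^{\beta-1}(\cdots)$, the factor $(\text{dist})^{\beta-1}$ being exactly cancelled by $M(\text{dist})$, while for $x,y$ farther apart one bounds $|w(x)|,|w(y)|$ directly by $M$.

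The heart of the matter is the boundary oscillation-decay estimate
\[
 M(\theta\rho)\ \le\ \theta^{\beta}\,M(\rho)\ +\ C\rho^{2}\|F\|_{L^{\infty}},\qquad 0<\rho\le r,
\]
for a small $\theta=\theta(\beta,\mu_0,d)$, from which the desired bound on $M$ follows by a standard iteration lemma (the exponent $2$ beats $\beta<1$). I would get it by a barrier argument that genuinely exploits the flatness: construct $\phi_\rho$, a weak supersolution of $\mathcal{L}_0\phi\ge F$ on $B_\rho(x_0)\cap B^+$ which dominates $|w|$ on the boundary of that region and satisfies $\phi_\rho\le\theta^{\beta}M(\rho)+C\rho^{2}\|F\|_{L^{\infty}}$ on $B_{\theta\rho}(x_0)$; the comparison principle for the monotone operator $\mathcal{L}_0$ (test with $(w\mp\phi_\rho)^{\pm}$, using $\eqref{pri:2.3}$) then gives the display. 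The natural candidate is $c\,[d_E(x)]^{\beta}$, a power of the distance to a convex exterior region $E$ fitting $T$, plus a lower-order linear/quadratic correction absorbing $F$: since $d_E$ is concave, $D^2([d_E]^{\beta})\le 0$, and for $C^2$ test functions $\mathcal{L}_0\phi=-\sum_{i,j}\nabla_{\xi_j}\widehat A_i(\nabla\phi)\,\partial_{ij}\phi$ with $(\nabla_{\xi_j}\widehat A_i)$ of symmetric part $\ge\mu_0 I$ and norm $\le C$ by $\eqref{pri:2.3}$–$\eqref{a:4}$, so $\phi_\rho$ is a strict supersolution on the region where $d_E$ is small; the mild nonsmoothness of $\widehat A$ is dealt with by mollifying it in $\xi$ and passing to the limit. (Alternatively, one may simply quote classical boundary regularity for quasilinear equations on $C^1$ domains.)

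The main obstacle is exactly this barrier step: producing a supersolution of the nonlinear $\mathcal{L}_0$ whose decay near $T$ is an \emph{arbitrary} $\beta<1$, rather than merely the fixed De Giorgi exponent one would obtain from odd reflection (which moreover need not preserve the structure of $\widehat A$) or from treating $w$ via its frozen linear equation. The delicate point is matching the barrier to $|w|$ on the curved part $\partial B_\rho(x_0)\cap B^+$: there one works at scales small enough that the $C^1$-modulus renders the configuration essentially flat — comparing over the slightly larger region $B_{2\rho}(x_0)\cap\Omega$ with the crude outer bound $|w|\le M(2\rho)$ — or one adds to $\phi_\rho$ a radial supersolution large on the outer sphere yet still small on $B_{\theta\rho}(x_0)$, absorbing the resulting excess into the constant for the finitely many scales $\rho$ not already small. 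Once the barrier is in hand, the iteration yielding the bound on $M(\rho)$, together with the interior interpolation above and the covering of $B(0,r)\cap\partial\Omega$, gives $\eqref{pri:5.3}$ routinely.
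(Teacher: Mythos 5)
Your argument is a genuinely different route from the paper's, and the difference is instructive to spell out.  The paper proves $\eqref{pri:5.3}$ in two quick steps: for $F=0$ it quotes the boundary $W^{1,p}$ estimate of Byun--Wang \cite[Theorem 3.8]{BW} (valid for all $2\le p<\infty$ on $C^1$ domains), extends it to $F\neq 0$ by the same real-variable perturbation argument used in Theorems $\ref{thm:4.1}$ and $\ref{thm:5.3}$ (estimate $\eqref{f:5.6}$), and then deduces the H\"older bound from the Morrey embedding $W^{1,p}\hookrightarrow C^{0,1-d/p}$ together with Caccioppoli's inequality.  You instead propose a direct barrier--iteration argument for the monotone operator $\mathcal{L}_0$ itself.

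The barrier step, which you rightly call ``the heart of the matter,'' has a genuine gap.  The comparison must be made on $B_\rho(x_0)\cap\Omega$, and the barrier must dominate $|w|$ on \emph{all} of $\partial(B_\rho(x_0)\cap\Omega)$: zero on $B_\rho\cap\partial\Omega$, and $M(\rho)$ on $\partial B_\rho\cap\Omega$.  But $\partial B_\rho\cap\Omega$ extends all the way down to $\partial\Omega$, and a priori the only control on $w$ there is $|w|\le M(\rho)$; so the candidate $c\,[d_E(x)]^\beta$, which vanishes on $\partial\Omega$, cannot dominate $w$ on the part of $\partial B_\rho\cap\Omega$ near the boundary.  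This is not fixable with a ``radial supersolution large on the outer sphere'': a radial function $\psi(|x-x_0|)$ that is small on $B_{\theta\rho}$ and of size $M(\rho)$ on $\partial B_\rho$ is necessarily increasing, hence its full Hessian has a positive angular part and is not negative semidefinite, so it is not concave and not a supersolution of $\mathcal{L}_0$ in the only sense you have available (namely $D^2\phi\le 0$ together with $\nabla_\xi\widehat A\ge\mu_0 I$).  And for a genuinely nonlinear monotone $\mathcal{L}_0$ you cannot simply add a non-concave supersolution to a concave one.  The other plausible fix---using the De Giorgi boundary estimate to know $|w(x)|\lesssim M(\rho)\big(\mathrm{dist}(x,\partial\Omega)/\rho\big)^{\alpha_0}$ near $\partial\Omega$ and matching with $c[d_E]^\beta$---only closes when $\beta\le\alpha_0$, which is precisely the fixed De Giorgi exponent you are trying to beat; the bootstrap to larger $\beta$ does not go through with this barrier.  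In short, producing a barrier that yields an \emph{arbitrary} $\beta<1$ decay at a merely $C^1$ boundary, for a nonlinear monotone operator, is exactly as hard as the result itself, and the only clean way out (which you yourself note parenthetically: ``simply quote classical boundary regularity for quasilinear equations on $C^1$ domains'') is to appeal to the $W^{1,p}$ theory of Byun--Wang and Sobolev embedding---at which point you are reproducing the paper's proof.

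Two minor additional points.  First, your initial reduction introduces an $x$-dependent $\tilde A(x,\nabla w)$ after flattening, whereas $\widehat A$ is $x$-independent; this is harmless for your plan, but it means that the useful identity ``concave $\Rightarrow$ supersolution'' must be rechecked after the change of variables, since the flattened coefficients are only continuous in $x$.  Second, your interior reduction is sound: differentiating $\mathcal{L}_0 w = F$ and applying De Giorgi--Nash to the linearized equation with coefficients $\nabla_\xi\widehat A(\nabla w)$ (which by $\eqref{a:4}$ and $\eqref{pri:2.3}$ are bounded and uniformly elliptic) is the same observation the paper uses in Lemma $\ref{lemma:4.2}$, and the interior/boundary interpolation you sketch is standard.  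The gap is entirely in the boundary decay step.
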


\begin{proof}
In the case of $F=0$, by \cite[Theorem 3.8]{BW}, for any $2\leq p<\infty$
it is not hard to derive
\begin{equation*}
\Big(\dashint_{B(0,r)\cap\Omega}|\nabla w|^p\Big)^{1/p}
\leq C_p\Big(\dashint_{B(0,2r)\cap\Omega}|\nabla w|^2\Big)^{1/2}.
\end{equation*}
Then the case of $F\not=0$ will follows from a real method as we did
for Theorems $\ref{thm:4.1}$ and $\ref{thm:5.3}$, and therefore no proof
will be given for the following estimate
\begin{equation}\label{f:5.6}
\Big(\dashint_{B(0,r)\cap\Omega}|\nabla w|^p\Big)^{1/p}
\leq C_p\bigg\{\Big(\dashint_{B(0,2r)\cap\Omega}|\nabla w|^2\Big)^{1/2}
+r\Big(\dashint_{B(0,2r)\cap\Omega}|F|^p\Big)^{1/p}\bigg\}.
\end{equation}
Hence, it follows from the Sobolev theorem that
\begin{equation*}
\begin{aligned}
\big[w\big]_{C^{0,\beta}(B(0,r)\cap\Omega)}
&\leq C\|\nabla w\|_{L^p(B(0,r)\cap\Omega)}
= Cr^{\frac{d}{p}}\Big(\dashint_{B(0,r)\cap\Omega} |\nabla w|^p\Big)^{1/p}\\
&\leq Cr^{\frac{d}{p}}
\bigg\{\Big(\dashint_{B(0,3r/2)\cap\Omega} |\nabla w|^2\Big)^{1/2}
+ r\Big(\dashint_{B(0,3r/2)\cap\Omega}|F|^p\Big)^{1/p}\bigg\}\\
&\leq Cr^{-\beta}\bigg\{\Big(\dashint_{B(0,2r)\cap\Omega} |w|^2\Big)^{1/2}
+r^2\big\|F\big\|_{L^\infty(B(0,2r\cap\Omega))}\bigg\}
\end{aligned}
\end{equation*}
where $\beta = 1-d/p$ for any $p>d$, and we use the $W^{1,p}$ estimate
$\eqref{f:5.6}$ in the second inequality, and Caccioppoli's inequality $\eqref{pri:5.1}$
in the last step.
The proof is complete.
\end{proof}

\begin{thm}
Let $\Omega$ be a bounded $C^1$ domain. Suppose that
$\mathcal{L}_\varepsilon$ satisfies the conditions $\eqref{a:1}$ and $\eqref{a:2}$.
Let $u_\varepsilon$ be a weak solution of
$L_\varepsilon u_\varepsilon = F$ in $\Omega\cap B(0,2)$
and $u_\varepsilon = 0$ on $B(0,2)\cap\partial\Omega$,
where $F\in L^\infty(\Omega)$. Then for any $\alpha\in(0,1)$
and $\varepsilon\leq r< R\leq(1/2)$, we have
\begin{equation}\label{pri:5.4}
 \Big(\dashint_{B(0,r)\cap\Omega} |\nabla u_\varepsilon|^2\Big)^{1/2}
 \leq C_{\alpha}\Big(\frac{r}{R}\Big)^{\alpha-1}
 \bigg\{
 \Big(\dashint_{B(0,R)\cap\Omega}|\nabla u_\varepsilon|^2\Big)^{1/2}
 + R\big\|F\big\|_{L^\infty(B(0,R)\cap\Omega)}\bigg\}.
\end{equation}
Moreover, if we additionally assume that $\mathcal{L}_\varepsilon$ satisfies
the smoothness condition $\eqref{a:3}$ and $F=0$,
then there holds
\begin{equation}\label{pri:5.5}
\big[u_\varepsilon\big]_{C^{0,\alpha}(B(0,r/2)\cap\Omega)}
\leq C_\alpha\Big(\dashint_{B(0,r)\cap\Omega}|u_\varepsilon|^2\Big)^{1/2}
\end{equation}
for any $0<r<(1/2)$,
where $C_\beta$ depends on $\mu_0,\mu_2,d,\alpha$ and the character of $\Omega$.
\end{thm}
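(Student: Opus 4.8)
Write $D_t=B(0,t)\cap\Omega$, $\Delta_t=B(0,t)\cap\partial\Omega$, $G_0=\|F\|_{L^\infty(D_{1/2})}$, and set
\[
\phi(t)=\frac1t\Big(\dashint_{D_t}|u_\varepsilon|^2\Big)^{1/2}.
\]
Since $u_\varepsilon$ vanishes on $\Delta_{1/2}$, the boundary Caccioppoli inequality $\eqref{pri:5.1}$ together with the Poincar\'e inequality on $D_t$ (valid with an $R$-independent constant because of the zero trace on $\Delta_t$) shows that $\eqref{pri:5.4}$ will follow once we prove the growth bound
\[
\phi(r)\le C_\alpha\Big(\frac rR\Big)^{\alpha-1}\big\{\phi(R)+RG_0\big\},\qquad \varepsilon\le r<R\le\tfrac12.
\]
The plan is to establish this by a boundary Campanato iteration built on two already established ingredients: the approximating Lemma $\ref{lemma:5.1}$, which compares $u_\varepsilon$ with a solution $w$ of $\mathcal L_0w=F$ having the same boundary data at rate $(\varepsilon/r)^\sigma$ ($\sigma=1/2-1/p>0$), and the boundary H\"older estimate Lemma $\ref{lemma:5.2}$ for $w$, available for \emph{every} exponent $\beta<1$ because $\Omega$ is $C^1$. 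The passage from $\eqref{pri:5.4}$ to $\eqref{pri:5.5}$ is then a Campanato‑embedding argument.

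\emph{One-step estimate.} Fix $\alpha\in(0,1)$ and choose an auxiliary exponent $\beta\in(\alpha,1)$. For $\varepsilon\le r\le\tfrac18$ let $w$ be the function of Lemma $\ref{lemma:5.1}$ at scale $\sim r$: it solves $\mathcal L_0w=F$ in $D_{r_0}$ for some $r_0\in[3r/2,2r]$, equals $u_\varepsilon$ on $\partial D_{r_0}$, hence vanishes on $\Delta_{r_0}$, so in particular $w(0)=0$. Applying Lemma $\ref{lemma:5.2}$ to $w$ at scale $\sim r/2$ (on nested balls inside $D_{r_0}$), bounding $(\dashint_{D_r}|w|^2)^{1/2}$ by $(\dashint_{D_r}|u_\varepsilon|^2)^{1/2}$ plus the approximation error, and using $w(0)=0$ to get $\sup_{D_{\theta r}}|w|\le(\theta r)^\beta[w]_{C^{0,\beta}}$, we reach — after the triangle inequality $\phi(\theta r)\le\frac1{\theta r}(\dashint_{D_{\theta r}}|u_\varepsilon-w|^2)^{1/2}+\frac1{\theta r}(\dashint_{D_{\theta r}}|w|^2)^{1/2}$, elementary volume comparisons, and the doubling property $\phi(r)\le C\phi(2r)$ — a recursion of the form
\[
\phi(\theta r)\le C_1\,\theta^{\beta-1}\big\{\phi(2r)+rG_0\big\}+C_\theta\Big(\frac\varepsilon r\Big)^{\sigma}\big\{\phi(2r)+rG_0\big\},\qquad \varepsilon\le r\le\tfrac18 .
\]
Now choose $\theta\in(0,\tfrac12)$ so small that $C_1\theta^{\beta-1}\le\tfrac12\theta^{\alpha-1}$; this is possible precisely because $\beta>\alpha$, and fixes $\theta=\theta(\alpha,d,\mu_0,\mu_2,\text{character of }\Omega)$.

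\emph{Iteration and conclusion of $\eqref{pri:5.4}$.} Put $r_k=\theta^kR$ and $a_k=\phi(2r_k)+r_kG_0$; the recursion gives $\phi(r_{k+1})\le(\tfrac12\theta^{\alpha-1}+C_\theta(\varepsilon/r_k)^\sigma)a_k$, and $a_{k+1}$ is estimated by $a_k$ in the same way. Renormalizing by $b_k=a_k(r_k/R)^{1-\alpha}$ turns this into $b_{k+1}\le(\tfrac12+o_\theta(1)+\delta_k)b_k$ with $\delta_k=C'_\theta(\varepsilon/r_k)^\sigma$; since $r_k$ descends from $R$ to a fixed multiple of $\varepsilon$ in finitely many steps, $\delta_k>\tfrac18$ only for a bounded number of indices near the bottom, so $b_k\le Cb_0$ for all $k$, i.e. $\phi(2r_k)\le C(r_k/R)^{\alpha-1}\{\phi(2R)+RG_0\}$. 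Interpolating over $r\in[r_{k+1},r_k]$ by doubling yields the growth bound for $\phi$, hence $\eqref{pri:5.4}$ for $R\le\tfrac18$; for $\tfrac18<R\le\tfrac12$ the estimate at scale $\tfrac18$ combined with $\eqref{pri:5.1}$ absorbs the difference into the constant.

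\emph{The H\"older estimate $\eqref{pri:5.5}$.} Assume now in addition $\eqref{a:3}$ and $F=0$. Then $\eqref{pri:5.4}$ — applied also at boundary points near $0$ by translation, and combined with the interior Lipschitz estimate $\eqref{pri:1.6}$ in the bulk — gives, after a standard covering argument, the Morrey-type growth $\rho^{2-d}\int_{B(x_0,\rho)\cap\Omega}|\nabla u_\varepsilon|^2\le C\rho^{2\alpha}\,\Xi$ for all $x_0\in D_{r/4}$ and $\varepsilon\le\rho\le r/4$, with $\Xi$ controlled by $\dashint_{D_r}|u_\varepsilon|^2$ via $\eqref{pri:5.1}$ and Poincar\'e. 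For $\rho<\varepsilon$ the coefficient $A(\cdot/\varepsilon,\cdot)$ has spatial oscillation at most $\mu_1(\rho/\varepsilon)^\tau<\mu_1$ on $B(x_0,\rho)$, so the classical boundary estimate \cite[Theorem 3.8]{BW} (already invoked in Section~5) extends the growth across this range with no loss. Because $u_\varepsilon=0$ on $B(0,r)\cap\partial\Omega$, Campanato's characterization of H\"older spaces (the variant allowing vanishing trace on a boundary portion) converts this uniform growth into $u_\varepsilon\in C^{0,\alpha}(D_{r/2})$ with the bound $\eqref{pri:5.5}$. The one genuinely delicate point in all of this is the Campanato iteration leading to $\eqref{pri:5.4}$: one must extract the \emph{sub-optimal} exponent $\alpha-1$ (a true boundary Lipschitz bound fails, since neither $\eqref{a:3}$ nor $C^{1,\eta}$-regularity of $\Omega$ is assumed) by tuning $\theta$ against the auxiliary exponent $\beta\in(\alpha,1)$, and one must close the iteration all the way down to scale $\varepsilon$, where the approximation error $(\varepsilon/r)^\sigma$ is no longer small; everything else is bookkeeping on top of Lemmas $\ref{lemma:5.1}$, $\ref{lemma:5.2}$ and $\ref{lemma:5.3}$.
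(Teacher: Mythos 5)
Your proposal is correct and follows essentially the same route as the paper: both rest on the one-step comparison furnished by the approximating Lemma \ref{lemma:5.1} and the sub-optimal boundary H\"older estimate of Lemma \ref{lemma:5.2} (for an auxiliary exponent $\beta>\alpha$), tune the dyadic ratio $\theta$ against $\theta^{\beta-\alpha}$, iterate a Campanato-type quantity down to scale $\varepsilon$, and finish with Caccioppoli plus Poincar\'e; the final H\"older bound \eqref{pri:5.5} is again obtained exactly as in the paper, by splicing the large-scale estimate \eqref{pri:5.4} with a small-scale estimate from \cite[Theorem 3.8]{BW} under \eqref{a:3}. The only cosmetic difference is how the bottom of the iteration is closed: the paper first restricts to $r\geq N\varepsilon$ and then integrates the difference inequality $dr/r$, whereas you keep a renormalized discrete sequence $b_k$ and observe that the error factor $(\varepsilon/r_k)^{\sigma}$ exceeds a fixed threshold for only a bounded number of indices; both arguments yield the same conclusion with constants independent of $\varepsilon$.
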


\begin{proof}
The main idea may be found in \cite[Theorem 5.2]{S5}, which actually could be extended to
the nonhomogeneous cases, and we provide a proof for the sake of the completeness.
To do so, we set $\alpha\in (0,1)$, and
\begin{equation*}
H(r,\sigma,v) = r^{-\alpha}\bigg\{\Big(\dashint_{B(0,r)\cap\Omega}|v|^2 dx\Big)^{\frac{1}{2}}
+ r^2\|F\|_{L^\infty(B(0,r)\cap\Omega)}
\bigg\}.
\end{equation*}
For each $r\in[\varepsilon,1/2]$,
let $u_0 = w$ be the function given in Lemma $\ref{lemma:5.1}$. Then
for any $\alpha<\beta<1$,
it follows from the boundary H\"older estimate $\eqref{pri:5.3}$ that
\begin{equation*}
\big[w\big]_{C^{0,\beta}(D_{r/2})}
\leq CH(r,\beta,w).
\end{equation*}
For any $\theta\in(0,1/4)$, the above estimate gives
\begin{equation}\label{f:5.3}
H(\theta r,\alpha,w)
\leq C\theta^{\beta-\alpha} H(r,\alpha,w).
\end{equation}

Then we find that
\begin{equation}
\begin{aligned}
H(\theta r,\alpha,u_\varepsilon)
&\leq (\theta r)^{-\alpha}\Big(\dashint_{B(0,\theta r)\cap\Omega}
|u_\varepsilon - w|^2 dx\Big)^{\frac{1}{2}}
+ H(\theta r,\alpha,w) \\
&\leq C\theta^{-\alpha-\frac{d}{2}}(\varepsilon/r)^{\sigma}
H(2r,\alpha,u_\varepsilon)
+ C\theta^{\beta-\alpha} H(r,\alpha,w)\\
&\leq \max\big\{C\theta^{-\alpha-\frac{d}{2}}(\varepsilon/r)^{\sigma}
+C_1\theta^{\beta-\alpha}(\varepsilon/r)^{\sigma}
+C_2\theta^{\beta-\alpha}\big\}H(2r,\alpha,u_\varepsilon),
\end{aligned}
\end{equation}
where $\sigma\in(0,1)$ is given in Lemma $\ref{lemma:5.1}$,
and we use the estimates $\eqref{pri:5.2}$ and $\eqref{f:5.3}$
in the second step.
Now, we first fix $\theta\in(0,1/4)$ such that $C_2\theta^{\beta-\alpha}\leq 1/4$, and
then let $r\geq N\varepsilon$. By choosing $N>1$ large, we also obtain
\begin{equation*}
\max\big\{C\theta^{-\alpha-\frac{d}{2}}(\varepsilon/r)^{\sigma},
~C_1\theta^{\beta-\alpha}(\varepsilon/r)^{\sigma}\big\}\leq 1/4.
\end{equation*}
Hence,
\begin{equation}
H(\theta r,\alpha,u_\varepsilon)\leq \frac{1}{4} H(2r,\alpha,u_\varepsilon).
\end{equation}
Moreover, multiplying $r^{-1}$ on the both sides of $\eqref{f:4.6}$ and then integrating
with respect to $r$ from $N\varepsilon$ to $R/2$, we have
\begin{equation*}
\int_{\theta N\varepsilon}^{\frac{\theta R}{2}}H(r,\alpha,u_\varepsilon)\frac{dr}{r}
\leq \frac{1}{4} \int_{2N\varepsilon}^{R}H(r,\alpha,u_\varepsilon)\frac{dr}{r}
\end{equation*}
and this implies that
\begin{equation*}
\int_{\theta N\varepsilon}^{R}H(s,\alpha,u_\varepsilon)\frac{ds}{s}
\leq \frac{4}{3} \int_{\frac{\theta R}{2}}^{R}H(s,\alpha,u_\varepsilon)\frac{ds}{s}
\leq CH(R,\sigma,u_\varepsilon).
\end{equation*}
Thus we deduce from the above estimate that
$H(r,\alpha,u_\varepsilon)\leq CH(R,\sigma,u_\varepsilon)$
for any $\varepsilon\leq r< R/2$.
Then it is clear to see that
\begin{equation*}
\Big(\dashint_{B(0,r)\cap\Omega}|\nabla u_\varepsilon|^2 dx\Big)^{\frac{1}{2}}
\leq Cr^{\alpha-1}H(2r,\alpha,u_\varepsilon)\leq
Cr^{\alpha-1}H_0(R,\alpha,u_\varepsilon)
\end{equation*}
where we use Caccioppoli's inequality $\eqref{pri:5.1}$ in the first step.
The desired estimate $\eqref{pri:5.4}$ consequently follows
from Poincar\'e's inequality.

In terms of $\eqref{pri:5.5}$, we just give a remark here. The assumption
$\eqref{a:3}$ in fact verified the so-called $(\delta,R)$-vanishing condition,
that means we may derive the related H\"older estimate
in small scales. As in the proof of Lemma $\ref{lemma:5.2}$,
it follows from \cite[Theorem 3.8]{BW} that
\begin{equation*}
 \big[u_\varepsilon\big]_{C^{0,\alpha}(B(0,\varepsilon)\cap\Omega)}
 \leq C\varepsilon^{1-\alpha}\Big(\dashint_{B(0,2\varepsilon)\cap\Omega}
 |\nabla u_\varepsilon|^2\Big)^{1/2},
\end{equation*}
and the remainder of the proof is as the same as in \cite[Corollary 5.2]{S5}.
We just end the proof here.
\end{proof}

\begin{lemma}
Let $2\leq p<\infty$. Suppose that $\mathcal{L}_\varepsilon$ satisfies
the assumptions $\eqref{a:1}$, $\eqref{a:2}$ and $\eqref{a:3}$. Let $u_\varepsilon$ be
a weak solution of $L_\varepsilon u_\varepsilon = 0$ in $B(0,1)\cap\Omega$ with
$u_\varepsilon = 0$ on $B(0,1)\cap\partial\Omega$.
Then there holds
\begin{equation}\label{pri:5.6}
\Big(\dashint_{B(0,r)\cap\Omega}|\nabla u_\varepsilon|^p\Big)^{1/p}
\leq C_p\Big(\dashint_{B(0,2r)\cap\Omega}|\nabla u_\varepsilon|^2\Big)^{1/2}
\end{equation}
for any $0<r<(1/2)$,
where $C_p$ depends on $\mu_0, \mu_1, \mu_2, d, p$ and the character of $\Omega$.
\end{lemma}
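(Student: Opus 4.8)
The plan is to run the same two-scale scheme as for the interior reverse H\"older estimate \eqref{f:4.10}: below scale $\varepsilon$ the coefficients of $\mathcal{L}_\varepsilon$ are almost frozen, so classical $W^{1,p}$ theory applies, while above scale $\varepsilon$ one feeds in the large-scale regularity, here the boundary H\"older estimate \eqref{pri:5.4}. The only structural novelty compared with the interior case is that a $C^1$ domain supplies only a boundary \emph{H\"older} bound rather than a Lipschitz one, so $\nabla u_\varepsilon$ may blow up mildly at $\partial\Omega$; this is harmless because \eqref{pri:5.4} is available for \emph{every} $\alpha\in(0,1)$, and the proof begins by fixing $\alpha=\alpha(p)$ with $p(1-\alpha)<1$ (say $\alpha=1-\tfrac1{2p}$) --- it is this choice that forces $C_p$ to depend on $p$.

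\textbf{Small scales.} By \eqref{a:3} the coefficient $A(x/\varepsilon,\cdot)$ satisfies a $(\delta_0,c_1\varepsilon)$-vanishing (small-BMO-in-$x$) condition with $c_1$ depending only on $\mu_1,\tau$ and $\delta_0$; hence, by the boundary and interior $W^{1,p}$ estimates for equations with such coefficients on a $C^1$ domain (\cite[Theorem 3.8]{BW} and its interior analogue), for every $p<\infty$, every $z\in\overline\Omega$ and every $\rho\le c_1\varepsilon$ one has
\[
\Big(\dashint_{B(z,\rho)\cap\Omega}|\nabla u_\varepsilon|^p\Big)^{1/p}\le C_p\Big(\dashint_{B(z,2\rho)\cap\Omega}|\nabla u_\varepsilon|^2\Big)^{1/2}
\]
with $C_p$ independent of $\varepsilon$. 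For $0<r\le c_1\varepsilon$ this is already \eqref{pri:5.6}. For $r>c_1\varepsilon$ I would also use \eqref{pri:5.4} (with $F=0$) localized at an arbitrary boundary point; since its proof is assembled from the local Lemmas \ref{lemma:5.1} and \ref{lemma:5.2}, it holds at any $x_0\in\partial\Omega$ and at all scales the equation permits.

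\textbf{The covering argument.} Fix $c_1\varepsilon<r<\tfrac12$ and set $d(y)=\mathrm{dist}(y,\partial\Omega)$. The key step is the pointwise bound: for every $y\in B(0,r)\cap\Omega$,
\[
\Big(\dashint_{B(y,c\varepsilon)\cap\Omega}|\nabla u_\varepsilon|^p\Big)^{1/p}\le C_p\Big(\tfrac{\max\{d(y),\varepsilon\}}{r}\Big)^{\alpha-1}\Big(\dashint_{B(0,2r)\cap\Omega}|\nabla u_\varepsilon|^2\Big)^{1/2},
\]
with $c$ a small fixed fraction of $c_1$. When $d(y)\lesssim\varepsilon$ this comes from the small-scale estimate at the nearest boundary point followed by \eqref{pri:5.4} at that point at scale $\sim r$, which turns the $L^2$-average on a $\sim\varepsilon$-ball into the right-hand side; when $\varepsilon\lesssim d(y)\lesssim r$ one instead rescales \eqref{f:4.10} on $B(y,d(y))$ for the $L^p\!\to\!L^2$ step and then applies \eqref{pri:5.4} on a ball of radius $\sim d(y)$ about the nearest boundary point (a plain volume comparison when $d(y)\sim r$). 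Covering $B(0,r)\cap\Omega$ by boundedly overlapping balls $\{B(y_k,c\varepsilon)\}$, grouping them into dyadic layers $\{d\sim 2^j\varepsilon\}$ (each containing $\sim r^{d-1}2^j\varepsilon^{1-d}$ balls), raising to the $p$-th power and using $\sum a_k^{p/2}\le(\sum a_k)^{p/2}$, the $j$-th layer should contribute $\sim r^d(2^j\varepsilon/r)^{\,1-p(1-\alpha)}(\dashint_{B(0,2r)\cap\Omega}|\nabla u_\varepsilon|^2)^{p/2}$; since $1-p(1-\alpha)>0$ by the choice of $\alpha$, the series in $j$ converges and $\int_{B(0,r)\cap\Omega}|\nabla u_\varepsilon|^p\le C_p\,r^d(\dashint_{B(0,2r)\cap\Omega}|\nabla u_\varepsilon|^2)^{p/2}$, which is \eqref{pri:5.6}.

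\textbf{Main obstacle.} The routine parts are the radius bookkeeping in the covering and the volume comparisons (using $|B(x,\rho)\cap\Omega|\ge c_0\rho^d$ for a $C^1$ domain). The genuinely delicate point is the regime $d(y)\sim\varepsilon$: there the coefficients are already too rough for an $\varepsilon$-independent local $W^{1,p}$ bound with $p$ beyond the Gehring exponent $p_0$ of \eqref{pri:2.16}, yet the ball $B(y,d(y))$ is still too small to invoke \eqref{f:4.10}. The way out is that \eqref{a:3} makes \emph{all} balls of radius $\le c_1\varepsilon$ --- interior as well as boundary --- fall under the $(\delta_0,c_1\varepsilon)$-vanishing regime, so \cite[Theorem 3.8]{BW} still applies there; arranging the cases so that every ball in the cover inherits the displayed pointwise bound, and checking that $1-p(1-\alpha)>0$ (which dictates how $C_p$ depends on $p$ through $\alpha$), is the whole content of the argument, the rest paralleling the proofs of Theorem \ref{thm:4.1} and Lemma \ref{lemma:5.2}.
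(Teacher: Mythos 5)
Your proposal is in the same two-scale spirit as the paper's proof — both rest on small-scale $W^{1,p}$ from the Byun--Wang theory, the large-scale boundary H\"older estimate \eqref{pri:5.4}, and the convergence condition $p(1-\alpha)<1$ (which you correctly identify as the source of the $p$-dependence of $C_p$) — but the decomposition/summation mechanism is genuinely different. The paper does not cover $D(0,r)$ by balls of fixed size $\sim\varepsilon$. Instead it works with the continuously-parameterized \emph{Whitney balls} $B(y,\delta(y)/8)$, $y\in D(0,r)$: Theorem~\ref{thm:4.1} (interior $W^{1,p}$) is applied at the matched scale $\delta(y)/8\to\delta(y)/4$, then a volume comparison takes $B(y,\delta(y)/4)$ into $D(y',2\delta(y))$, then \eqref{pri:5.4} brings in the factor $(\delta(y)/r)^{\alpha-1}$. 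Integrating over $y\in D(0,r)$ gives $r^{d-1}\int_0^r t^{-p(1-\alpha)}dt$, which converges for $\alpha>1-1/p$, and the final step is the Fubini inequality $c_0\int_{D(0,r)}|\nabla u_\varepsilon|^p\le\int_{D(0,r)}\dashint_{B(y,\delta(y)/8)}|\nabla u_\varepsilon|^p\,dy$, proved via a Whitney cube decomposition. Your dyadic-layer count with $\varepsilon$-balls and $\sum_j r^{d}(2^j\varepsilon/r)^{1-p(1-\alpha)}$ is the discrete analogue of exactly this integral, so the arithmetic is parallel.

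The advantage of the paper's Whitney parametrization, which your version loses, is that $B(y,\delta(y)/8)$ is \emph{always an interior ball}: there is no case split between $d(y)\lesssim\varepsilon$ and $d(y)\gtrsim\varepsilon$, no need for a separate boundary small-scale $W^{1,p}$ estimate, and the local $L^p\!\to\!L^2$ step is a direct application of Theorem~\ref{thm:4.1} at the $r\to 2r$ scale that the theorem is actually stated for. By contrast your pointwise bound on the fixed $\varepsilon$-ball, in the regime $\varepsilon\lesssim d(y)\lesssim r$, is not fully justified as written: ``rescaling \eqref{f:4.10} on $B(y,d(y))$'' produces $\big(\dashint_{B(y,d(y)/8)}|\nabla u_\varepsilon|^p\big)^{1/p}\le C\big(\dashint_{B(y,d(y)/4)}|\nabla u_\varepsilon|^2\big)^{1/2}$, \emph{not} the $L^p$-average over $B(y,c\varepsilon)$, and a smaller average is not dominated by a larger one. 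To close this you would still need $\dashint_{B(y,2c\varepsilon)}|\nabla u_\varepsilon|^2\le C\,\dashint_{B(y,d(y)/4)}|\nabla u_\varepsilon|^2$, i.e.\ a genuine large-scale interior Lipschitz bound from scale $\varepsilon$ up to scale $d(y)$. In this paper that bound (the rescaled form of \eqref{pri:1.1}) is only stated for scales $\ge\sqrt[3]{\varepsilon}$ of the rescaled problem, so there is a gap of scales left to be filled (the authors themselves only remark that the $\sqrt[3]{\varepsilon}$ threshold can be lowered to $\varepsilon$). The Whitney formulation of the paper quietly avoids this issue; your formulation has to confront it. If you adopt the Whitney balls $B(y,\delta(y)/8)$ and the Fubini/Whitney-cube lemma instead of the fixed-$\varepsilon$ cover, the rest of your calculation (layer count $\leftrightarrow$ $\int_0^r t^{-p(1-\alpha)}dt$, the choice $\alpha=1-\tfrac{1}{2p}$) goes through unchanged.
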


\begin{proof}
The main idea may be found in \cite[Lemma 5.4]{S5}, and we provide a proof for the sake of
the completeness. First of all, it follows from
the estimates $\eqref{pri:4.6}$ and $\eqref{pri:5.4}$ that
\begin{equation*}
\begin{aligned}
\Big(\dashint_{B(y,\delta(y)/8)}|\nabla u_\varepsilon|^p dx\Big)^{\frac{1}{p}}
&\leq C\Big(\dashint_{B(y,\delta(y)/4)}|\nabla u_\varepsilon|^2 dx\Big)^{\frac{1}{2}}\\
&\leq C\Big(\dashint_{D(y^\prime,2\delta(y))}|\nabla u_\varepsilon|^2 dx\Big)^{\frac{1}{2}}
 \leq C\Big(\frac{\delta(y)}{r}\Big)^{\alpha-1}\Big(\dashint_{D(0,r)}|\nabla u_\varepsilon|^2 dx\Big)^{\frac{1}{2}},
\end{aligned}
\end{equation*}
for any $y\in D(0,r)$, where $y^\prime\in\Delta(0,2r)$
is such that $\delta(y)=|y-y^\prime|$.
Integrating both sides with respect to $y$ in $D(0,r)$, we then have
\begin{equation*}
\int_{D(0,r)} \dashint_{B(y,\delta(y)/8)}|\nabla u_\varepsilon|^p dxdy
 \leq Cr^{d-1+(1-\alpha)p}
\int_0^r \frac{dt}{t^{(1-\alpha)p}}
\Big(\dashint_{D(0,2r)}|\nabla u_\varepsilon|^2\Big)^{\frac{p}{2}}
\leq Cr^{d-\frac{pd}{2}}\|\nabla u_\varepsilon\|_{L^2(D(0,2r))}^p,
\end{equation*}
whenever $\alpha\in(1-\frac{1}{p},1)$. We claim that
\begin{equation}\label{f:5.2}
 c_0\int_{D(0,r)}|\nabla u_\varepsilon|^p dx
 \leq \int_{D(0,r)} \dashint_{B(y,\delta(y)/8)}|\nabla u_\varepsilon|^p dxdy,
\end{equation}
and this implies the desired estimate $\eqref{pri:5.6}$. In order to prove $\eqref{f:5.2}$,
decomposing $D(0,r)$ as a non-overlapping union of cubes $\{Q\}$ by the Whitney decomposition theorem,
which satisfy the property that $3Q\subset D(0,2r)$, and the side length of $Q$ and
$\text{dist}(Q,\partial\Omega)$ are comparable. Observing that
\begin{equation*}
\begin{aligned}
\int_{Q} \dashint_{B(y,\delta(y)/8)}|\nabla u_\varepsilon|^p dxdy
&\geq c_0\dashint_Q \int_{B(y,l(Q)/8)}|\nabla u_\varepsilon|^p dx dy \\
&= c_0\dashint_Q |\nabla u_\varepsilon|^p\int_{B(y,l(Q)/8)} dy dx
\geq c_0\int_Q|\nabla u_\varepsilon|^p dx,
\end{aligned}
\end{equation*}
where $c_0$ depends only on $d$. This gives the estimate $\eqref{f:5.2}$, and we are done.
\end{proof}

\begin{thm}[A real method]\label{thm:5.2}
Let $q>2$ and $D_0\subset\Omega$ be a bounded Lipschitz domain. Let $F\in L^2(\Omega)$ and
$f\in L^p(\Omega)$ for some $2<p<q$. Suppose that for each ball $B$ with
the property that $|B|\leq c_0|D_0|$ and either $4B\subset\Omega$ or
$B$ is centered on $\partial\Omega$, there exist two measurable functions
$F_B$ and $R_B$ on $\Omega\cap 2B$, such that $|F|\leq |F_B|+ |R_B|$
on $\Omega\cap 2B$,
\begin{equation}\label{pri:5.8}
\begin{aligned}
\Big(\dashint_{\Omega\cap 2B}|R_B|^q\Big)^{1/q}
&\leq N_1 \bigg\{
\Big(\dashint_{\Omega\cap 4B}|F|^2\Big)^{1/2}
+\sup_{4D_0\supset B^\prime \supset B}
\Big(\dashint_{\Omega\cap B^\prime}|f|^2\Big)^{1/2}\bigg\},\\
\Big(\dashint_{\Omega\cap 2B}|F_B|^2\Big)^{1/2}
&\leq N_2 \sup_{4D_0\supset B^\prime \supset B}
\Big(\dashint_{\Omega\cap B^\prime}|f|^2\Big)^{1/2}
+\eta \Big(\dashint_{\Omega\cap 4B}|F|^2\Big)^{1/2},
\end{aligned}
\end{equation}
where $N_1,N_2>0$ and $0<c_0<1$. Then there exists
$\eta_0>0$, depending only on $N_1,N_2,c_0,p,q$ and the Lipschitz character of
$\Omega$, with the property that if $0\leq\eta < \eta_0$, then $F\in L^p_{loc}(\Omega)$
and
\begin{equation}\label{pri:5.9}
\Big(\dashint_{D_0}|F|^p\Big)^{1/p}
\leq C\bigg\{\Big(\dashint_{4D_0}|F|^2\Big)^{1/2}
+ \Big(\dashint_{4D_0}|f|^p\Big)^{1/p}\bigg\},
\end{equation}
where $C$ depends at most on
$N_1,N_2,c_0,p,q$ and the Lipschitz character of $\Omega$.
\end{thm}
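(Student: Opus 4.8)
The plan is to invoke Z.~Shen's real-variable (Calder\'on--Zygmund) scheme from \cite{S3,S4}. After a translation and dilation we may assume $\mathrm{diam}(D_0)\simeq 1$, and, replacing $F$ and $f$ by $F/\gamma$ and $f/\gamma$ with $\gamma=\big(\dashint_{4D_0}|F|^2\big)^{1/2}+\big(\dashint_{4D_0}|f|^p\big)^{1/p}$, we may assume $\dashint_{4D_0}|F|^2\le 1$ and $\dashint_{4D_0}|f|^p\le 1$; the goal becomes $\dashint_{D_0}|F|^p\le C$. Let $\mathcal{M}$ denote the Hardy--Littlewood maximal operator localized to $4D_0$, with averages over $B'\cap\Omega$ (legitimate since $|B'\cap\Omega|\simeq|B'|$ for the Lipschitz domain $\Omega$), and put $E(\lambda)=\{x\in D_0:\mathcal{M}(|F|^2)(x)>\lambda\}$. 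Since $|F|^2\le\mathcal{M}(|F|^2)$ a.e.\ and $|f|^2\in L^{p/2}$ with $p/2>1$, so that $\mathcal{M}$ is bounded on $L^{p/2}$, it suffices to prove a good-$\lambda$ inequality
\begin{equation*}
\big|E(\Lambda\lambda)\big|\le \kappa(\Lambda)\,\big|E(\lambda)\big|+\big|\{x\in D_0:\mathcal{M}(|f|^2)(x)>c_1\lambda\}\big|,\qquad \lambda\ge\lambda_0,
\end{equation*}
with $\kappa(\Lambda)\,\Lambda^{p/2}<\tfrac12$: multiplying by $\lambda^{p/2-1}$ and integrating over $[0,T]$, estimating $|E(\lambda)|\le|D_0|$ on $[0,\lambda_0]$, absorbing the $E$-term, and letting $T\to\infty$ then yields $\|\mathcal{M}(|F|^2)\|_{L^{p/2}(D_0)}\le C$, hence $\dashint_{D_0}|F|^p\le C$ and, on general subdomains, $F\in L^p_{\mathrm{loc}}(\Omega)$.

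To obtain the good-$\lambda$ inequality, fix $\lambda\ge\lambda_0$ with $\lambda_0$ so large that $\dashint_{4D_0}|F|^2<\lambda_0$, and carry out a Calder\'on--Zygmund stopping-time / Vitali selection: extract a bounded-overlap family $\{B_k\}$ covering $E(\Lambda\lambda)$, each $B_k$ of one of the admissible types ($4B_k\subset\Omega$, or $B_k$ centered on $\partial\Omega$), with $\dashint_{B_k\cap\Omega}|F|^2>\lambda$, the stopping bound $\dashint_{4B_k\cap\Omega}|F|^2\le C_0\lambda$, and $\sum_k|B_k|\le C|E(\lambda)|$. On $B_k$, split $\mathcal{M}(|F|^2)$ into the parts coming from $|F|^2\chi_{2B_k\cap\Omega}$ and from $|F|^2\chi_{(2B_k)^c}$; the latter is $\le C\lambda$ on $B_k$ by the stopping bound, so for $\Lambda$ large $E(\Lambda\lambda)\cap B_k\subset\{x\in B_k:\mathcal{M}(|F|^2\chi_{2B_k\cap\Omega})(x)>\Lambda\lambda\}$. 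Using the hypothesis $|F|\le|F_{B_k}|+|R_{B_k}|$ on $2B_k\cap\Omega$, apply the weak $(1,1)$ bound for $\mathcal{M}$ to $|F_{B_k}|^2$ with the second estimate in \eqref{pri:5.8}, and the weak $(q/2,q/2)$ bound for $\mathcal{M}$ to $|R_{B_k}|^2$ with the first estimate in \eqref{pri:5.8} (here $q/2>1$ is used), controlling throughout $\dashint_{4B_k\cap\Omega}|F|^2\le C_0\lambda$ and noting that $\sup_{B_k\subset B'\subset 4D_0}\dashint_{B'\cap\Omega}|f|^2\le C\inf_{x\in B_k}\mathcal{M}(|f|^2)(x)$, so this supremum is $\le Cc_1\lambda$ for every $B_k$ not entirely contained in $\{\mathcal{M}(|f|^2)>c_1\lambda\}$. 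Summing in $k$ gives
\begin{equation*}
\big|E(\Lambda\lambda)\big|\le C\Big(\frac{\eta^{2}+N_2^{2}c_1}{\Lambda}+\frac{N_1^{q}(1+c_1^{q/2})}{\Lambda^{q/2}}\Big)\,\big|E(\lambda)\big|+\big|\{x\in D_0:\mathcal{M}(|f|^2)(x)>c_1\lambda\}\big|,
\end{equation*}
the balls entirely inside $\{\mathcal{M}(|f|^2)>c_1\lambda\}$ being accounted for by the last term using bounded overlap.

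The parameters are fixed in this order: because $q>p$, first choose $\Lambda$ so large (depending on $N_1,p,q,d$ and the character of $\Omega$) that $CN_1^{q}\cdot 2\cdot\Lambda^{p/2-q/2}<\tfrac14$; with $\Lambda$ frozen, choose $c_1\in(0,1]$ and then $\eta_0$ so small (depending on $N_2,\Lambda,c_1$, hence on $N_1,N_2,p,q$ and the character) that $C(\eta_0^{2}+N_2^{2}c_1)\,\Lambda^{p/2-1}<\tfrac14$; this forces $\kappa(\Lambda)\Lambda^{p/2}<\tfrac12$ whenever $0\le\eta<\eta_0$, and the integration described in the first paragraph closes, giving \eqref{pri:5.9} after undoing the normalization. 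The one genuinely delicate step is the geometry of the selection: each selected ball must be arranged to be admissible (interior or boundary-centered) \emph{while} retaining $|B_k\cap\Omega|\simeq|B_k|$, the stopping control $\dashint_{4B_k\cap\Omega}|F|^2\le C_0\lambda$, and finite overlap---this is where the Lipschitz character of $\Omega$ enters and where re-centering or enlarging a ball that meets $\partial\Omega$ must be done without spoiling the averages. Everything past the construction of this covering is the standard good-$\lambda$ bootstrap from $L^2$ to $L^p$.
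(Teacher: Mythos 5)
Your proposal is a correct reconstruction of the Calder\'on--Zygmund/good-$\lambda$ argument behind Shen's real-variable method, which is exactly what the paper invokes: the paper itself offers no independent proof and simply cites \cite[Theorem 3.2.6]{S4} (originally \cite{S3}), whose proof proceeds by the same stopping-time covering, the splitting of $\mathcal{M}(|F|^2)$ via $|F|\le|F_B|+|R_B|$ with weak $(1,1)$ and weak $(q/2,q/2)$ bounds, and the parameter bookkeeping (large $\Lambda$, then small $c_1$ and $\eta_0$) that you describe. The only step you flag but do not fully carry out---re-centering/enlarging the selected balls to be admissible while preserving the Vitali overlap and stopping bounds on a Lipschitz domain---is indeed the genuinely technical part of Shen's proof, and you identify it correctly.
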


\begin{proof}
See for example \cite[Theorem 3.2.6]{S4}.
This result was originally given by Z. Shen in \cite{S3}.
\end{proof}

\begin{thm}[boundary $W^{1,p}$ estimates]\label{thm:5.3}
Let $2\leq p<\infty$. Suppose that
$\mathcal{L}_\varepsilon$ satisfies
the assumptions $\eqref{a:1}$, $\eqref{a:2}$ and $\eqref{a:3}$.
Let $u_\varepsilon$ be
a weak solution of $L_\varepsilon u_\varepsilon = \emph{div}(f)+ F$
in $B(0,1)\cap\Omega$ with
$u_\varepsilon = g$ on $B(0,1)\cap\partial\Omega$,
where $f\in L^p(\Omega;\mathbb{R}^d)$, $F\in L^p(\Omega)$ and
$g\in W^{1-1/p,p}(\partial\Omega)$.
Then there holds
\begin{equation}\label{pri:5.7}
\begin{aligned}
\Big(\dashint_{D(0,r)}|\nabla u_\varepsilon|^p\Big)^{1/p}
\leq
& C_p
\Bigg\{\Big(\dashint_{D(0,4r)}|\nabla u_\varepsilon|^2\Big)^{1/2}
+\Big(\dashint_{D(0,4r)}|f|^p\Big)^{1/p} \\
&+r\Big(\dashint_{D(0,4r)}|F|^p\Big)^{1/p}
+ \Big(\dashint_{D(0,4r)}|\nabla \tilde{g}|^p\Big)^{1/p}\Bigg\}
\end{aligned}
\end{equation}
for any $0<r<(1/4)$,
where $C_p$ depends on $\mu_0, \mu_1, \mu_2, d, p$ and the character of $\Omega$.
\end{thm}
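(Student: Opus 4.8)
The plan is to deduce \eqref{pri:5.7} from the real-variable argument of Theorem~\ref{thm:5.2}, feeding it the reverse H\"older estimate \eqref{pri:5.6} for the homogeneous operator with zero boundary data together with its interior counterpart \eqref{f:4.10}. We may assume $0\in\partial\Omega$, since otherwise $B(0,1)\subset\Omega$ and the conclusion is Theorem~\ref{thm:4.1}; we may also assume $p>2$, the case $p=2$ being the $H^1$ estimate of Theorem~\ref{thm:2.1}. First I would remove the boundary datum. Let $\tilde g$ be an extension of $g$ to $\mathbb{R}^d$ with $\|\tilde g\|_{W^{1,p}(\mathbb{R}^d)}\le C\|g\|_{W^{1-1/p,p}(\partial\Omega)}$ and set $w=u_\varepsilon-\tilde g$, so that $w=0$ on $B(0,1)\cap\partial\Omega$. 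Writing $A(x/\varepsilon,\nabla w)=A(x/\varepsilon,\nabla w+\nabla\tilde g)-\big[A(x/\varepsilon,\nabla w+\nabla\tilde g)-A(x/\varepsilon,\nabla w)\big]$ and invoking the Lipschitz bound in \eqref{a:2}, one checks that $w$ solves $\mathcal{L}_\varepsilon w=\text{div}(\hat f)+F$ in $B(0,1)\cap\Omega$ with $\hat f=f+\big[A(x/\varepsilon,\nabla w+\nabla\tilde g)-A(x/\varepsilon,\nabla w)\big]$, which obeys the pointwise bound $|\hat f|\le|f|+\mu_2|\nabla\tilde g|$. Since $|\nabla u_\varepsilon|\le|\nabla w|+|\nabla\tilde g|$ and the right-hand side of \eqref{pri:5.7} already contains $\|\nabla\tilde g\|_{L^p}$, it suffices to prove \eqref{pri:5.7} for $w$, with $g=0$ and $f$ replaced by $\hat f$.

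Next I would apply Theorem~\ref{thm:5.2} with $D_0=D(0,r)=B(0,r)\cap\Omega$, which is a bounded Lipschitz domain for $0<r<1/4$, taking the function ``$F$'' of that theorem to be $|\nabla w|$ and its datum to be $|\hat f|$ augmented by a term absorbing the zeroth-order forcing $F$ exactly as in the proof of Theorem~\ref{thm:4.1}, that is, through a weight supported near $B(0,3r)$ (on a ball $B(y_0,\rho)$ one replaces $\rho|F|$ by $|F(\cdot)|\,\text{dist}(\cdot,B(y_0,4\rho)\setminus B(y_0,3\rho))$). For the prescribed $p$, fix any $q>p$; the reverse H\"older estimates \eqref{f:4.10} and \eqref{pri:5.6} are available with this exponent $q$. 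For each admissible ball $B$ I would introduce the $\mathcal{L}_\varepsilon$-harmonic comparison $v_\varepsilon$: if $4B\subset\Omega$, let $\mathcal{L}_\varepsilon v_\varepsilon=0$ in $2B$ with $v_\varepsilon=w$ on $\partial(2B)$; if $B$ is centered on $\partial\Omega$, let $\mathcal{L}_\varepsilon v_\varepsilon=0$ in $2B\cap\Omega$ with $v_\varepsilon=w$ on $\partial(2B)\cap\Omega$ and (since $w=0$ there) $v_\varepsilon=0$ on $2B\cap\partial\Omega$. Testing the equation for $v_\varepsilon$ with $v_\varepsilon-w\in H^1_0(2B\cap\Omega)$ and using \eqref{a:1}, \eqref{a:2} gives $\int_{2B\cap\Omega}|\nabla v_\varepsilon|^2\le C\int_{2B\cap\Omega}|\nabla w|^2$, while subtracting the equations for $w$ and $v_\varepsilon$, testing with $w-v_\varepsilon$ and using Poincar\'e's inequality yields the energy comparison $\int_{2B\cap\Omega}|\nabla w-\nabla v_\varepsilon|^2\le C\int_{2B\cap\Omega}\big(|\hat f|^2+\rho^2|F|^2\big)$, exactly as in the proof of Theorem~\ref{thm:2.1}.

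Now set $R_B=|\nabla v_\varepsilon|$ and $F_B=|\nabla w-\nabla v_\varepsilon|$, so that $|\nabla w|\le|F_B|+|R_B|$ on $2B\cap\Omega$. The second comparison estimate is precisely the smallness condition (the second inequality of \eqref{pri:5.8}) with $\eta=0$. For the other one, apply \eqref{f:4.10} when $4B\subset\Omega$ and \eqref{pri:5.6} when $B$ is centered on $\partial\Omega$ — in the latter case $v_\varepsilon$ solves the homogeneous equation with zero data on the flat portion, so the lemma applies — to obtain $\big(\dashint_{B\cap\Omega}|R_B|^q\big)^{1/q}\le C\big(\dashint_{2B\cap\Omega}|\nabla v_\varepsilon|^2\big)^{1/2}\le C\big(\dashint_{4B\cap\Omega}|\nabla w|^2\big)^{1/2}$, which is the first inequality of \eqref{pri:5.8}. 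Theorem~\ref{thm:5.2} then delivers
\[
\Big(\dashint_{D(0,r)}|\nabla w|^p\Big)^{1/p}\le C\Big\{\Big(\dashint_{D(0,4r)}|\nabla w|^2\Big)^{1/2}+\Big(\dashint_{D(0,4r)}\big(|f|+\mu_2|\nabla\tilde g|+r|F|\big)^p\Big)^{1/p}\Big\},
\]
where the $r|F|$ contribution records the distance weight; undoing $w=u_\varepsilon-\tilde g$, using $|\nabla u_\varepsilon|\le|\nabla w|+|\nabla\tilde g|$ and $|\hat f|\le|f|+\mu_2|\nabla\tilde g|$, and passing from the $L^2$ to the $L^p$ average of $\nabla\tilde g$ by Jensen's inequality, we arrive at \eqref{pri:5.7}.

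The main obstacle is the verification of the first inequality of \eqref{pri:5.8} at balls centered on $\partial\Omega$, i.e.\ the availability of the boundary reverse H\"older inequality \eqref{pri:5.6}; this rests on the large-scale boundary H\"older estimate \eqref{pri:5.4} (itself built from the convergence rate \eqref{pri:3.4} and the boundary approximating Lemma~\ref{lemma:5.1}) patched to the small-scale $W^{1,p}$ bound of \cite[Theorem~3.8]{BW} through a Whitney-type covering. A secondary, genuinely nonlinear, subtlety is that the modified datum $\hat f$ depends on $\nabla w$; this is harmless because $\hat f$ enters every bound only through the fixed majorant $|f|+\mu_2|\nabla\tilde g|$. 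Finally, since no rescaling in $\varepsilon$ is available, the radii in the comparison arguments must be tracked explicitly, which is routine given the scale-invariant form of \eqref{f:4.10} and \eqref{pri:5.6}.
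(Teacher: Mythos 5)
Your proof is correct and essentially the same as the paper's: both apply the real method of Theorem~\ref{thm:5.2} with the comparison function $v_\varepsilon$ solving the homogeneous equation with zero data on the boundary portion, use the reverse H\"older estimates \eqref{f:4.10} and \eqref{pri:5.6} to verify the first condition in \eqref{pri:5.8}, and obtain the second condition (with $\eta=0$) from the energy comparison. The only difference is bookkeeping: you subtract $\tilde g$ at the outset and apply the real method to $|\nabla w|$ with the modified datum $\hat f$, whereas the paper keeps $u_\varepsilon$, sets $F_B=\nabla u_\varepsilon-\nabla v_\varepsilon$, and folds the $\tilde g$ contribution into the test function $\varphi=u_\varepsilon-v_\varepsilon-\tilde g$ and the majorant $\tilde f$; both are correct and yield the same estimate.
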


\begin{proof}
In terms of Theorem $\ref{thm:5.2}$, it suffices to verified the condition $\eqref{pri:5.8}$.
To do so, here
let $R_B = \nabla v_\varepsilon$, $F_B = \nabla u_\varepsilon - \nabla v_\varepsilon$,
and $\Omega(x,2r) = \Omega\cap B(x,2r)$ with $x\in D(0,r/2)$,  where
$v_\varepsilon\in H^1(\Omega(x,2r))$ satisfies
\begin{equation}\label{pde:5.1}
 \mathcal{L}_\varepsilon v_\varepsilon = 0 \quad\text{in~} \Omega(x,2r)
 \qquad v_\varepsilon = u_\varepsilon - \tilde{g}
 \quad\text{on~} \partial\Omega(x,2r).
\end{equation}
Thus, it follows from the estimate $\eqref{pri:5.6}$ that
\begin{equation}\label{f:5.5}
\begin{aligned}
\Big(\dashint_{\Omega(x,r)}|\nabla v_\varepsilon|^p\Big)^{1/p}
&\leq C_p\Big(\dashint_{\Omega(x,2r)}|\nabla v_\varepsilon|^2\Big)^{1/2} \\
&\leq C_p\bigg\{\Big(\dashint_{\Omega(x,2r)}|\nabla u_\varepsilon|^2\Big)^{1/2}
+ \Big(\dashint_{\Omega(x,2r)}|\nabla u_\varepsilon - \nabla v_\varepsilon|^2\Big)^{1/2}\bigg\}
\end{aligned}
\end{equation}
On the other hand, in view of $\eqref{pde:5.1}$ we may have
\begin{equation}\label{pde:5.2}
\int_{\Omega(x,2r)} \big[A(x/\varepsilon,\nabla u_\varepsilon)
-A(x/\varepsilon,\nabla v_\varepsilon)\big]\cdot\nabla\varphi
= -\int_{\Omega(x,2r)}f\cdot\nabla\varphi dx + \int_{\Omega(x,2r)} F\varphi dx
\end{equation}
for any $\varphi\in H_0^1(\Omega(x,2r))$.
By setting $\varphi = u_\varepsilon - v_\varepsilon - \tilde{g}$ in the above equation, we may
have
\begin{equation*}
\text{LHS~of~}\eqref{pde:5.2}
\geq \mu_0\int_{\Omega(x,2r)}|\nabla u_\varepsilon - \nabla v_\varepsilon|^2 dz
- \mu_2\int_{\Omega(x,2r)}|\nabla u_\varepsilon - \nabla v_\varepsilon||\nabla\tilde{g}|dz,
\end{equation*}
where we use the assumptions $\eqref{a:1},\eqref{a:2}$ and $\eqref{a:3}$,
and
\begin{equation*}
\text{RHS~of~}\eqref{pde:5.2}
\leq \Big(\frac{\mu_0}{4}+\frac{\mu_0}{8}\Big)\int_{\Omega(x,2r)}
|\nabla u_\varepsilon - \nabla v_\varepsilon|^2 dz
+ C\int_{\Omega(x,2r)}\Big( |f|^2 + r^2|F|^2 + |\nabla\tilde{g}|^2 \Big)dz,
\end{equation*}
where we also employ Poincar\'e's inequality and Young's inequality. Collecting the above estimates
leads to
\begin{equation}\label{f:5.4}
 \frac{\mu_0}{8}\int_{\Omega(x,2r)}|\nabla u_\varepsilon - \nabla v_\varepsilon|^2 dz
 \leq C(\mu_0,\mu_2,d)\int_{\Omega(x,2r)} |\tilde{f}|^2 dz
\end{equation}
where
$\tilde{f}(z) = |f(z)| + |F(z)|\text{dist}(z,M_{3r}^{4r}(x)) + |\nabla\tilde{g}(z)|$, which
is similar to that in $\eqref{def:2.4}$. Thus,
\begin{equation*}
\Big(\dashint_{\Omega(x,r)}|F_B|^2\Big)^{1/2}
\leq N_2(\mu_0,\mu_2,d)\Big(\dashint_{\Omega(x,2r)}|\tilde{f}|^2\Big)^{1/2}
\end{equation*}
and combining $\eqref{f:5.5}$ and $\eqref{f:5.4}$ gives
\begin{equation*}
\Big(\dashint_{\Omega(x,r)}|R_B|^p\Big)^{1/p}
\leq N_1(\mu_0,\mu_2,d)\bigg\{
\Big(\dashint_{\Omega(x,2r)}|\nabla u_\varepsilon|^2\Big)^{1/2}
+ \Big(\dashint_{\Omega(x,2r)}|\tilde{f}|^2\Big)^{1/2}\bigg\}.
\end{equation*}

Consequently, the stated estimate
$\eqref{pri:5.7}$ follows from Theorem $\ref{thm:5.2}$ and we have
completed the proof.
\end{proof}

\noindent\textbf{Proof of Theorem $\ref{thm:1.3}$}.
For any $x\in\overline{\Omega}$ and some $r>0$ we set
$\Omega(x,r)=B(x,r)\cap\Omega$.
In view of Theorems $\ref{thm:4.1}$ and $\ref{pri:5.3}$, one may have
\begin{equation*}
\begin{aligned}
\Big(\int_{\Omega(x,r)}|\nabla u_\varepsilon|^p dz\Big)^{1/p}
\leq
& C_p
\Bigg\{r^{\frac{d}{p}-\frac{d}{2}}\Big(\int_{\Omega(x,4r)}
|\nabla u_\varepsilon|^2dz\Big)^{1/2}
+\Big(\int_{\Omega(x,4r)}|f|^p\Big)^{1/p} \\
&+r\Big(\int_{\Omega(x,4r)}|F|^p\Big)^{1/p}
+ \Big(\int_{\Omega(x,4r)}|\nabla \tilde{g}|^p\Big)^{1/p}\Bigg\}.
\end{aligned}
\end{equation*}
By a covering argument, we further obtain
\begin{equation*}
\begin{aligned}
\|\nabla u_\varepsilon\|_{L^p(\Omega)}
&\leq C\bigg\{\|\nabla u_\varepsilon\|_{L^2(\Omega)}
+\|F\|_{L^p(\Omega)}+\|f\|_{L^p(\Omega)}+\|\nabla \tilde{g}\|_{L^p(\Omega)}\bigg\}\\
&\leq  C\bigg\{\|F\|_{L^p(\Omega)}+\|f\|_{L^p(\Omega)}
+\|g\|_{W^{1-1/p,p}(\partial\Omega)}\bigg\},
\end{aligned}
\end{equation*}
where we use the estimate $\eqref{pri:2.12}$ and the fact
$\|g\|_{H^{1/2}(\partial\Omega)}\leq C\|g\|_{W^{1-1/p,p}(\partial\Omega)}$
in the second inequality. Note that $\tilde{g}$ is an extension of $g$,
satisfying $\|\tilde{g}\|_{W^{1,p}(\Omega)}\leq
C\|g\|_{W^{1-1/p,p}(\partial\Omega)}$. We have completed the proof.
\qed

\begin{center}
\textbf{Acknowledgements}
\end{center}

The authors thank Prof. Zhongwei Shen for suggesting this topic
when he visited Lanzhou University last summer. The second author
wants to express his sincere appreciation to
Prof. Zhongwei Shen for his constant and illuminating instruction.
The first author and the last author was supported by the National Natural Science Foundation of China
(Grant NO. 11471147).
The second author was supported by the China Postdoctoral Science Foundation (Grant No. 2017M620490).


\begin{thebibliography}{000}
\bibitem{AM}S. Armstrong, J. Mourrat, Lipschitz regularity for elliptic equations with random coefficients,
Arch. Ration. Mech. Anal. 219(2016), no.1, 255-348.
\bibitem{ASC}
S. Armstrong, C. Smart, Quantitative stochastic homogenization of convex integral functionals,
Ann. Sci. \'Ec. Norm. Sup\'er. (4)49 (2016), no.2, 423-481.

\bibitem{ASZ}S. Armstrong, Z. Shen, Lipschitz estimates in almost-periodic homogenization,
Comm. Pure Appl. Math. 69(2016), no.10, 1882-1923.

\bibitem{A} H. Attouch, Introdcution a l'homog\'en\'eisation d'in\'equations variationnelles, Rend. Sem. Mat.
Univ. Politec. Torino, 40(1982), 1-23.

\bibitem{MAFHL} M. Avellaneda, F. Lin, Compactness methods in the theory of homogenization,
Comm. Pure Appl. Math. 40(1987), no.6, 803-847.


\bibitem{ABJLGP} A. Bensoussan, J.-L. Lions, and G.C. Papanicolaou, Asympotic Analysis for Periodic Structures, Studies in Mathematics and its Applications, North Holland, 1978.

\bibitem{SBLW} S. Byun, L. Wang, Elliptc equations with BMO coefficients in Reifenberg domians,
Comm. Pure Appl. Math. 57(2004), no.10, 1283-1310.

\bibitem{BW} S. Byun, L. Wang, $L^p$-estimates for general
nonlinear elliptic equations, Indiana Univ. Math. J. 56(2007) no.6, 3193-3221.

\bibitem{CP} L. Caffarelli, I. Peral, On $W^{1,p}$ estimates for elliptic equations in divergence form,
Comm. Pure Appl. Math. 51(1998), no.1, 1-21.

\bibitem{F} X. Fan, Global $C^{1,\alpha}$ regularity for variable exponent elliptic equations in divergence form,
J. Differential Equations 235(2007), no.2, 397-417.



\bibitem{GNF}
A. Gloria, S. Neukamm, F. Otto, Quantification of ergodicity in stochastic homogenization:
optimal bounds via spectral gap on Glauber dynamics,
Invent. Math. 199(2015), no.2, 455-515.

\bibitem{GM} M. Giquinta, Multiple Integrals in the Calculus of Variations and Nonlinear Elliptic Systems,
Princeton Univ. Press, Princeton, 1983.

\bibitem{MGLM} M. Giaquinta, L. Martinazzi, An Introduction to
the Regularity Theory for Elliptic Systems, Harmonic Maps and Minimal Graphs,
Edizioni della Normale, Pisa, 2012.

\bibitem{GJ} J. Geng, $W^{1,p}$ estiamtes for elliptic problems
with Neumann boundary conditions in Lipschitz domains, Adv. Math. 229(2012), no.4, 2427-2448.

\bibitem{GX} S. Gu, Q. Xu,
Optimal boundary estimates for Stokes systems in homogenization theory,
SIAM J. Math. Anal. 49(2017), no.5, 3831-3853.


\bibitem{VSO} V. Jikov, S. Kozlov, O. Oleinik, Homogenization of Differential Operators and Integral Functionals,
Springer-Verlag, Berlin, 1994.

\bibitem{KFS1} C. Kenig, F. Lin, Z. Shen, Homogenization of elliptic systems with Neumann boundary conditions,
J. Amer. Math. Soc. 26(2013), no.4, 901-937.

\bibitem{KFS2} C. Kenig, F. Lin, Z. Shen, Convergence rates in $L^2$ for elliptic homogenization problems,
Arch. Ration. Mech. Anal. 203(2012), no.3, 1009-1036.




\bibitem{MD} G. Maso, A. Defranceschi,
Correctors for the homogenization of monotone operators,
Differential and Integral Equations, 3(1990), no.6, 1151-1166.

\bibitem{MN} S. M\"uller, S. Neukamm, On the commutability of homogenization and linearization in
finite elasticity, Arch. Ration. Mech. Anal., 201(2011), no.2, 465-500.

\bibitem{NSX} W. Niu, Z. Shen, Y. Xu, Convergence rates and interior estimates in homogenization of higher order elliptic systems,
J. Funct. Anal. 274(2018), no.8, 2356-2398.

\bibitem{P} S. Pastukhova, Operator estimates in nonlinear problems of reiterated homogenization,
(Russian) Tr. Mat. Inst. Steklova  261(2008), Differ. Uravn. i Din. Sist., 220-233;
translation in Proc. Steklov Inst. Math.  261(2008),  no.1, 214-228.

\bibitem{S4} Z. Shen, Lectures on Periodic Homogenization of Elliptic Systems,
arXiv:1710.11257v1 (2017).


\bibitem{S5} Z. Shen, Boundary estimates in elliptic homogenization, Anal. PDE 10(2017), 653-694.

\bibitem{S1} Z. Shen, The $L^p$ boundary value problems on Lipschitz domains,
Adv. Math. 216(2007), no.1, 212-254.


\bibitem{S3} Z. Shen, Bounds of Riesz transforms on $L^p$ spaces for second order elliptic operators,
 Ann. Inst. Fourier (Grenoble) 55(2005), no.1, 173-197.

\bibitem{SZ1} Z. Shen, J. Zhuge, Approximate correctors and convergence rates
 in almost-periodic homogenization, J. Math. Pures Appl. (9) 110(2018), 187-238.

\bibitem{TS1} T. Suslina, Homogenization of the Neumann problem for elliptic systems with periodic coefficients,
SIAM J. Math. Anal. 45(2013), no.6, 3453-3493.


\bibitem{X0} Q. Xu, Uniform regularity estimates in homogenization theory of elliptic systems
with lower terms, J. Math. Anal. Appl. 438(2016), no.2, 1066-1107.


\bibitem{X3} Q. Xu, Convergence rates for general elliptic homogenization
problems in Lipschitz domains, SIAM J. Math. Anal. 48(2016), no.6, 3742-3788.


\bibitem{X4} Q. Xu, S. Zhou, $L^p$ Neumann problems in homogenization of
general elliptic systems
arXiv:1806.02632v1 (2018).

\bibitem{Z} E. Zeidler, Nonlinear Functional Analysis and its Applications (II/B),
nonlinear monotone operators. Translated from the German by the author and Leo F. Boron,
Springer-Verlag, New York, 1990.

\bibitem{ZVVPSE} V. Zhikov, S. Pastukhova, On operator estimates for some problems in homogenization theory,
Russ. J. Math. Phys. 12(2005), no.4, 515-524.

\bibitem{ZVVPSE1} V. Zhikov, S. Pastukhova,
On operator estimates in homogenization theory, (Russian) Uspekhi Mat. Nauk
71(2016), no.3(429), 27-122; translation in  Russian Math. Surveys  71(2016),  no.3, 417-511.
\end{thebibliography}
\end{document}